\DeclareMathOperator{\divv}{div}
\DeclareMathOperator{\curl}{curl}
\DeclareMathOperator{\loc}{loc}
\DeclareMathOperator{\divf}
{di\overset{\raisebox{0.1ex}{\hspace{0.1em}$\mathbf{\cdot}$}}{v}}
\begin{document}
\title{Global weak solutions and incompressible limit to the isentropic compressible Navier--Stokes equations in 2D bounded domains with ripped density and large initial data
\thanks{
Wu's research was partially supported by Fujian Alliance of Mathematics (No. 2023SXLMMS08) and the Scientific Research Funds of Xiamen University of Technology (No. YKJ25009R).
Zhong's research was partially supported by Fundamental Research Funds for the Central Universities (No. SWU--KU24001) and National Natural Science Foundation of China (No. 12371227).}
}

\author{Shuai Wang$\,^{\rm 1}\,$,\ Guochun Wu$\,^{\rm 2}\,$,\
Xin Zhong$\,^{\rm 1}\,$ {\thanks{E-mail addresses: swang238@163.com (S. Wang),
guochunwu@126.com (G. Wu), xzhong1014@amss.ac.cn (X. Zhong).}}\date{}\\
\footnotesize $^{\rm 1}\,$
School of Mathematics and Statistics, Southwest University, Chongqing 400715, P. R. China\\
\footnotesize $^{\rm 2}\,$ School of Mathematics and Statistics, Xiamen University of Technology, Xiamen 361024, P. R. China}

\maketitle
\newtheorem{theorem}{Theorem}[section]
\newtheorem{definition}{Definition}[section]
\newtheorem{lemma}{Lemma}[section]
\newtheorem{proposition}{Proposition}[section]
\newtheorem{corollary}{Corollary}[section]
\newtheorem{remark}{Remark}[section]
\renewcommand{\theequation}{\thesection.\arabic{equation}}
\catcode`@=11 \@addtoreset{equation}{section} \catcode`@=12
\maketitle{}

\begin{abstract}
This paper is a continuation of our previous work (arXiv:2507.03505), where the global existence and incompressible limit of weak solutions to the isentropic compressible Navier--Stokes equations in the half-plane with ripped density and large initial data were established. We extend such results to the case of two-dimensional bounded convex domains under a Navier-slip boundary condition. To overcome difficulties in the presence of a curved boundary, some new estimates based on the effective viscous flux and a Desjardins-type logarithmic interpolation inequality play decisive roles.
\end{abstract}

\textit{Key words and phrases}. Navier--Stokes equations; global weak solutions; incompressible limit; slip boundary conditions; large initial data; vacuum.

2020 \textit{Mathematics Subject Classification}. 35Q30; 35A01; 35B40.

\tableofcontents

\section{Introduction}
\subsection{Background and motivation}
Assume that $\Omega\subset\mathbb{R}^2$ is a bounded convex domain with smooth boundary $\partial\Omega$, we consider the following isentropic compressible Navier--Stokes equations in $\Omega\times(0,+\infty)$:
\begin{align}\label{a1}
\begin{cases}
\rho_t+\divv(\rho\mathbf{u})=0,\\
(\rho\mathbf{u})_t+\divv(\rho\mathbf{u}\otimes\mathbf{u})-\mu\Delta\mathbf{u}
-(\mu+\lambda)\nabla\divv\mathbf{u}+\nabla P
=0.
\end{cases}
\end{align}
The unknowns $\rho$, $\mathbf{u}=(u^1,u^2)$, and $P=P(\rho)=a\rho^\gamma\ (a>0,\gamma>1)$ are the fluid density, velocity, and pressure, respectively. The constants $\mu$ and $\lambda$ represent the shear viscosity and bulk viscosity of the fluid, respectively, satisfying physical restrictions
\begin{equation*}
\mu>0,\ \ \ \mu+\lambda\geq0.
\end{equation*}

We complement \eqref{a1} with the given initial data
\begin{equation}
(\rho,\mathbf{u})|_{t=0}=(\rho_0,\mathbf{u}_0)(\mathbf{x}),\ \ \mathbf{x}\in\Omega,
\end{equation}
and a Navier-slip boundary condition
\begin{equation}\label{a3}
\mathbf{u}\cdot \mathbf{n}=0,  ~~ \curl\mathbf{u}\triangleq\partial_1u^2-\partial_2u^1=0, \ \ \mathbf{x}\in\partial\Omega,\ t>0,
\end{equation}
where $\mathbf{n}=(n^1,n^2)$ is the unit outward normal vector to $\partial\Omega$.

For $t\geq0$, observe that the solutions to \eqref{a1} adhere to the global energy law
\begin{align*}
\int_{\Omega}\bigg(\frac{1}{2}\rho |\mathbf{u}|^2+G(\rho)\bigg)\mathrm{d}\mathbf{x}
+\int_0^t\int_{\Omega}\big[(2\mu+\lambda)(\divv \mathbf{u})^2+\mu(\curl\mathbf{u})^2\big]\mathrm{d}\mathbf{x}\mathrm{d}\tau
\leq C_0,
\end{align*}
with the initial total energy $C_0$ and the potential energy $G(\rho)$ being given by
\begin{equation}\label{G}
C_0\triangleq\int_{\Omega}
\bigg(\frac{1}{2}\rho_0|\mathbf{u}_0|^2+G(\rho_0)\bigg)\mathrm{d}\mathbf{x},~~
G(\rho)\triangleq\rho\int_{\bar{\rho}}^\rho\frac{P(\xi)-P(\bar{\rho})}{\xi^2}\mathrm{d}\xi,
\end{equation}
where the average of density over $\Omega$, due to the mass equation \eqref{a1}$_1$, is indeed a nonnegative constant
\begin{equation*}
  \bar{\rho}\triangleq\frac{1}{|\Omega|}\int_\Omega \rho\mathrm{d}\mathbf{x}=\frac{1}{|\Omega|}\int_\Omega \rho_0\mathrm{d}\mathbf{x}.
\end{equation*}
Hence, the incompressibility can be recovered formally from the global energy law as $\lambda\rightarrow\infty$.

Significant progress has been made over the past four decades regarding the global existence of solutions for the multi-dimensional isentropic compressible Navier--Stokes equations. In the 1980s, Matsumura and Nishida \cite{MN80,MN83} established the global well-posedness of classical solutions for 3D initial-boundary value problem when the initial data differ only slightly from the equilibrium values (constants). Applying harmonic analysis techniques, especially the Littlewood-Paley theory, Danchin \cite{Da00} demonstrated the global existence and uniqueness of strong solutions in the framework of critical Besov space $\dot{B}^{\frac{n}{2}-1}_{2,1}(\mathbb{R}^n)\ (n\geq2)$. Later on, Charve and Danchin \cite{CD10}, and independently by Chen--Miao--Zhang \cite{CCZ10}, extended Danchin's work \cite{Da00} to $\dot{B}^{\frac{n}{p}-1}_{p,1} (\mathbb{R}^n)\ (p\in[2,2n),n\geq2)$, which is also a critical framework but is not related to the energy space (see also \cite{H11} for a related work). It should be emphasized that all results stated above do not allow the presence of vacuum states (i.e., the density may vanish in some open sets).
There are also some interesting results concerning global regular solutions with non-vacuum for a class of large initial data, please refer to \cite{DM17,FZZ18,HHW19,ZLZ20} for further details.

However, as pointed out by many authors (see, e.g., \cite{DM23,HLX12,M1,M2}), the possible presence of vacuum is one of the major difficulties in the study of mathematical theory of compressible fluids due to the high singularity and degeneracy of the system \eqref{a1} near the vacuum region. The major breakthrough on the global solutions with vacuum is due to P.-L. Lions \cite{PL98}, where he employed the renormalization skills to establish global weak solutions in $\mathbb{R}^n$ for $\gamma\geq\frac{3n}{n+2}\ (n=2,3)$. Feireisl--Novotn\'y--Petzeltov\'a \cite{EF01} later extended Lions' result to the case $\gamma>\frac{n}{2}$ by introducing oscillation defect measure, while $\gamma=1$ for the 2D Dirichlet problem was obtained in \cite{PW15}. Moreover, Jiang and Zhang \cite{JZ01,JZ03} addressed global weak solutions in three dimensions for any $\gamma>1$ when the initial data are assumed to have some spherically symmetric or axisymmetric properties. A key issue in \cite{JZ01,JZ03} is to apply properties of the effective viscous flux to obtain extra information about integrability of the density. Nonetheless, due to the possible concentration of finite kinetic energy in very small domains \cite{H21}, it still seems to be a challenge to show the global existence of weak solutions with general 3D initial data for $\gamma \in (1,\frac{3}{2}]$. Furthermore, Bresch and Jabin \cite{BJ18} investigated the global existence of weak solutions to the compressible Navier--Stokes equations with general viscous stress tensor. In particular, no monotonicity assumptions on the pressure law are required in \cite{BJ18}. In spite of the enormous amount of effort, the question of global regularity and uniqueness for such weak solutions remains completely open.

Meanwhile, several results are devoted to investigating the global well-posedness of strong (or classical) solutions to the isentropic compressible Navier--Stokes equations with vacuum. Huang--Li--Xin \cite{HLX12} established the global existence and uniqueness of classical solutions for system \eqref{a1} in $\mathbb{R}^3$ with smooth initial data which are of small total energy but possibly large oscillations, where the far-field density could be either vacuum or non-vacuum. In \cite{HHPZ24}, the initial total energy could be large when the adiabatic exponent $\gamma$ is sufficiently close to $1$. Using some key \textit{a priori} decay with rates (in large time) and a spatially weighted energy method, Li and Xin \cite{LX19} dealt with the case of the whole plane with vacuum at infinity provided that the initial total energy is suitably small. Recently, the Cauchy problem of \cite{HLX12} was extended to the Navier-slip boundary problem in \cite{CAI23}. It should be noticed that a central aspect of their studies in \cite{CAI23,HLX12,LX19,HHPZ24} is the derivation of both the time-independent upper bound of the density and time-dependent higher-order estimates of smooth solutions.

Apart from {\it large-energy weak solutions} \cite{PL98,EF01} and {\it small-energy classical solutions} \cite{HLX12,LX19}, the third type of solutions to \eqref{a1} is the so-called {\it Hoff's intermediate weak
solutions}. More precisely, Hoff \cite{Hoff95,Hoff95*,Hoff02,Hoff05} proved the global existence of such solutions under certain additional assumptions on the initial data. Indeed, the regularity of weak solutions constructed by Hoff \cite{Hoff95,Hoff95*,Hoff02,Hoff05} is stronger than Lions--Feireisl's solutions in the sense that particle path can be defined in the non-vacuum region, but weaker than the usual strong solutions in the sense that discontinuity of the density can be transported along the particle path. Based on some delicate estimates of the effective viscous flux via Green's function, Perepelitsa \cite{P3} proved the global existence of Hoff's weak solutions in the half-space $\mathbb{R}^n_+\ (n=2,3)$ with no-slip boundary conditions when the initial data are close to a static equilibrium. A further progress was obtained by Hu--Wu--Zhong \cite{HWZ}, where the global existence of weak solutions in \cite{Hoff95,Hoff95*} was improved to the case of bounded non-negative initial density that are of small initial energy but possibly large oscillations. In particular, such solutions converge globally in time to a global weak solution of the inhomogeneous incompressible Navier--Stokes equations as the bulk viscosity tends to infinity. For more studies on the compressible Navier--Stokes equations, one may refer to the excellent handbook \cite{GN18} and references contained therein.

It is certainly interesting to investigate the global existence of regular solutions for multi-dimensional compressible Navier--Stokes equations with {\it large initial data and vacuum}, which is important physically and mathematically. A significant contribution in this regards was made by Danchin and Mucha \cite{DM23}, where the global existence of large solutions with vacuum of \eqref{a1} in $\mathbb{T}^2$ was derived provided the bulk viscosity coefficient is properly large. As a by-product, they also gave a rigorous justification of the convergence to the inhomogeneous incompressible Navier--Stokes equations when the bulk viscosity tends to infinity.
More recently, on the basis of some ideas from \cite{HWZ} and an adaptation of the Desjardins-type logarithmic interpolation inequality in \cite{D1997}, the authors of the present paper \cite{WWZ} extended the results in \cite{HWZ} to the half-plane under a slip boundary condition. It should be emphasized that the absence or flatness of boundary plays a crucial role in \cite{DM23,HWZ,WWZ}. Naturally, one would like to ask: is it possible to establish the global solutions with large initial data and vacuum in a region subject to a {\it curved boundary}?

The primary objective of the current paper is to provide an affirmative answer to this issue. More precisely, we shall address the initial-boundary value problem \eqref{a1}--\eqref{a3} involving general arbitrary large initial data with merely nonnegative bounded density: we recall that a ``ripped'' initial density is a function that may have nontrivial regions of vacuum, without any extra regularity assumption (see \cite{DM23}). Furthermore, we show the limiting behavior of such solutions as $\lambda\rightarrow\infty$ to a solution of the following inhomogeneous incompressible Navier--Stokes equations in $\Omega\times(0,+\infty)$:
\begin{align}\label{a5}
\begin{cases}
\varrho_t+{\bf v}\cdot\nabla\varrho=0,\\
\varrho{\bf v}_t+\varrho{\bf v}\cdot\nabla {\bf v}+\nabla \Pi-\mu\Delta {\bf v}=0,\\
\divv{\bf v}=0,\\
(\varrho,{\bf v})\big |_{t=0}=(\rho_0,{\bf v}_0),
\end{cases}
\end{align}
where ${\bf v}_0$ is the Leray-Helmholtz projection of ${\bf u}_0$ on divergence-free vector fields.

\subsection{Main results}
Before stating our main results, we first formulate the notations and conventions used throughout this paper. We denote by $C$ a generic positive constant which may vary at different places. The symbol $\Box$ denotes the end of a proof and $a\triangleq b$ means $a=b$ by definition. For $1\le p\le \infty$ and integer $k\ge 0$, we denote the standard Sobolev spaces as follows:
\begin{align*}
L^p=L^p(\Omega),\ \ W^{k, p}=W^{k, p}(\Omega),\ \ H^k=W^{k, 2}, \ \
H_\omega^k=\{\mathbf{f}\in H^k:(\mathbf{f}\cdot \mathbf{n})|_{\partial\Omega}=\curl \mathbf{f}|_{\partial\Omega}=0\}.
\end{align*}
For any $f\in L^1_{\loc}(\Omega)$, we define its mollification by $[f]_\epsilon\triangleq j_\epsilon*f$, where $j_\epsilon=j_\epsilon({\bf x})$ is the standard mollifier with width $\epsilon$.
For two $n\times n$ matrices $A=\{a_{ij}\}$ and $B=\{b_{ij}\}$, the symbol $A: B$ represents the trace
of $AB$, that is,
\begin{equation*}
A:B\triangleq\operatorname{tr}(AB)=\sum_{i,j=1}^na_{ij}b_{ji}.
\end{equation*}
In addition, for $\alpha\in (0,1]$, the H\" older seminorm of a function ${\bf v}:U\subseteq\overline{\Omega}\rightarrow \mathbb R^2$ is defined by
\begin{align*}
\langle {\bf v}\rangle^\alpha_{U}
=\sup\limits_{\substack{{\bf x}, {\bf y}\in U\\ {\bf x}\neq {\bf y}}}
\frac{|{\bf v}({\bf x})-{\bf v}({\bf y})|}{|{\bf x}-{\bf y}|^\alpha}.
\end{align*}

Moreover, we write
\begin{align*}
\int f \mathrm{d}\mathbf{x}=\int_{\Omega} f \mathrm{d}\mathbf{x}, ~~\bar{f}\triangleq\fint f\mathrm{d}\mathbf{x}=\frac{1}{|\Omega|}\int_\Omega f\mathrm{d}\mathbf{x}.
\end{align*}
The material derivative of $f$ and the transpose gradient are given respectively by
\begin{equation*}
\dot{f}\triangleq f_t+\mathbf{u}\cdot\nabla f,
~~\nabla^\bot \triangleq(\partial_2,-\partial_1),
\end{equation*}
and the effective viscous flux $F$ is denoted by
\begin{equation}\label{z1.4}
F\triangleq(2\mu+\lambda)\divv\mathbf{u}-(P-\bar{P}).
\end{equation}
Finally, we introduce the Leray projector
\begin{equation*}\label{1.8}
\mathcal{P}\triangleq \text{Id}+\nabla(-\Delta)^{-1}\divv,
\end{equation*}
which projects onto the subspace of divergence-free vector fields inheriting the no-penetration boundary condition, along with its complement $\mathcal{Q}\triangleq \text{Id}-\mathcal{P}$. Both $\mathcal{P}$ and $\mathcal{Q}$ are bounded operators on $L^p$ for any $1<p<\infty$.

We recall the definition of weak solutions to the problem \eqref{a1}--\eqref{a3} in the sense of \cite{Hoff05,NS04}.
\begin{definition}\label{d1.1}
A pair $(\rho, \mathbf{u})$ is said to be a weak solution to the initial-boundary value problem \eqref{a1}--\eqref{a3} provided that
\begin{equation*}
  \rho\in C([0,\infty);H^{-1}(\Omega)),\ \
  \rho\mathbf{u}\in C([0,\infty);\widetilde{H}^{1}(\Omega)^*),\ \
  \nabla\mathbf{u}\in L^2(\Omega\times(0,\infty))
\end{equation*}
with $(\rho,\mathbf{u})|_{t=0}=(\rho_0,\mathbf{u}_0)$, where $\widetilde{H}^{1}(\Omega)^*$ is the dual of $\widetilde{H}^{1}(\Omega)=\{\mathbf{f}\in H^1:(\mathbf{f}\cdot \mathbf{n})|_{\partial\Omega}=0\}$. Moreover,
for any $t_2\geq t_1\geq 0$ and any test function $(\phi,\boldsymbol\psi)(\mathbf{x},t)\in C^1\big(\overline{\Omega}\times[t_1,t_2]\big)$, with uniformly bounded
support in $\mathbf{x}$ for $t\in[t_1,t_2]$ and satisfying $(\boldsymbol\psi\cdot\mathbf{n})|_{\partial\Omega}=0$, the following identities hold\footnote{Throughout this paper, we will use the Einstein summation over repeated indices convention.}:
\begin{align*}
\int_{\Omega}\rho(\mathbf{x},\cdot)\phi(\mathbf{x},\cdot)
\mathrm{d}\mathbf{x}\Big|_{t_1}^{t_2}
&=\int_{t_1}^{t_2}\int_{\Omega}(\rho\phi_t+
\rho\mathbf{u}\cdot\nabla\phi)\mathrm{d}\mathbf{x}\mathrm{d}t,\\
\int_{\Omega}(\rho\mathbf{u}\cdot\boldsymbol{\psi})
(\mathbf{x},\cdot)\mathrm{d}\mathbf{x}\Big|_{t_1}^{t_2}
&=\int_{t_1}^{t_2}
\int_{\Omega}\big(\rho\mathbf{u}\cdot\boldsymbol{\psi}_t+
\rho u^i\mathbf{u}\cdot\partial_i\boldsymbol{\psi}+P\divv\boldsymbol{\psi}\big)
\mathrm{d}\mathbf{x}\mathrm{d}t\notag\\
&\quad -\int_{t_1}^{t_2}
\int_{\Omega}\big((2\mu+\lambda)\divv\mathbf{u}\divv\boldsymbol{\psi}+\mu\curl\mathbf{u}\curl\boldsymbol{\psi}\big)\mathrm{d}\mathbf{x}\mathrm{d}t.
\end{align*}
\end{definition}
Concerning the initial data $(\rho_0,\mathbf{u}_0)$, we always assume that there exist two positive constants $\hat{\rho}$ and $M$ (not necessarily small) such that
\begin{gather}\label{z1.7}
0\leq\inf\rho_0\leq\sup\rho_0\leq\hat{\rho},
\\ \label{z1.8}
\mathbf{u}_0\in H_\omega^1,\  C_0+(2\mu+\lambda)\|\divv\mathbf{u}_0\|_{L^2}^2+\mu\|\curl\mathbf{u}_0\|_{L^2}^2\leq M.
\end{gather}

Now we state our first result on the global existence of weak solutions.

\begin{theorem}\label{t1.1}
Let \eqref{z1.7} and \eqref{z1.8} be satisfied, there exists a positive number $D$ depending only
on $\hat{\rho}$, $a$, $\gamma$, $\mu$, and $\Omega$ such that if
\begin{equation}\label{lam}
\lambda\geq\exp\bigg\{(2+M)^{e^{D(1+C_0)^2}}\bigg\},
\end{equation}
then the problem \eqref{a1}--\eqref{a3} admits a global weak solution $(\rho,\mathbf{u})$ in the sense of Definition $\ref{d1.1}$ satisfying
\begin{equation}\label{reg}
\begin{cases}0\leq\rho(\mathbf{x},t)\leq2\hat{\rho}~a.e.~\mathrm{on}~\Omega\times[0,\infty),\\
(\rho,\sqrt{\rho}\mathbf{u})\in C([0,\infty);L^2(\Omega)),~\mathbf{u}\in L^\infty(0,\infty;H^1(\Omega)),\\
(\nabla^2\mathcal{P}\mathbf{u},\nabla F,\sqrt{\rho}\dot{\mathbf{u}})\in L^2(\Omega\times(0,\infty)),\\
\sigma^{\frac{1}{2}}\sqrt{\rho}\dot{\mathbf{u}}\in L^\infty(0,\infty;L^2(\Omega)),~ \sigma^{\frac{1}{2}}\nabla\dot{\mathbf{u}}\in L^2(\Omega\times(0,\infty)),
\end{cases}
\end{equation}
where $\sigma\triangleq\min\{1,t\}$.
\end{theorem}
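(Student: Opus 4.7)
The plan is to follow the Hoff--Perepelitsa scheme refined with the logarithmic-interpolation approach of \cite{DM23,HWZ2,WWZ}. I would first regularize the data to $\rho_0^\delta=\rho_0+\delta>0$ and take a smooth sequence $\mathbf{u}_0^\delta\to\mathbf{u}_0$ in $H_\omega^1$, invoke local classical existence for the resulting non-vacuum Navier-slip problem, establish a priori bounds uniform in $\delta$ and in the time horizon under hypothesis \eqref{lam}, and finally extract a weak limit with the regularity \eqref{reg}. The analytic backbone is the identity
\begin{equation*}
\rho\dot{\mathbf{u}}=\nabla F-\mu\nabla^\bot\curl\mathbf{u},
\end{equation*}
which, combined with the Navier-slip condition $\curl\mathbf{u}|_{\partial\Omega}=0$ (forcing $(\nabla^\bot\curl\mathbf{u})\cdot\mathbf{n}|_{\partial\Omega}=0$) and the resulting boundary relation $\nabla F\cdot\mathbf{n}|_{\partial\Omega}=\rho\dot{\mathbf{u}}\cdot\mathbf{n}|_{\partial\Omega}$, recasts $F$ and $\curl\mathbf{u}$ as solutions of Laplace problems with explicit boundary data whose $H^1$ (and, with more work, $H^2$) regularity is controlled by $\|\sqrt{\rho}\dot{\mathbf{u}}\|_{L^2}$. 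Since $\Omega$ is simply connected, this in turn delivers $H^2$ control of $\mathcal{P}\mathbf{u}$ via the Helmholtz decomposition.

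The chain of a priori bounds then runs in the familiar order: basic energy in terms of $C_0$; $L_t^\infty H_x^1$ bounds on $\mathbf{u}$ together with $L_{t,x}^2$ bounds on $\sqrt{\rho}\dot{\mathbf{u}}$ obtained by testing $\eqref{a1}_2$ against $\mathbf{u}_t$ and $\dot{\mathbf{u}}$; and time-weighted analogues obtained by differentiating $\eqref{a1}_2$ in $t$ and testing against $\sigma\dot{\mathbf{u}}$. The decisive step is the uniform density bound $\rho\leq 2\hat{\rho}$: the mass equation along particle trajectories becomes
\begin{equation*}
\frac{\mathrm{d}}{\mathrm{d}t}\log\rho+\frac{1}{2\mu+\lambda}\bigl(P(\rho)-P(\bar{\rho})\bigr)=-\frac{1}{2\mu+\lambda}F,
\end{equation*}
so Zlotnik's lemma reduces matters to estimating $\lambda^{-1}\int_0^t\|F(\cdot,s)\|_{L^\infty}\mathrm{d}s$. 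This is precisely where a Desjardins-type logarithmic interpolation inequality on $\Omega$ enters: it bounds $\|F\|_{L^\infty}^2$ by a constant times $\log(1+\|\nabla F\|_{L^2}^2)$ up to lower-order norms, and the resulting logarithmic losses are absorbed by the double-exponential largeness of $\lambda$ prescribed in \eqref{lam} through a Gronwall bootstrap.

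The main obstacle, relative to the flat-boundary treatments in \cite{HWZ2,WWZ}, is that neither half-space reflection nor Fourier multiplier methods survive on a curved boundary, so the elliptic estimates for $F$ and $\mathcal{P}\mathbf{u}$, the relevant Gagliardo--Nirenberg steps, and the Desjardins-type inequality itself must all be reworked intrinsically on $\Omega$. Here I would exploit two geometric features: simple connectedness of $\Omega$, which kills the harmonic part of the Helmholtz decomposition under $\mathbf{u}\cdot\mathbf{n}=0$ and lets $\mathbf{u}$ be fully recovered from $(\divv\mathbf{u},\curl\mathbf{u})$; and convexity, which furnishes the sign condition on $\partial\Omega$ needed to close the $H^2$ estimate for $\mathcal{P}\mathbf{u}$ without an indefinite boundary term. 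The most delicate new ingredient is the curvature-generated boundary contribution appearing when testing $\eqref{a1}_2$ against $\dot{\mathbf{u}}$: since $\mathbf{u}\cdot\mathbf{n}=0$ on $\partial\Omega$, one computes $\dot{\mathbf{u}}\cdot\mathbf{n}|_{\partial\Omega}=-\mathbf{u}\cdot(\nabla\mathbf{n})\mathbf{u}$, which is nonzero on a non-flat boundary and produces integrals like $\int_{\partial\Omega}F\,\mathbf{u}\cdot(\nabla\mathbf{n})\mathbf{u}\,\mathrm{d}S$ that must be controlled by trace inequalities and absorbed into the viscous dissipation, with constants depending only on the curvature of $\partial\Omega$. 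Once such a priori bounds are established uniformly in $\delta$, weak compactness, strong $C([0,T];L^p)$ convergence of $\rho^\delta$, and standard identification of the nonlinear fluxes yield a global weak solution with the properties \eqref{reg}.
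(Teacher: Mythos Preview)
Your high-level architecture (regularize to strictly positive density, propagate a priori bounds uniformly, pass to the limit) matches the paper, but two of the analytic steps you single out are misplaced and, as stated, would not close.

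First, the Desjardins-type logarithmic interpolation inequality is not an $L^\infty$ bound for $F$; it is the estimate
\[
\|\sqrt{\rho}\mathbf{u}\|_{L^4}^2\leq C(\hat{\rho},\Omega)\bigl(1+\|\sqrt{\rho}\mathbf{u}\|_{L^2}\bigr)\|\nabla\mathbf{u}\|_{L^2}\sqrt{\ln\bigl(2+\|\nabla\mathbf{u}\|_{L^2}^2\bigr)},
\]
and in the paper it enters at the \emph{first} Hoff level, when testing $\eqref{a1}_2$ against $\mathbf{u}_t$: the convective term $\int\rho\dot{\mathbf{u}}\cdot(\mathbf{u}\cdot\nabla)\mathbf{u}\,\mathrm{d}\mathbf{x}$ is bounded via $\|\sqrt{\rho}\mathbf{u}\|_{L^4}\|\nabla\mathbf{u}\|_{L^4}$, and the logarithmic factor is what generates the $\ln f_1$-type Gronwall inequality and ultimately the double-exponential threshold \eqref{lam}. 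There is no inequality of the form ``$\|F\|_{L^\infty}^2\lesssim\log(1+\|\nabla F\|_{L^2}^2)$'' available on a 2D domain; the paper instead controls $\int_0^{\sigma(T)}\|F\|_{L^\infty}\mathrm{d}t$ by Gagliardo--Nirenberg through $\|\nabla F\|_{L^4}\lesssim\|\rho\dot{\mathbf{u}}\|_{L^4}+\|\nabla\mathbf{u}\|_{L^4}$, which requires the \emph{second} Hoff level (time-weighted $\sigma\|\nabla\dot{\mathbf{u}}\|_{L^2}^2$ estimates). Relatedly, the bootstrap hypothesis in the paper is not the density bound alone but the pair $\rho\le 2\hat{\rho}$ and $(2\mu+\lambda)^{-2}\int_0^T\|P-\bar P\|_{L^4}^4\,\mathrm{d}t\le 2$; closing the latter (via the transport equation for $P-\bar P$ and the damping produced by $F$) is an independent step you do not mention.

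Second, convexity of $\Omega$ is not used for any sign condition in the $H^2$ estimate of $\mathcal{P}\mathbf{u}$; the div--curl elliptic estimates hold on any smooth simply connected bounded domain. Convexity enters \emph{only} in the proof of the Desjardins-type inequality above, through the mollification bound $\|\mathbf{u}-\mathbf{u}\ast\varphi_n\|_{L^2}\le Cn^{-1}\|\nabla\mathbf{u}\|_{L^2}$ on a convex domain. Your identification of the curvature boundary term $\dot{\mathbf{u}}\cdot\mathbf{n}|_{\partial\Omega}=-\mathbf{u}\cdot\nabla\mathbf{n}\cdot\mathbf{u}$ is correct and is indeed the main new technical burden; in the paper these terms are handled not merely by trace inequalities but by combining several boundary integrals (e.g.\ $\tilde B_1+\tilde B_2+\tilde A_1$ and $\tilde B_3+\tilde A_2$) so that the dangerous pieces involving $P_t$ or $\nabla\divv\mathbf{u}$ cancel, and by working with $\divf\mathbf{u}=\divv\mathbf{u}_t+\mathbf{u}\cdot\nabla\divv\mathbf{u}$ rather than $\divv\dot{\mathbf{u}}$.
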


The next result will treat the incompressible limit (characterised by the large value of the bulk viscosity) of the global weak solutions established in Theorem \ref{t1.1}.

\begin{theorem}\label{t1.2}
Let $\{(\rho^\lambda,{\bf u}^\lambda)({\bf x},t)\}$ be the family of solutions obtained in Theorem \ref{t1.1}. Then, there exists a subsequence $\{\lambda_k\}$
with $\lambda_k\rightarrow\infty$ such that
\begin{align}\label{1.11}
 \rho^{\lambda_{k}}\rightarrow \varrho ~~&\text{strongly in} ~ L^2(K), \ \
 \text{for any compact set} \ K \subset \Omega \text{ and any} \ t\ge 0 ,\\
 {\bf u}^{\lambda_{k}}\rightarrow {\bf v}
 ~~&\text{uniformly on compact sets in}~\Omega\times(0,\infty),\notag
\end{align}
where $(\varrho,{\bf v})$ is a global weak solution to the inhomogeneous incompressible Navier-Stokes equations \eqref{a5} in the sense of Definition \ref{d1.2} below.
\end{theorem}

\begin{definition}\label{d1.2}
A pair $(\varrho, {\bf v})$ is said to be a weak solution to the problem \eqref{a5} provided that
\begin{gather}
\varrho\in L^\infty(\Omega\times (0,\infty)),~~
\sqrt{\varrho}\mathbf{v}\in L^\infty([0,\infty); L^2(\Omega)),~~
({\bf v},\nabla{\bf v})\in L^2(\Omega\times (0,\infty)),\notag\\
\varrho\in C([0,\infty);L^{2}(\Omega)).\label{1.12}
\end{gather}
Moreover, for any $t_2\geq t_1\geq 0$ and any $C^1$ test function $(\phi,\boldsymbol\psi)$
just as in Definition \ref{d1.1}, which additionally satisfies $\divv\boldsymbol\psi(\cdot,t)=0$ on $\Omega\times[0,\infty)$, the following identities hold:
\begin{gather}
\int_{\Omega}\varrho(\mathbf{x},\cdot)\phi(\mathbf{x},\cdot)\mathrm{d}\mathbf{x}
\Big|_{t_1}^{t_2}=\int_{t_1}^{t_2}\int_{\Omega}(\varrho\phi_t+
\varrho\mathbf{v}\cdot\nabla\phi)\mathrm{d}\mathbf{x}\mathrm{d}t, \label{1.13} \\
\int_{\Omega}(\varrho\mathbf{v}\cdot\boldsymbol{\psi})
(\mathbf{x},\cdot)\mathrm{d}\mathbf{x}\Big|_{t_1}^{t_2}
=\int_{t_1}^{t_2}
\int_{\Omega}\big(\varrho\mathbf{v}\cdot\boldsymbol{\psi}_t+
\varrho v^i\mathbf{v}\cdot\partial_i\boldsymbol{\psi}
-\mu\curl\mathbf{v}\curl\boldsymbol{\psi}\big)\mathrm{d}\mathbf{x}\mathrm{d}t.\label{1.14}
\end{gather}
\end{definition}

Several remarks are in order.

\begin{remark}
It should be noted that Theorem \ref{t1.1} holds for arbitrarily large initial energy as long as the bulk viscosity coefficient is suitably large, which is in sharp contrast to \cite[Theorem 1.3]{CAI23} where the small initial energy is needed.
\end{remark}

\begin{remark}
Our work extends the significant results of Danchin and Mucha in \cite{DM17,DM23} to two-dimensional bounded convex domains under the slip boundary conditions. It should be noted that the methods in \cite{DM23} seem not to be directly applicable in the presence of boundaries and non-zero initial total momentum. Moreover, compared with \cite[Theorem 1.1 and Corollary 1.1]{DM17}, the density in our theorems is allowed to have large oscillation and vacuum states.
\end{remark}

\begin{remark}
Theorems \ref{t1.1} and \ref{t1.2} are generalizations of our previous work \cite{WWZ} in the half-plane case. However, the task is non-trivial as the prior approach relies heavily on the geometric flatness of the boundary. Looking forward, it is natural to ask whether analogous results
hold for 2D bounded domains with distinct boundary conditions, such as Dirichlet or free boundary conditions. New ideas are required to handle these cases, which will be left for future studies.
\end{remark}

\begin{remark}
We emphasize that the convexity of domain $\Omega$ plays a crucial role in establishing a Desjardins-type logarithmic interpolation inequality \eqref{z2.1}. It seems difficult for us to remove this technical condition.
\end{remark}


\subsection{Strategy of the proof}
We shall now explain the major difficulties and techniques in the proof. The demonstration of Theorem \ref{t1.1} necessitates the construction of global smooth approximate solutions. To this aim, we mainly adopt the local existence theory with strictly positive initial density and the blow-up criterion (see Lemma \ref{l2.1}), and then let the lower bound of the initial density go to zero. The pivotal aspect of our analytical framework involves deriving uniform \textit{a priori} estimates which are independent of the lower bound of density and the bulk viscosity, thus enabling us to study the incompressible limit.

It should be pointed out that the crucial techniques used in \cite{DM23,WWZ} cannot be adopted to the situation treated here since their arguments rely heavily on the facts that domain is without boundary or flat. Moreover, compared with the case of 3D bounded domains \cite{CAI23}, the absence of small initial energy and the bounds of {\it a priori} estimates independent of the bulk viscosity $\lambda$ will give rise to additional difficulties. Consequently, some new observations and ideas are needed to overcome these obstacles.

It was shown in \cite{SZ11} that if $0<T^*<\infty$ is the maximal existence time of strong solutions to \eqref{a1}--\eqref{a3}, then
\begin{equation*}
\limsup\limits_{T\nearrow T^*}\|\rho\|_{L^\infty(0,T;L^\infty)}=\infty,
\end{equation*}
which implies that the key issue is to obtain the time-independent upper bound of the density. One of crucial steps is to establish the time-independent upper bound of $\|\nabla\mathbf{u}\|_{L^2}^2$ and especially $(2\mu+\lambda)\|\divv\mathbf{u}\|_{L^2}^2$ (see Lemma \ref{l3.2}).
For this purpose, a key point is to handle the term $\int\rho \dot{\mathbf{u}}\cdot(\mathbf{u}\cdot\nabla)\mathbf{u}\mathrm{d}\mathbf{x}$ in \eqref{z3.9}, where the direct $L^4$-estimate of $\mathbf{u}$ does not suffice. So we turn to considering
the $L^\infty$-norm of $\mathbf{u}$ in two-dimensional space.
Fortunately, the lack of $H^1\hookrightarrow L^\infty$ can be remedied with the aid of Moser--Trudinger inequality, where $L^\infty$-boundedness is replaced by an exponential integrability. This critical phenomenon stems from the logarithmic divergence of Green's function and is deeply linked to the conformal structure of the sphere (via stereographic projection). We thus deal with $\|\sqrt{\rho}\mathbf{u}\|_{L^4}$ via a Desjardins-type logarithmic interpolation inequality (see Lemma $\ref{log}$), from which one sees that
$\int\rho \dot{\mathbf{u}}\cdot(\mathbf{u}\cdot\nabla)\mathbf{u}\mathrm{d}\mathbf{x}$
can be controlled in terms of $\frac{1}{(2\mu+\lambda)^2}\|P(\rho)-\bar{P}\|_{L^4}^4$ (see  \eqref{z3.10}--\eqref{z3.11}). Hence, we assume the \textit{a priori hypothesis} for the integrability in time of $\frac{1}{(2\mu+\lambda)^2}\|P-\Bar{P}\|_{L^4}^4$ (see \eqref{z3.1}). To complete the proof of the \textit{a priori hypothesis} (that is, one needs to show \eqref{z3.2}), we observe from the momentum equation \eqref{a1}$_2$ that
\begin{equation*}
\divv\mathbf{u}=\frac{-(-\Delta)^{-1}\divv(\rho\dot{\mathbf{u}})+P-\bar{P}}{2\mu+\lambda},
\end{equation*}
which motivates us to build the estimates on the material derivative of the velocity (see Lemmas $\ref{l3.2}$ and $\ref{l3.4}$). In particular, distinct from conventional approaches for this upper bound of the integrability in time in bounded domains, we circumvent the utilization of Bogovskii operator due to the assumptions on the bulk viscosity (see \eqref{z3.27}).

However, since the bulk viscosity emerges in conjunction with the divergence of the velocity, it seems to be difficult to extract information of the effective viscous flux from $\nabla\mathbf{u}$. Indeed, the consideration of deriving the $L^\infty(0,\min\{1,T\};L^2)$-norm for the material derivative of the velocity (see Lemma $\ref{l3.4}$) immediately presents new challenges.
In contrast to prior literature concerning slip boundary problems, we have to apply Hoff's strategy to the equation
\begin{equation*}
\rho\dot{\mathbf{u}}-(2\mu+\lambda)\nabla\divv \mathbf{u}+\mu\nabla^\bot\curl \mathbf{u}+\nabla P=0
\end{equation*}
rather than
\begin{equation*}
  \rho\dot{\mathbf{u}}=\nabla F-\mu\nabla^\bot\curl\mathbf{u},
\end{equation*}
which gives rise to numerous intractable terms in the absence of the effective viscous flux. In view of this, we can devise new approaches to address these issues, as detailed below.

First, it is very hard to obtain the estimates for the term
\begin{equation*}
  (2\mu+\lambda)\sigma\int\dot{u}^j\left(\partial_j\divv\mathbf{u}_t +\divv(\mathbf{u}\partial_j\divv\mathbf{u})\right)\mathrm{d}\mathbf{x}.
\end{equation*}
As a fallback, motivated by \cite{HWZ}, we shall pursue (see \eqref{z3.31})
\begin{equation*}
  (2\mu+\lambda)\int(\divf\mathbf{u})^2\mathrm{d}\mathbf{x}~~~~
\text{rather than}
~~~~(2\mu+\lambda)\int(\divv\dot{\mathbf{u}})^2\mathrm{d}\mathbf{x}.
\end{equation*}
But this will make the estimates more delicate and complicated (see \eqref{z3.31}--\eqref{z3.34}),
particularly concerning the additional boundary integrals (see \eqref{z3.35} and \eqref{z3.38}).

Second, we have to use the Hodge-type decomposition, because only partial information (curl-free part) from $\|\nabla\mathbf{u}\|_{L^p}$ can be controlled in combination
with the bulk viscosity $\lambda$. How to obtain the estimate for the divergence-free part is a core challenge.
An important consideration is to avoid the occurrence of boundary integrals like
\begin{equation*}
  \int_{\partial\Omega} F(\mathcal{P}\mathbf{u})_t\cdot\nabla(\mathcal{P}\mathbf{u})\cdot\mathbf{n}\mathrm{ds} \ \
   \text{and} \ \ \int_{\partial\Omega} F(\mathcal{P}\mathbf{u})\cdot\nabla(\mathcal{P}\mathbf{u})_t\cdot\mathbf{n}\mathrm{ds}
\end{equation*}
without the information of bulk viscosity on the self-convection of $\mathcal{P}\mathbf{u}$. This also represents one of our primary objectives in \eqref{z3.32}, distinct from the half-plane case where integration by parts plays a crucial role. More precisely, if we turn our attention to the viscosity occurred in the effective viscous flux $F$, the Gagliardo--Nirenberg inequality yields that the term $P(\rho)-\bar{P}$ (or $P(\rho)-P(\tilde{\rho})$ in the half-plane case \cite{WWZ}) suppresses the power of $\lambda$ in the sense of
\begin{equation*}
\|F\|_{L^p}\lesssim(2\mu+\lambda)^\frac2p\cdots.
\end{equation*}
For the domain bounded by a closed boundary, the Poincar\'{e} inequality yields uniform-in-$\lambda$ estimates:
\begin{align*}
\|F\|_{L^p}\leq C\|F\|_{H^1}\leq C\|\nabla F\|_{L^2},
\end{align*}
which seems to suggest that the term $P(\rho)-\bar{P}$ in the bounded domains exerts a stronger damping effect on the viscosity coefficients than $P(\rho)-P(\tilde{\rho})$ does in the half-plane. Meanwhile, the higher regularity of weak solutions obtained allows us to work with $\|\nabla^2\mathcal{P}\mathbf{u}\|_{L^2}$ instead of the more problematic $\|\nabla^2\mathbf{u}\|_{L^2}$, resulting in no boundary terms with $\mathcal{P}\mathbf{u}$ in \eqref{z3.32}.

Last but not least, the additional presence of the boundary integral terms is analytically intractable, because density is not defined on the boundary and the trace theorem will fail in some cases (see \eqref{z3.38}). To this end, we address the divergence theorem and achieve cancelation among a subset of boundary terms (see \eqref{z3.35} and \eqref{z3.38}),
circumventing the estimates for some terms such as
\begin{equation*}
  \int_{\partial\Omega} P\dot{\mathbf{u}}\cdot\nabla\mathbf{u}\cdot\mathbf{n}\mathrm{ds}~~\text{and}~~\int_{\partial\Omega}P_t\dot{\mathbf{u}}\cdot\mathbf{n}\mathrm{ds}.
\end{equation*}
Moreover, an observation of
\begin{equation*}
-\int_{\partial\Omega}\bar{P}_t\dot{\mathbf{u}}\cdot\mathbf{n}\mathrm{ds}=
(\gamma-1)\overline{P\divv\mathbf{u}}\int_{\partial\Omega}\dot{\mathbf{u}}\cdot\mathbf{n}\mathrm{ds}
\end{equation*}
leads us to consider (see \eqref{z3.35})
\begin{equation*}
\int_{\partial\Omega}(F-\bar{P})_t\dot{\mathbf{u}}\cdot\mathbf{n}\mathrm{ds}~~~~
\text{rather than}
~~~~\int_{\partial\Omega}F_t\dot{\mathbf{u}}\cdot\mathbf{n}\mathrm{ds}.
\end{equation*}
Furthermore, the application of the divergence theorem and crucial integration by parts enables us to avoid estimating some second-order spatial derivatives (see \eqref{z3.38}).

With the aforementioned difficulties resolved,
we then succeed in deriving the desired estimates on $L^\infty(0,\min\{1,T\};L^2)$-norm (see Lemma $\ref{l3.4}$).
Having these at hand, we can
obtain the time-independent upper bound of the density by applying Lagrangian
coordinates technique used in \cite{DE97} (see Lemma $\ref{l3.5}$).
Consequently, the proof of \textit{a priori hypothesis} is complete once the bulk viscosity is properly large (see {\it Proof of Proposition \ref{p3.1}}).
We point out that the effective viscous flux and Desjardins-type logarithmic interpolation inequality play essential roles in our analysis.

The rest of the paper is organized as follows. In the next section, we recall some known facts and elementary inequalities that will be used later. Section \ref{sec3} is devoted to obtaining {\it a priori} estimates. The proofs of Theorems \ref{t1.1} and \ref{t1.2} are presented in Sections \ref{sec4} and \ref{sec5}, respectively.

\section{Preliminaries}\label{sec2}

In this section, we collect some facts and elementary inequalities that will be used frequently later.

\subsection{Auxiliary results and inequalities}
In this subsection, we review some known lemmas and facts. First of all, we recall the following results concerning the local existence and the possible breakdown of strong solutions to the problem \eqref{a1}--\eqref{a3}, which have been proven in \cite{MN80} and \cite{SZ11}, respectively.
\begin{lemma}\label{l2.1}
Assume that
\begin{equation*}
\rho_0\in H^{2},~~\inf\limits_{\mathbf{x}\in\Omega}\rho_0(\mathbf{x})>0,~~
\mathbf{u}_0\in  H_\omega^2,
\end{equation*}
then there exists a positive constant $T$ such that the initial-boundary value problem \eqref{a1}--\eqref{a3} admits a unique strong solution $(\rho,\mathbf{u})$ satisfying
\begin{equation*}
(\rho,\mathbf{u})\in C([0,T]; H^{2}),\ \ \inf_{\Omega\times[0,T]}\rho(\mathbf{x},t)\geq\frac{1}{2}\inf_{\mathbf{x}\in\Omega}\rho_0(\mathbf{x})>0.
\end{equation*}
Moreover, if $T^*$ is the maximal time of existence, then it holds that
\begin{equation*}
\lim\sup_{T\nearrow T^*}\|\rho\|_{L^\infty(0,T;L^\infty)}=\infty.
\end{equation*}
\end{lemma}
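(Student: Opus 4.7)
The plan is to establish the two parts of the lemma separately: local-in-time existence via a linearization scheme, and the blow-up criterion by contradiction using successive energy estimates.

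For local existence, the hypothesis $\inf_\Omega \rho_0 > 0$ removes the degeneracy near vacuum, so the system reduces to a standard quasilinear parabolic-hyperbolic coupled system. I would set up a Picard iteration $\mathbf{v}\mapsto\mathbf{u}$ as follows. Given $\mathbf{v}$ in a suitable space, first solve the transport equation $\rho_t+\divv(\rho\mathbf{v})=0$ by the method of characteristics; the Lipschitz bound on $\mathbf{v}$ and the strict positivity of $\rho_0$ yield on a short time interval a strictly positive density with $\rho(t)\in H^2$ and quantitative upper/lower bounds. Then solve the linear Lam\'e-type system $\rho\mathbf{u}_t-\mu\Delta\mathbf{u}-(\mu+\lambda)\nabla\divv\mathbf{u}=-\rho(\mathbf{v}\cdot\nabla)\mathbf{v}-\nabla P(\rho)$ under the Navier-slip condition \eqref{a3}; the compatibility of the slip condition with the decomposition of the Lam\'e operator into $\nabla\divv$ and $\nabla^\bot\curl$ parts gives standard elliptic/parabolic regularity, so $\mathbf{u}(t)\in H^2\cap H^1_\omega$. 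Energy estimates in $H^2$, uniform on a short interval $[0,T]$, together with contraction of the iteration in a weaker topology (e.g.\ $L^2\times L^2$), provide a unique fixed point. Uniqueness of the strong solution follows by a standard $L^2$ energy estimate for the difference of two solutions.

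For the blow-up criterion, I argue by contradiction: assume $\limsup_{T\nearrow T^*}\|\rho\|_{L^\infty(0,T;L^\infty)}\le K<\infty$, and show that every higher-order norm needed to invoke the local existence result remains bounded up to $T^*$, contradicting maximality. The steps I would carry out in order are: (i) use the density bound and the basic energy identity to get $\|\sqrt{\rho}\mathbf{u}\|_{L^\infty_t L^2_x}$ and $\|\nabla\mathbf{u}\|_{L^2_t L^2_x}$; (ii) test the momentum equation against $\dot{\mathbf{u}}$ and exploit the effective viscous flux $F$ defined in \eqref{z1.4} together with the Hodge-type decomposition of $\nabla\mathbf{u}$ to obtain $L^\infty_t L^2_x$ control of $\nabla\mathbf{u}$ and $L^2_t L^2_x$ control of $\sqrt{\rho}\dot{\mathbf{u}}$, with constants depending only on $K$ and $T^*$; (iii) differentiate the momentum equation in time, test against $\dot{\mathbf{u}}$, and bound the resulting commutator terms to get control of $\sqrt{\rho}\dot{\mathbf{u}}$ and $\nabla\dot{\mathbf{u}}$; (iv) use Lam\'e elliptic regularity with the Navier-slip condition to upgrade this to $\|\mathbf{u}\|_{H^2}$; (v) finally differentiate the continuity equation and use the Lipschitz bound on $\mathbf{u}$ to close the estimate $\|\rho\|_{H^2}\le C(K,T^*)$.

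The main obstacle will be step (iv)--(v) because of the boundary: integration by parts in the higher-order estimates produces boundary integrals, and the material derivative $\dot{\mathbf{u}}$ does \emph{not} satisfy the Navier-slip condition (one only has $\dot{\mathbf{u}}\cdot\mathbf{n}=-\mathbf{u}\cdot(\mathbf{u}\cdot\nabla)\mathbf{n}$ on $\partial\Omega$, driven by the curvature of $\partial\Omega$). These boundary terms have to be absorbed via the trace inequality and the smoothness of $\partial\Omega$, exactly in the spirit of the boundary manipulations anticipated in \eqref{z3.35}--\eqref{z3.38} of the introduction. Once these terms are controlled, the extension argument is routine: pick $t_0<T^*$ close to $T^*$ so that $(\rho(t_0),\mathbf{u}(t_0))$ lies in the class of admissible initial data, re-apply the local existence part to produce a solution on $[t_0,t_0+\tau]$ with $\tau$ depending only on the bounds obtained, and conclude that the solution extends beyond $T^*$, yielding the required contradiction.
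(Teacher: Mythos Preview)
The paper does not actually prove Lemma~\ref{l2.1}; it simply records the statement and cites Matsumura--Nishida \cite{MN80} for the local existence part and Sun--Zhang \cite{SW11} for the blow-up criterion. Your outline is a reasonable sketch of how one would establish both assertions from scratch, so in that sense there is no error to flag, but you should be aware that you are reconstructing results the paper treats as black boxes.

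Two minor remarks on your sketch, should you wish to carry it out fully. First, in the local existence part you invoke the method of characteristics for the transport equation after assuming only $\mathbf{v}\in H^2$; in two dimensions $H^2\hookrightarrow C^{0,\alpha}$ but not $C^1$, so the characteristic flow needs a DiPerna--Lions or Osgood-type argument rather than classical Lipschitz ODE theory, or else you should iterate in a slightly higher regularity class. Second, in step (v) of the blow-up argument, propagating $\rho\in H^2$ requires $\nabla\mathbf{u}\in L^1_t L^\infty_x$ (to close the transport estimate on $\nabla^2\rho$), which in 2D does not follow directly from $\mathbf{u}\in L^\infty_t H^2_x$ since $H^1\not\hookrightarrow L^\infty$; one typically uses a logarithmic inequality (Beale--Kato--Majda type) or an $L^p$ bound on $\nabla^2\mathbf{u}$ with $p>2$. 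These are standard refinements, and the cited references \cite{MN80,SW11} handle them, but they are the places where your outline would need extra care.
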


The following Gagliardo--Nirenberg inequality (see \cite[Remark 2.1]{LWZ}) will be used frequently later.
\begin{lemma}\label{GN}
(Gagliardo--Nirenberg inequality, special case).
Assume that $\Omega$ is a bounded Lipschitz domain in $\mathbb{R}^2$.
For $p\in [2, \infty)$, $q\in(1, \infty)$, and $r\in (2, \infty)$, there exist generic constants $C_i>0\ (i\in\{1,2,3,4\})$ which may depend only on $p$, $q$, $r$, and $\Omega$ such that, for $f\in H^1$ and $g\in L^q$ with $\nabla g\in L^{r}$,
\begin{gather*}
\|f\|_{L^p}\le C_1\|f\|_{L^2}^\frac{2}{p}\|\nabla f\|_{L^2}^{1-\frac{2}{p}}+C_2\|f\|_{L^2},\\
\|g\|_{L^\infty}\le C_3\|g\|_{L^q}^\frac{q(r-2)}{2r+q(r-2)}\|\nabla g\|_{L^r}^\frac{2r}{2r+q(r-2)}+C_4\|g\|_{L^2}.
\end{gather*}
Moreover, if $\int_{\Omega}f(\mathbf{x})\mathrm{d}\mathbf{x}=0$ or $(f\cdot n)|_{\partial\Omega}=0$, we can choose $C_2=0$. Similarly, the constant $C_4=0$ provided $\int_{\Omega}g(\mathbf{x})\mathrm{d}\mathbf{x}=0$ or $(g\cdot n)|_{\partial\Omega}=0$.
\end{lemma}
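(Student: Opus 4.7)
The plan is to reduce both inequalities to their classical counterparts on $\mathbb{R}^{2}$ via an extension argument, and then absorb the residual lower-order terms using Poincar\'{e}'s inequality in the mean-zero and zero-normal-trace cases. Since $\Omega$ is bounded Lipschitz, Stein's extension theorem produces a bounded linear operator $E\colon W^{1,p}(\Omega)\to W^{1,p}(\mathbb{R}^{2})$ for every $p\in[1,\infty]$, with $Ef$ supported in a fixed bounded enlargement of $\Omega$ and operator norm depending only on $p$ and $\Omega$.

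On $\mathbb{R}^{2}$, the first inequality is the textbook Gagliardo--Nirenberg bound $\|f\|_{L^{p}}\leq C\|f\|_{L^{2}}^{2/p}\|\nabla f\|_{L^{2}}^{1-2/p}$; the second is a multiplicative Morrey-type estimate which I would derive by writing $|g(x)|\leq|g(x)-g_{B_{R}(x)}|+|g_{B_{R}(x)}|$, controlling the first term by $CR^{1-2/r}\|\nabla g\|_{L^{r}}$ through the standard potential-theoretic representation and the second by $CR^{-2/q}\|g\|_{L^{q}}$ through H\"older, and then optimizing $R$; the resulting balance produces exactly the exponents $\tfrac{2r}{2r+q(r-2)}$ and $\tfrac{q(r-2)}{2r+q(r-2)}$ in the statement.

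Pulling these estimates back to $\Omega$ through $E$, the Stein bound $\|\nabla Ef\|_{L^{p}(\mathbb{R}^{2})}\leq C(\|f\|_{L^{p}(\Omega)}+\|\nabla f\|_{L^{p}(\Omega)})$ combined with the subadditivity $(a+b)^{\alpha}\leq a^{\alpha}+b^{\alpha}$ for $\alpha\in[0,1]$ splits the first estimate into the clean multiplicative term plus a residual $C\|f\|_{L^{2}(\Omega)}$, giving $C_{2}>0$. An analogous computation for $g$ produces a residual proportional to $\|g\|_{L^{q}(\Omega)}$; this is dominated by $\|g\|_{L^{2}(\Omega)}$ directly when $q\leq 2$ (bounded-domain embedding), and when $q>2$ through the interpolation $\|g\|_{L^{q}}\leq\|g\|_{L^{\infty}}^{1-2/q}\|g\|_{L^{2}}^{2/q}$ followed by a Young-inequality absorption of $\|g\|_{L^{\infty}}$ back into the left-hand side, yielding $C_{4}>0$. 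For the improved statements $C_{2}=0$ and $C_{4}=0$, a Rellich-based contradiction argument shows that either $\bar{f}=0$ or $(f\cdot\mathbf{n})|_{\partial\Omega}=0$ forces the Poincar\'{e}-type inequality $\|f\|_{L^{2}}\leq C\|\nabla f\|_{L^{2}}$; rewriting $\|f\|_{L^{2}}=\|f\|_{L^{2}}^{2/p}\|f\|_{L^{2}}^{1-2/p}$ and invoking Poincar\'{e} on the second factor absorbs the residual into the multiplicative term.

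The main technical subtlety arises in the vector-valued case $(f\cdot\mathbf{n})|_{\partial\Omega}=0$, which unlike the zero-mean condition does not rule out constants in an elementary way: if $\{f_{k}\}$ is a minimizing sequence with $\|f_{k}\|_{L^{2}}=1$ and $\|\nabla f_{k}\|_{L^{2}}\to 0$, Rellich's compact embedding together with the smallness of $\nabla f_{k}$ force $f_{k}\to f_{*}$ strongly in $H^{1}$ with $f_{*}$ a constant vector field, and trace continuity preserves the condition $f_{*}\cdot\mathbf{n}=0$ on $\partial\Omega$; invoking the fact that the outward normals to a non-degenerate bounded planar domain span $\mathbb{R}^{2}$ forces $f_{*}=0$, contradicting $\|f_{*}\|_{L^{2}}=1$ and completing the Poincar\'{e} step.
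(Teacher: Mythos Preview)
Your proof is correct and self-contained. The paper, however, does not actually prove this lemma: it states it as ``the well-known Gagliardo--Nirenberg inequality'' and cites Nirenberg's 1959 paper \cite{NI1959} without further argument. So there is no ``paper's own proof'' to compare against; you have supplied one where the authors chose to invoke a reference.

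Your route---Stein extension to $\mathbb{R}^{2}$, classical Gagliardo--Nirenberg and Morrey scaling there, pullback with an additive lower-order remainder, then Poincar\'e to kill the remainder under $\bar f=0$ or $(f\cdot\mathbf{n})|_{\partial\Omega}=0$---is the standard way these bounded-domain statements are established, and every step checks out. In particular your optimization in $R$ reproduces the exponents $\tfrac{q(r-2)}{2r+q(r-2)}$ and $\tfrac{2r}{2r+q(r-2)}$ correctly, and your Rellich compactness argument for the normal-trace Poincar\'e inequality is sound: the limit is a constant vector $f_*$ with $f_*\cdot\mathbf{n}=0$ a.e.\ on $\partial\Omega$, and the normals of a bounded Lipschitz planar domain cannot all lie in a single line, forcing $f_*=0$. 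The only cosmetic point is that your absorption of the $\|g\|_{L^q}$ remainder when $q>2$ tacitly assumes $\|g\|_{L^\infty}<\infty$ before the absorption step; this is harmless since one may first prove the inequality with $C_4\|g\|_{L^q}$ on the right (which already gives $g\in L^\infty$) and then upgrade.
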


Next, we introduce a generalized Poincar{\'e}'s inequality (see \cite[Lemma 8]{BS2012}) adapted to our setting.
\begin{lemma}\label{PO}
Let $\Omega\subset\mathbb{R}^2$ be a bounded Lipschitz domain. Then, for $1<p<\infty$, there exists a positive constant $C$ depending only on $p$ and $\Omega$ such that
\begin{equation*}
\|f\|_{L^p} \leq  C\|\nabla f\|_{L^p},
\end{equation*}
for each vector field $f\in W^{1,p}(\Omega)$ satisfying either $\int_{\Omega}f(\mathbf{x})\mathrm{d}\mathbf{x}=0$ or $(f\cdot n)|_{\partial\Omega}=0$.
\end{lemma}

In addition, we have the following Desjardins-type logarithmic interpolation inequality, which extends the case of two-dimensional torus $\mathbb{T}^2$ in \cite[Lemma 2]{DE97} (see also \cite[Lemma 1]{D1997}) to the general bounded convex domains.
\begin{lemma}\label{log}
Let $\Omega\subset\mathbb{R}^2$ be a bounded convex domain with smooth boundary. Assume that $0\le\rho\le\hat{\rho}$ and $\mathbf{u}\in H^1$, then it holds that
\begin{equation}\label{z2.1}
\|\sqrt\rho\mathbf{u}\|_{L^4}^2\leq C(\hat{\rho},\Omega)(1+\|\sqrt\rho\mathbf{u}\|_{L^2})\|\nabla\mathbf{u}\|_{L^2}
\sqrt{\ln\left(2+\|\nabla\mathbf{u}\|_{L^2}^2\right)},
\end{equation}
where and in what follows we sometimes use $C(f)$ to emphasize the dependence on $f$.
\end{lemma}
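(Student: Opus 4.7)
The plan is to adapt Desjardins' dyadic argument on $\mathbb{T}^2$ to the bounded convex smooth domain $\Omega$. In place of Fourier modes I would use the $L^2$-orthonormal basis $\{\phi_k\}_{k\geq 0}$ of eigenfunctions of $-\Delta$ on $\Omega$ with the Neumann boundary condition $\partial_{\mathbf{n}}\phi_k = 0$, with eigenvalues $0=\mu_0<\mu_1\leq \mu_2\leq\cdots$. For a threshold $N\geq 1$ to be fixed later, I would decompose $\mathbf{u} = \mathbf{u}_L + \mathbf{u}_H$ componentwise, where $\mathbf{u}_L$ is the projection onto the modes with $\mu_k\leq N^2$. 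The target is to control $\int_\Omega \rho|\mathbf{u}|^4\,\mathrm{d}\mathbf{x}$; since $\rho^2\leq\hat{\rho}\rho$ we have $\|\sqrt{\rho}\mathbf{u}\|_{L^4}^4\leq\hat{\rho}\int_\Omega\rho|\mathbf{u}|^4\,\mathrm{d}\mathbf{x}$, and the stated bound follows by taking square roots.

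For the high-frequency part, spectral orthogonality gives $\|\mathbf{u}_H\|_{L^2}^2\leq N^{-2}\|\nabla\mathbf{u}\|_{L^2}^2$, and the 2D Gagliardo--Nirenberg inequality (Lemma~\ref{GN}) yields
\[
\int_\Omega\rho|\mathbf{u}_H|^4\,\mathrm{d}\mathbf{x}\leq\hat{\rho}\|\mathbf{u}_H\|_{L^4}^4\leq C(\hat{\rho})\|\mathbf{u}_H\|_{L^2}^2\|\nabla\mathbf{u}_H\|_{L^2}^2\leq C(\hat{\rho})N^{-2}\|\nabla\mathbf{u}\|_{L^2}^4.
\]
For the low-frequency part, I would use $\int_\Omega\rho|\mathbf{u}_L|^4\,\mathrm{d}\mathbf{x}\leq\|\mathbf{u}_L\|_{L^\infty}^2\int_\Omega\rho|\mathbf{u}_L|^2\,\mathrm{d}\mathbf{x}\leq 2\|\mathbf{u}_L\|_{L^\infty}^2(\|\sqrt{\rho}\mathbf{u}\|_{L^2}^2+\hat{\rho}\|\mathbf{u}_H\|_{L^2}^2)$, controlling $\|\mathbf{u}_L\|_{L^\infty}$ via a Brezis--Gallouet-type interpolation $\|\mathbf{u}_L\|_{L^\infty}\leq C\|\mathbf{u}_L\|_{H^1}\sqrt{\ln(e+\|\mathbf{u}_L\|_{H^2}/\|\mathbf{u}_L\|_{H^1})}\leq C\|\mathbf{u}\|_{H^1}\sqrt{\ln(e+N)}$. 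Summing the two contributions and selecting $N^2 = 2+\|\nabla\mathbf{u}\|_{L^2}^2$ recovers the logarithm $\ln(2+\|\nabla\mathbf{u}\|_{L^2}^2)$; the final replacement of $\|\mathbf{u}\|_{H^1}$ by $\|\nabla\mathbf{u}\|_{L^2}$ is achieved by a Poincar\'e--Wirtinger decomposition $\mathbf{u}=\bar{\mathbf{u}}+(\mathbf{u}-\bar{\mathbf{u}})$ and estimating $|\bar{\mathbf{u}}|$ against $\|\sqrt{\rho}\mathbf{u}\|_{L^2}$ plus a lower-order term, which is exactly what generates the factor $1+\|\sqrt{\rho}\mathbf{u}\|_{L^2}$.

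The hardest step is the low-frequency $L^\infty$ bound. The Brezis--Gallouet argument demands $\|\mathbf{u}_L\|_{H^2}\leq CN\|\mathbf{u}\|_{H^1}$, and from the spectral identity $\|\Delta\mathbf{u}_L\|_{L^2}^2=\sum_{\mu_k\leq N^2}\mu_k^2 c_k^2\leq N^2\|\nabla\mathbf{u}\|_{L^2}^2$ this reduces to the full $H^2$ elliptic regularity $\|\nabla^2 v\|_{L^2}\leq C\|\Delta v\|_{L^2}$ for Neumann functions $v\in H^2(\Omega)$. This is precisely where the convexity of $\Omega$ enters: Grisvard's theorem guarantees such an estimate with a uniform constant on smooth convex domains, whereas on a non-convex domain a boundary curvature term with an unfavorable sign appears and spoils the logarithmic structure. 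A second subtlety is handling the constant eigenmode $\phi_0$: the part of $\mathbf{u}_L$ coming from $\bar{\mathbf{u}}=c_0\phi_0$ must be controlled separately via $\|\sqrt{\rho}\bar{\mathbf{u}}\|_{L^2}^2=|\bar{\mathbf{u}}|^2\int_\Omega\rho\,\mathrm{d}\mathbf{x}$ together with a Cauchy--Schwarz bound against $\|\sqrt{\rho}\mathbf{u}\|_{L^2}$, and it is exactly this bookkeeping of the mean-value contribution that forces the form $1+\|\sqrt{\rho}\mathbf{u}\|_{L^2}$ rather than $\|\sqrt{\rho}\mathbf{u}\|_{L^2}$ alone on the right-hand side.
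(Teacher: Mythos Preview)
Your approach is valid but follows a genuinely different route from the paper. The paper does not use Neumann eigenfunctions or Brezis--Gallouet; instead it mollifies: with $\varphi_n(\mathbf{x})=n^2\varphi(n\mathbf{x})$ a standard mollifier and $\mathbf{u}_n=\mathbf{u}*\varphi_n$, it splits $\|\rho|\mathbf{u}|^2\|_{L^2}\le\|\rho\mathbf{u}\cdot(\mathbf{u}-\mathbf{u}_n)\|_{L^2}+\|\rho\mathbf{u}\cdot\mathbf{u}_n\|_{L^2}$, handles the first piece via the approximation rate $\|\mathbf{u}-\mathbf{u}_n\|_{L^2}\le Cn^{-1}\|\nabla\mathbf{u}\|_{L^2}$, and bounds $\|\mathbf{u}_n\|_{L^\infty}$ pointwise using the Moser--Trudinger inequality together with the elementary Young-type bound $ab\le e^{a^2}-1+Cb\sqrt{\ln(2+b)}$. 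Choosing $n\sim(2+\|\nabla\mathbf{u}\|_{L^2}^2)^{1/2}$ then produces the logarithm. The paper's argument is more elementary (no spectral theory, no $H^2$ regularity), while yours makes the frequency localization transparent and connects the result directly to the Brezis--Gallouet circle of ideas.

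One correction: your identification of where convexity enters is off. On a \emph{smooth} domain the Neumann regularity $\|\nabla^2 v\|_{L^2}\le C(\|\Delta v\|_{L^2}+\|v\|_{L^2})$ holds without convexity; Grisvard's convexity hypothesis is only needed for nonsmooth (e.g.\ polygonal) domains. In the paper, convexity is invoked for a different reason---to justify the mollification rate $\|\mathbf{u}-\mathbf{u}_n\|_{L^2}\le Cn^{-1}\|\nabla\mathbf{u}\|_{L^2}$ via \cite[Lemma~1.50]{M97}, which needs the segment from $\mathbf{x}$ to $\mathbf{x}-\mathbf{y}$ to stay inside $\Omega$. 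Thus your spectral argument would actually \emph{dispense with} the convexity assumption, which is worth noting rather than artificially reinserting. One further caveat: your plan to control $|\bar{\mathbf{u}}|$ purely through $\|\sqrt{\rho}\mathbf{u}\|_{L^2}$ does not work without a lower bound on $\int\rho$; in the paper's applications one has $\mathbf{u}\cdot\mathbf{n}|_{\partial\Omega}=0$, whence Poincar\'e gives $|\bar{\mathbf{u}}|\le C\|\nabla\mathbf{u}\|_{L^2}$ directly, and this is the clean way to close that step.
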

\begin{proof}
Let $\varphi(\mathbf{x})\in L^\infty$ be a nonnegative smooth function with
\begin{equation*}
\int_\Omega\varphi(\mathbf{x})\mathrm{d}\mathbf{x}=1.
\end{equation*}
For positive integer $n$, define $\varphi_n(\mathbf{x})= n^2\varphi(n\mathbf{x})$ and $\mathbf{u}_n(\mathbf{x}) = \mathbf{u} \ast \varphi_n(\mathbf{x})=\int_\Omega \mathbf{u}(\mathbf{x}-\mathbf{y})\varphi_n(\mathbf{y})\mathrm{d}\mathbf{y}$. Then it follows from H\"older's inequality and Lemma $\ref{GN}$ that
\begin{align}
\|\sqrt{\rho}\mathbf{u}\|_{L^4}^2&=\|\rho|\mathbf{u}|^2\|_{L^2}\notag\\
&\leq \|\rho\mathbf{u}\cdot(\mathbf{u}-\mathbf{u}_n)\|_{L^2}
+\|\rho\mathbf{u}\cdot\mathbf{u}_n\|_{L^2}\notag\\
&\leq \|\rho\mathbf{u}\|_{L^4}\|\mathbf{u}-\mathbf{u}_n\|_{L^4}+\|\rho\mathbf{u}\|_{L^2}
\|\mathbf{u}_n\|_{L^\infty}\notag\\
&\leq C\hat{\rho}^\frac{1}{2}\|\mathbf{u}-\mathbf{u}_n\|_{L^2}^{\frac{1}{2}}
\|\nabla(\mathbf{u}-\mathbf{u}_n)\|_{L^2}^{\frac{1}{2}}\|\sqrt{\rho}\mathbf{u}\|_{L^4}+ C\hat{\rho}^\frac{1}{2}\|\mathbf{u}-\mathbf{u}_n\|_{L^2}\|\sqrt{\rho}\mathbf{u}\|_{L^4}+
\hat{\rho}^\frac{1}{2}\|\sqrt{\rho}\mathbf{u}\|_{L^2}
\|\mathbf{u}_n\|_{L^\infty}\notag\\
&\leq C(\Omega)\hat{\rho}^\frac{1}{2}\left(\frac{1}{n^\frac{1}{2}}+\frac{1}{n}\right)\|\nabla\mathbf{u}\|_{L^2}\|\sqrt{\rho}\mathbf{u}\|_{L^4}+
\hat{\rho}^\frac{1}{2}\|\sqrt{\rho}\mathbf{u}\|_{L^2}
\|\mathbf{u}_n\|_{L^\infty}\notag\\
&\leq \frac{1}{2}\|\sqrt{\rho}\mathbf{u}\|_{L^4}^2+\frac{C(\hat{\rho},\Omega)}{n}\|\nabla\mathbf{u}\|_{L^2}^2+
\hat{\rho}^\frac{1}{2}\|\sqrt{\rho}\mathbf{u}\|_{L^2}
\|\mathbf{u}_n\|_{L^\infty}\notag,
\end{align}
where we have used the inequality
\begin{equation*}
\|\mathbf{u}-\mathbf{u}_{n}\|_{L^{2}}=\|\mathbf{u}-\mathbf{u}*\varphi_{n}\|_{L^{2}}\leq \frac{C(\Omega)}{n}\|\nabla\mathbf{u}\|_{L^{2}},
\end{equation*}
which can be deduced from Lemma 1.50\footnote{We point out that the convexity of domain $\Omega$ is used in \cite[Lemma 1.50]{M97}.} and Theorem 1.52 in \cite{M97}. Thus, we have
\begin{equation}\label{z2.2}
\|\sqrt{\rho}\mathbf{u}\|_{L^4}^2\leq \frac{C(\hat{\rho},\Omega)}{n}\|\nabla\mathbf{u}\|_{L^2}^2+
2\hat{\rho}^\frac{1}{2}\|\sqrt{\rho}\mathbf{u}\|_{L^2}
\|\mathbf{u}_n\|_{L^\infty}.
\end{equation}

For $0\le a,b<\infty$, in view of Young's inequality
\begin{equation*}
ab\leq\exp(a^2)-1+Cb\sqrt{\ln(2+b)}, ~~\text{for some constant}~C>0,
\end{equation*}
and the Moser--Trudinger inequality concerning bounded connected $C^{1,\alpha} (\alpha\in(0,1])$ domain in \cite{TR67}
\begin{equation*}
\int_{\Omega}\exp\left(\frac{c|f(\mathbf{x})-\bar{f}|^2}{\|\nabla f\|_{L^2}^2}\right)\mathrm{d}\mathbf{x}\leq C(\Omega), ~~\text{for some constants}~c, C(\Omega)>0,
\end{equation*}
one gets that
\begin{align}
|\mathbf{u}_n(\mathbf{x})|&=\left|\int_{\Omega}\mathbf{u}(\mathbf{x}-\mathbf{\mathbf{y}})\varphi_n(\mathbf{\mathbf{y}})\mathrm{d}\mathbf{y}\right|\notag\\
&\leq\int_{\Omega}\frac{\sqrt{c}|\mathbf{u}(\mathbf{x}-\mathbf{y})-\bar{\mathbf{u}}|}{\|\nabla\mathbf{u}\|_{L^2}}\frac{\|\nabla\mathbf{u}\|_{L^2}}{\sqrt{c}}\varphi_n(\mathbf{y})\mathrm{d}\mathbf{y}
+\int_{\Omega}|\bar{\mathbf{u}}|\varphi_n(\mathbf{\mathbf{y}})\mathrm{d}\mathbf{y}
\notag\\
&\leq\int_{\Omega}\left(\exp\left(\frac{c|\mathbf{u}(\mathbf{x}-\mathbf{y})-\bar{\mathbf{u}}|^2}{\|\nabla\mathbf{u}\|_{L^2}^2}\right)-1\right)\mathrm{d}\mathbf{y}
+C\int_{\Omega}\|\nabla\mathbf{u}\|_{L^2}\varphi_n(\mathbf{y})\sqrt{\ln(2+\|\nabla\mathbf{u}\|_{L^2}\varphi_n(\mathbf{y}))}\mathrm{d}\mathbf{y}+C\notag\\
&\leq C+C\|\nabla\mathbf{u}\|_{L^2}\sqrt{\ln(2+n^2\|\nabla\mathbf{u}\|_{L^2})}\int_{\Omega}\varphi_n(\mathbf{y})\mathrm{d}\mathbf{y}\notag\\
&=C+C\|\nabla\mathbf{u}\|_{L^2}\sqrt{\ln(2+n^2\|\nabla\mathbf{u}\|_{L^2})}\notag,
\end{align}
which combined with \eqref{z2.2} yields that
\begin{align}
\|\sqrt{\rho}\mathbf{u}\|_{L^4}^2&\leq \frac{C}{n}\|\nabla\mathbf{u}\|_{L^2}^2+C\|\sqrt{\rho}\mathbf{u}\|_{L^2}\left(1+\|\nabla\mathbf{u}\|_{L^2}
\sqrt{\ln(2+n^2\|\nabla\mathbf{u}\|_{L^2})}\right)\notag\\
&\leq C\left(1+\|\sqrt{\rho}\mathbf{u}\|_{L^2}\right)\|\nabla\mathbf{u}\|_{L^2}
\left(\frac{\|\nabla\mathbf{u}\|_{L^2}}{n}+\sqrt{\ln(2+n^2\|\nabla\mathbf{u}\|_{L^2})}\right)\notag.
\end{align}
 Thus we can obtain the desired \eqref{z2.1} by choosing the positive integer $n=\left[\left(2+\|\nabla\mathbf{u}\|_{L^2}^2\right)^\frac{1}{2}\right]$.
\end{proof}

It should be emphasized that the statement of Lemma \ref{log} has nothing to do with the system \eqref{a1}. It is simply an interpolation between different functional spaces.

Next, the following div-curl decomposition is given in \cite{WW92,PL96}.
\begin{lemma}\label{Hodge}
Let $1<q<\infty$ and $\Omega$ be a simply connected bounded domain in $\mathbb{R}^2$ with Lipschitz boundary $\partial\Omega$ (e.g., a bounded convex domain). For $\mathbf{u}\in W^{1,q}$ satisfying $(\mathbf{u}\cdot\mathbf{n})|_{\partial\Omega}=0$, there exists a constant $C=C(q, \Omega)>0$ such that
\begin{equation*}
    \|\nabla \mathbf{u}\|_{L^q}\leq C(\|\divv \mathbf{u}\|_{L^q}+\|\curl \mathbf{u}\|_{L^q}).
\end{equation*}
\end{lemma}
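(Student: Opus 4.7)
The plan is to establish the bound via the Helmholtz--Hodge decomposition: I would write $\mathbf{u}=\nabla\phi+\nabla^\bot\psi$, where $\phi$ and $\psi$ are chosen as solutions of two auxiliary elliptic problems. Specifically, I would let $\phi$ solve the Neumann problem
\begin{equation*}
\Delta\phi=\divv\mathbf{u}\ \ \text{in}\ \Omega,\qquad \partial_{\mathbf{n}}\phi=0\ \ \text{on}\ \partial\Omega,
\end{equation*}
which is solvable thanks to the compatibility condition $\int_\Omega\divv\mathbf{u}\,\mathrm{d}\mathbf{x}=\int_{\partial\Omega}\mathbf{u}\cdot\mathbf{n}\,\mathrm{ds}=0$, and I would let $\psi$ solve the Dirichlet problem
\begin{equation*}
\Delta\psi=-\curl\mathbf{u}\ \ \text{in}\ \Omega,\qquad \psi=0\ \ \text{on}\ \partial\Omega.
\end{equation*}
A direct computation gives $\divv(\nabla^\bot\psi)=0$ and $\curl(\nabla\phi)=0$, while $\nabla^\bot\psi\cdot\mathbf{n}=\partial_\tau\psi=0$ on $\partial\Omega$ because $\psi$ is constant along the boundary. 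Hence the vector field $\mathbf{v}\triangleq\mathbf{u}-\nabla\phi-\nabla^\bot\psi$ satisfies $\divv\mathbf{v}=0$, $\curl\mathbf{v}=0$ in $\Omega$ and $\mathbf{v}\cdot\mathbf{n}=0$ on $\partial\Omega$.

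The next step is to exploit the simple connectedness of $\Omega$: since $\curl\mathbf{v}=0$, one can write $\mathbf{v}=\nabla\chi$ for some scalar potential $\chi$; then $\Delta\chi=\divv\mathbf{v}=0$ together with $\partial_{\mathbf{n}}\chi=0$ on $\partial\Omega$ forces $\chi$ to be constant, so $\mathbf{v}\equiv 0$ and the decomposition $\mathbf{u}=\nabla\phi+\nabla^\bot\psi$ is genuine. Consequently,
\begin{equation*}
\|\nabla\mathbf{u}\|_{L^q}\leq\|\nabla^2\phi\|_{L^q}+\|\nabla^2\psi\|_{L^q}.
\end{equation*}

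To close the argument, I would invoke the $W^{2,q}$ Calder\'on--Zygmund estimates for the scalar Poisson equation on $\Omega$ under, respectively, Neumann and Dirichlet boundary conditions, which yield
\begin{equation*}
\|\nabla^2\phi\|_{L^q}\leq C\|\divv\mathbf{u}\|_{L^q},\qquad \|\nabla^2\psi\|_{L^q}\leq C\|\curl\mathbf{u}\|_{L^q},
\end{equation*}
with a constant $C=C(q,\Omega)$. Adding these two inequalities gives the desired bound.

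The main obstacle is justifying the full range $1<q<\infty$ for the two auxiliary elliptic estimates when $\partial\Omega$ is merely Lipschitz, since on general Lipschitz domains the $W^{2,q}$-regularity for the Laplacian is known only for a restricted interval of exponents. In the present paper this is not a real issue because the domain is assumed smooth (and convex) from the outset, but for a clean statement in the Lipschitz setting I would cite the layer-potential / Helmholtz-decomposition results of \cite{WW92,PL96,AR14} rather than attempt to reprove them; those references provide exactly the required $L^q$-boundedness of the Leray projector and its complement in a simply connected Lipschitz planar domain, and the above decomposition reduces the lemma to their framework.
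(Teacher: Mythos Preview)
The paper does not give its own proof of this lemma; it simply records the statement with a citation to \cite{WW92,PL96,AR14}. Your sketch is therefore not competing with an argument in the paper but supplying one, and the route you take---solve the Neumann and Dirichlet Poisson problems for $\phi$ and $\psi$, check that $\mathbf{v}=\mathbf{u}-\nabla\phi-\nabla^\bot\psi$ is harmonic with vanishing normal trace, kill $\mathbf{v}$ by simple connectedness, and conclude via $W^{2,q}$ elliptic estimates---is the standard and correct one on smooth domains. Your caveat at the end is exactly the right one: on a domain that is only Lipschitz, the $W^{2,q}$ Calder\'on--Zygmund bounds for the Neumann and Dirichlet Laplacians are not available for the full range $1<q<\infty$, so the argument as written proves the lemma under the paper's actual standing hypothesis (smooth $\partial\Omega$) rather than under the Lipschitz hypothesis in the lemma's statement. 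Deferring the Lipschitz case to \cite{WW92,PL96,AR14}, as you do and as the paper does, is appropriate.
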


\begin{remark}
When it comes to the exterior domain, the conclusion of Lemma $\ref{Hodge}$ is no longer true since the first Betti number does not vanish. In fact, the authors \cite{DF03} proved that there exists a unique classical solution to the problem
\begin{align*}
\begin{cases}
\divv \mathbf{v}=0,~~\mathbf{x}\in\Omega^c,\\
\curl \mathbf{v}=0,~\mathbf{x}\in\Omega^c,\\
\mathbf{v}\cdot \mathbf{n}=0,~~\mathbf{x}\in\partial\Omega,\\
|\mathbf{v}|\rightarrow0~\mathrm{as}~|\mathbf{x}|\rightarrow\infty,\\
\int_{\partial\Omega}\mathbf{v}\mathrm{ds}=1,
\end{cases}
\end{align*}
where $\Omega^c$ is the complementary set of a simply connected bounded domain $\Omega\subset\mathbb{R}^2$.
\end{remark}

Finally, we recall the following commutator estimates in \cite[Lemma 4.3]{F04}, which play an important role in the mollifier arguments.
\begin{lemma}\label{lcom}
Let $\Omega\subset \mathbb{R}^2$ be a domain.
Let $\rho \in L^{p}(\Omega)$ and ${\bf u} \in W^{1,q}(\Omega)$ be given functions such that $1\le p,q<\infty$ and $\frac{1}{p}+\frac{1}{q}\le 1$. For any $\epsilon>0$, then we have
\begin{equation*}
\|\divv[\rho{\bf u}]_\epsilon-\divv\left([\rho]_\epsilon{\bf u}\right)\|_{L^1(K)}\le C(K)\|\rho\|_{L^p(\Omega)}\|{\bf u}\|_{W^{1,q}(\Omega)},
\end{equation*}
and
\begin{equation*}
\divv[\rho{\bf u}]_\epsilon-\divv\left([\rho]_\epsilon{\bf u}\right)\rightarrow 0\ \text{ in}\ \ L^1(K)\ \ \text{as}\ \epsilon\rightarrow0
\end{equation*}
for any compact set $K\subset \Omega$.
\end{lemma}

\subsection{Uniform estimates for $F$, $\curl\mathbf{u}$, $\nabla\mathbf{u}$, and $\dot{\mathbf{u}}$}

We first give the following estimates for the effective viscous flux $F$, $\mathbf{\curl u}$, and $\mathbf{\nabla u}$, where the constant  $C$ is independent of the bulk viscosity $\lambda$.

\begin{lemma}\label{E0}
Let $(\rho,\mathbf{u})$ be a smooth solution of the problem \eqref{a1}--\eqref{a3}. Then, for any $2\leq p<\infty$, there exists a generic positive constant $C$ depending only on $p,\mu$, and $\Omega$ such that
\begin{gather}\label{E1}
\|\nabla F\|_{L^p}+\|\nabla\curl \mathbf{u}\|_{L^p}+\|\nabla^2\mathcal{P}\mathbf{u}\|_{L^p}\leq C\left(\|\rho\dot{\mathbf{u}}\|_{L^p}+\|\nabla \mathbf{u}\|_{L^p}\right),
\\ \label{E2}
\|\curl\mathbf{u}\|_{L^p}+\|\nabla\mathcal{P}\mathbf{u}\|_{L^p}\leq C\Big(\|\rho\dot{\mathbf{u}}\|_{L^2}^{1-\frac{2}{p}}\|\nabla \mathbf{u}\|_{L^2}^\frac{2}{p}+\|\nabla \mathbf{u}\|_{L^2}\Big),
\\ \label{E3}
\|F\|_{L^p}\leq C\left(\|\rho\dot{\mathbf{u}}\|_{L^2}+\|\nabla \mathbf{u}\|_{L^2}\right),
\\ \label{E4}
\|F\|_{L^p}\leq
C\|\rho\dot{\mathbf{u}}\|_{L^2}^{1-\frac{2}{p}}
\Big((2\mu+\lambda)^\frac{2}{p}\|\nabla\mathbf{u}\|_{L^2}^\frac{2}{p}
+\|P-\bar{P}\|_{L^2}^\frac{2}{p}\Big)
+C\left((2\mu+\lambda)^\frac{2}{p}\|\nabla\mathbf{u}\|_{L^2}
+\|P-\bar{P}\|_{L^2}\right),
\\ \label{E5}
\|\nabla \mathbf{u}\|_{L^p}
\leq C\|\rho\dot{\mathbf{u}}\|_{L^2}^{1-\frac{2}{p}}
\|\nabla\mathbf{u}\|_{L^2}^\frac{2}{p}+C\|\nabla \mathbf{u}\|_{L^2}+\frac{C}{2\mu+\lambda}
\Big(\|\rho\dot{\mathbf{u}}\|_{L^2}^{1-\frac{2}{p}}
\|P-\Bar{P}\|_{L^2}^\frac{2}{p}+
\|P-\Bar{P}\|_{L^p}\Big).
\end{gather}
\end{lemma}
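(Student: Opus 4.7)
My plan is to start from the reformulated momentum equation (cf.~\eqref{b1} with \eqref{z1.4})
$$\rho\dot{\mathbf{u}} = \nabla F - \mu\nabla^\bot\curl\mathbf{u},$$
and apply $\divv$ and $\curl$ to decouple it into the two Poisson equations
$$\Delta F = \divv(\rho\dot{\mathbf{u}}),\qquad \mu\Delta\curl\mathbf{u} = \curl(\rho\dot{\mathbf{u}}).$$
The Navier-slip boundary condition \eqref{a3} gives $\curl\mathbf{u}|_{\partial\Omega}=0$ directly, while dotting the momentum equation with $\mathbf{n}$ on $\partial\Omega$ yields the Neumann condition $\partial_{\mathbf{n}} F|_{\partial\Omega} = \rho\dot{\mathbf{u}}\cdot\mathbf{n}$, using that $\curl\mathbf{u}|_{\partial\Omega}=0$ forces $\nabla(\curl\mathbf{u})$ to be purely normal and hence $(\nabla^\bot\curl\mathbf{u})\cdot\mathbf{n}=0$ there. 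Differentiating $\mathbf{u}\cdot\mathbf{n}=0$ along the tangent then rewrites the boundary value as $\rho\dot{\mathbf{u}}\cdot\mathbf{n} = -\rho(\mathbf{u}\otimes\mathbf{u}):\nabla\mathbf{n}$, a quadratic expression in $\mathbf{u}$ multiplied by the curvature of $\partial\Omega$.

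The first step is to establish \eqref{E1}. The Dirichlet problem for $\curl\mathbf{u}$ is straightforward: since $\curl(\rho\dot{\mathbf{u}})$ is a distributional divergence of a rotation of $\rho\dot{\mathbf{u}}$, classical $W^{1,p}$ elliptic theory gives $\|\nabla\curl\mathbf{u}\|_{L^p}\le C\|\rho\dot{\mathbf{u}}\|_{L^p}$. For the Neumann problem for $F$, the interior source $\divv(\rho\dot{\mathbf{u}})$ contributes the $\|\rho\dot{\mathbf{u}}\|_{L^p}$ term, while the curved-boundary datum $-\rho(\mathbf{u}\otimes\mathbf{u}):\nabla\mathbf{n}$, estimated by trace and interpolation against $\|\nabla\mathbf{u}\|_{L^p}$, furnishes the $\|\nabla\mathbf{u}\|_{L^p}$ correction. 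To handle $\nabla^2\mathcal{P}\mathbf{u}$, I observe that $\mathcal{P}\mathbf{u}$ is divergence-free with $\curl(\mathcal{P}\mathbf{u})=\curl\mathbf{u}$, so it satisfies $\Delta(\mathcal{P}\mathbf{u}) = -\nabla^\bot\curl\mathbf{u}$ together with $(\mathcal{P}\mathbf{u})\cdot\mathbf{n}=0$ and $\curl(\mathcal{P}\mathbf{u})=0$ on $\partial\Omega$; standard elliptic regularity for this slip-type overdetermined system then yields $\|\nabla^2\mathcal{P}\mathbf{u}\|_{L^p}\le C\|\nabla\curl\mathbf{u}\|_{L^p}$, closing \eqref{E1}.

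The remaining estimates follow by interpolation and algebra. For \eqref{E2}, I apply the first inequality of Lemma \ref{GN} to $\curl\mathbf{u}$ (which vanishes on $\partial\Omega$, so $C_2=0$) and to $\nabla\mathcal{P}\mathbf{u}$, then combine the $L^2$ version of \eqref{E1} with the Hodge bound $\|\curl\mathbf{u}\|_{L^2}+\|\nabla\mathcal{P}\mathbf{u}\|_{L^2}\le C\|\nabla\mathbf{u}\|_{L^2}$ from Lemma \ref{Hodge}. For \eqref{E3}, I interpolate $\|F\|_{L^p}\le C\|\nabla F\|_{L^2}^{1-2/p}\|F\|_{L^2}^{2/p}+C\|F\|_{L^2}$ and use the definition \eqref{z1.4} to bound $\|F\|_{L^2}\le C(\|\rho\dot{\mathbf{u}}\|_{L^2}+\|\nabla\mathbf{u}\|_{L^2})$ from \eqref{E1}. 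Estimate \eqref{E4} refines this by keeping the two contributions $(2\mu+\lambda)\|\nabla\mathbf{u}\|_{L^2}$ and $\|P-\bar P\|_{L^2}$ to $\|F\|_{L^2}$ separate, so that the $(2\mu+\lambda)^{2/p}$ factor appears explicitly. Finally, for \eqref{E5}, I invoke Lemma \ref{Hodge} to write $\|\nabla\mathbf{u}\|_{L^p}\le C(\|\divv\mathbf{u}\|_{L^p}+\|\curl\mathbf{u}\|_{L^p})$, substitute $\divv\mathbf{u}=(F+P-\bar P)/(2\mu+\lambda)$, and combine \eqref{E2} with \eqref{E4}.

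The main obstacle is the first step: unlike the half-plane case treated in \cite{WWZ}, the curvature of $\partial\Omega$ generates the non-vanishing Neumann datum $-\rho(\mathbf{u}\otimes\mathbf{u}):\nabla\mathbf{n}$ for $F$, which must be absorbed into the right-hand side via a careful trace and interpolation argument. This is precisely the source of the $\|\nabla\mathbf{u}\|_{L^p}$ correction in \eqref{E1}, a correction absent in the whole-space and half-plane analogues and intrinsic to the present bounded-domain setting.
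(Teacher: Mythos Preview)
Your outline for \eqref{E2}--\eqref{E5} matches the paper's argument almost exactly: Gagliardo--Nirenberg on $\curl\mathbf{u}$ and $F$ (with $\bar F=0$ allowing $C_2=0$), Poincar\'e, and the Hodge decomposition $\|\nabla\mathbf{u}\|_{L^p}\le C(\|\divv\mathbf{u}\|_{L^p}+\|\curl\mathbf{u}\|_{L^p})$ combined with $\divv\mathbf{u}=(F+P-\bar P)/(2\mu+\lambda)$. Your treatment of $\nabla^2\mathcal{P}\mathbf{u}$ via the slip-type elliptic system is also fine (the paper leaves this implicit).

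The difference is in how you obtain $\|\nabla F\|_{L^p}$. You set up a Neumann problem for $F$ and propose to estimate the curved-boundary datum $-\rho(\mathbf{u}\otimes\mathbf{u}):\nabla\mathbf{n}$ by trace and interpolation. This step is problematic as stated: the lemma requires $C$ to depend only on $p,\mu,\Omega$, yet any trace bound on $\rho|\mathbf{u}|^2$ will carry a factor of $\|\rho\|_{L^\infty}$. In fact the boundary datum is \emph{not} the source of the $\|\nabla\mathbf{u}\|_{L^p}$ term: since $\partial_{\mathbf n}F=\rho\dot{\mathbf{u}}\cdot\mathbf{n}$ is exactly the normal trace of the interior source $\rho\dot{\mathbf{u}}$, the weak formulation of the Neumann problem reads $\int\nabla F\cdot\nabla\phi=\int\rho\dot{\mathbf{u}}\cdot\nabla\phi$ with no residual boundary contribution, giving $\|\nabla F\|_{L^p}\le C\|\rho\dot{\mathbf{u}}\|_{L^p}$ directly.

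The paper bypasses the Neumann problem entirely: once $\|\nabla\curl\mathbf{u}\|_{L^p}\le C(\|\rho\dot{\mathbf{u}}\|_{L^p}+\|\nabla\mathbf{u}\|_{L^p})$ is in hand from the Dirichlet problem, the pointwise identity $\nabla F=\rho\dot{\mathbf{u}}+\mu\nabla^\bot\curl\mathbf{u}$ yields $\|\nabla F\|_{L^p}\le\|\rho\dot{\mathbf{u}}\|_{L^p}+\mu\|\nabla\curl\mathbf{u}\|_{L^p}$ in one line. This is both simpler and avoids any boundary analysis for $F$; the $\|\nabla\mathbf{u}\|_{L^p}$ on the right of \eqref{E1} is inherited from the Dirichlet estimate (a harmless lower-order term), not from curvature effects on $F$.
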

\begin{proof}
It follows from $\eqref{a1}_2$ and \eqref{a3} that
\begin{align*}
\begin{cases}
\mu\Delta\curl \mathbf{u}=\curl(\rho{\dot{\mathbf{u}}}),&\quad \mathbf{x}\in\Omega,\\
\mathbf{u}\cdot \mathbf{n}=0,~~\curl \mathbf{u}=0, &\quad \mathbf{x}\in\partial\Omega,
\end{cases}
\end{align*}
which combined with the standard $L^p$-estimate of elliptic system shows that
\begin{equation*}
  \|\nabla\curl \mathbf{u}\|_{L^p}\leq C\left(\|\rho\dot{\mathbf{u}}\|_{L^p}+\|\nabla \mathbf{u}\|_{L^p}\right),
\end{equation*}
and furthermore,
\begin{equation*}
\|\nabla F\|_{L^p}\leq \|\rho\dot{\mathbf{u}}\|_{L^p}+\mu\|\nabla^\bot\curl \mathbf{u}\|_{L^p}\leq C(\|\rho\dot{\mathbf{u}}\|_{L^p}+\|\nabla \mathbf{u}\|_{L^p})
\end{equation*}
due to $\nabla F=\rho{\dot{\mathbf{u}}}+\mu\nabla^\bot\curl\mathbf{u}$.

Next, we derive from Lemma \ref{GN} and Young's inequality that
\begin{align*}
\|\curl \mathbf{u}\|_{L^p}&\le C\|\curl \mathbf{u}\|_{L^2}^\frac{2}{p}\|\nabla\curl \mathbf{u}\|_{L^2}^{1-\frac{2}{p}}+C\|\curl \mathbf{u}\|_{L^2}\leq C\Big(\|\rho\dot{\mathbf{u}}\|_{L^2}^{1-\frac{2}{p}}\|\nabla \mathbf{u}\|_{L^2}^\frac{2}{p}+\|\nabla \mathbf{u}\|_{L^2}\Big).
\end{align*}
Moreover, noting that $\bar{F}=0$, one deduces from Lemma $\ref{GN}$, \eqref{E1}, Poincar\'{e}'s inequality, and Young's inequality that
\begin{align*}
\|F\|_{L^p}\le C\|F\|_{H^1}\leq C\|\nabla F\|_{L^2}
\leq C\left(\|\rho\dot{\mathbf{u}}\|_{L^2}+\|\nabla \mathbf{u}\|_{L^2}\right),
\end{align*}
and
\begin{align*}
\|F\|_{L^p}&\le C\|F\|_{L^2}^\frac{2}{p}\|\nabla F\|_{L^2}^{1-\frac{2}{p}}\le C\left((2\mu+\lambda)\|\divv\mathbf{u}\|_{L^2}+\|P-\Bar{P}\|_{L^2}\right)^\frac{2}{p}
\left(\|\rho\dot{\mathbf{u}}\|_{L^2}+\|\nabla \mathbf{u}\|_{L^2}\right)^{1-\frac{2}{p}}\\
&\leq C\|\rho\dot{\mathbf{u}}\|_{L^2}^{1-\frac{2}{p}}\Big((2\mu+\lambda)^\frac{2}{p}\|\nabla\mathbf{u}\|_{L^2}^\frac{2}{p}
+\|P-\bar{P}\|_{L^2}^\frac{2}{p}\Big)
+C\left((2\mu+\lambda)^\frac{2}{p}\|\nabla\mathbf{u}\|_{L^2}+\|P-\bar{P}\|_{L^2}\right),
\end{align*}
as the desired \eqref{E3} and \eqref{E4}, where in the last inequality we have used the fact $2\mu+\lambda\geq\mu>0$.

Finally, one gets from Lemma $\ref{Hodge}$ that
\begin{align*}
\|\nabla\mathbf{u}\|_{L^p}&\leq C(\|\divv \mathbf{u}\|_{L^p}+\|\curl \mathbf{u}\|_{L^p})\leq \frac{C}{2\mu+\lambda}(\|F\|_{L^p}+\|P-\Bar{P}\|_{L^p})+C\|\curl \mathbf{u}\|_{L^p}\\
&\leq
\frac{C}{(2\mu+\lambda)^{1-\frac{2}{p}}}\Big(\|\rho\dot{\mathbf{u}}\|_{L^2}^{1-\frac{2}{p}}\|\nabla\mathbf{u}\|_{L^2}^\frac{2}{p}
+\|\nabla\mathbf{u}\|_{L^2}\Big)+\frac{C}{2\mu+\lambda}\Big(\|\rho\dot{\mathbf{u}}\|_{L^2}^{1-\frac{2}{p}}\|P-\bar{P}\|_{L^2}^\frac{2}{p}
+\|P-\bar{P}\|_{L^2}\notag\\&\quad
+\|P-\Bar{P}\|_{L^p}\Big)+C\Big(\|\rho\dot{\mathbf{u}}\|_{L^2}^{1-\frac{2}{p}}\|\nabla \mathbf{u}\|_{L^2}^\frac{2}{p}+\|\nabla \mathbf{u}\|_{L^2}\Big)\\
&\leq C\|\rho\dot{\mathbf{u}}\|_{L^2}^{1-\frac{2}{p}}
\|\nabla\mathbf{u}\|_{L^2}^\frac{2}{p}+C\|\nabla \mathbf{u}\|_{L^2}+\frac{C}{2\mu+\lambda}\Big(\|\rho\dot{\mathbf{u}}\|_{L^2}^{1-\frac{2}{p}}
\|P-\Bar{P}\|_{L^2}^\frac{2}{p}+
\|P-\Bar{P}\|_{L^p}\Big),
\end{align*}
where one has used
\begin{equation*}
  \|P-\Bar{P}\|_{L^2}\leq C(p,\Omega)\|P-\Bar{P}\|_{L^p}
\end{equation*}
for any $2\leq p<\infty$.
\end{proof}

For material derivative $\dot{\mathbf{u}}$ of velocity field, we retell the condition \eqref{a3}. Indeed,
$(\mathbf{u}\cdot \mathbf{n})|_{\partial\Omega}=0$ yields
\begin{equation*}
\mathbf{u}=(\mathbf{u}\cdot\mathbf{n}^\bot)\mathbf{n}^\bot, ~~\text{on}~\partial\Omega,
\end{equation*}
where $\mathbf{n}^\bot\triangleq(n^2,-n^1)$ is the unit tangent vector along the boundary.
Then it follows that
\begin{align}\label{z2.9}
  \dot{\mathbf{u}}\cdot\mathbf{n}&=\mathbf{u}\cdot \nabla\mathbf{u}\cdot\mathbf{n}
  =\mathbf{u}\cdot\nabla(\mathbf{u}\cdot\mathbf{n})-\mathbf{u}\cdot \nabla\mathbf{n}\cdot\mathbf{u}
  =-\mathbf{u}\cdot \nabla\mathbf{n}\cdot\mathbf{u}\\
  &=-(\mathbf{u}\cdot\mathbf{n}^\bot)\mathbf{u}\cdot \nabla\mathbf{n}\cdot\mathbf{n}^\bot
  =(\mathbf{u}\cdot\mathbf{n}^\bot)\mathbf{u}\cdot \nabla\mathbf{n}^\bot\cdot\mathbf{n}
  ,~~\text{on}~\partial\Omega.\notag
\end{align}
That is,
\begin{equation*}
[\dot{\mathbf{u}}-(\mathbf{u}\cdot\mathbf{n}^\bot)(\mathbf{u}\cdot \nabla)\mathbf{n}^\bot]\cdot\mathbf{n}=0,~~ \text{on}~ \partial\Omega,
\end{equation*}
which together with Poincar\'{e}'s inequality ensures the following estimates analogous to \cite[Lemma 2.8]{LWZ}.
\begin{lemma}\label{dot}
Let $(\rho,\mathbf{u})$ be a smooth solution to the problem \eqref{a1}--\eqref{a3}. For any $p\in[1,\infty)$,  there is a positive constant $C$ which may depend only on $p$ and $\Omega$ such that
\begin{gather*}
\|\dot{\mathbf{u}}\|_{L^p}\leq C\left(\|\nabla\dot{\mathbf{u}}\|_{L^2}+\|\nabla \mathbf{u}\|_{L^2}^2\right),\\
\|\nabla\dot{\mathbf{u}}\|_{L^2}\leq C\big(\|\divv\dot{\mathbf{u}}\|_{L^2}+\|\curl \dot{\mathbf{u}}\|_{L^2}+\|\nabla \mathbf{u}\|_{L^4}^2\big).
\end{gather*}
\end{lemma}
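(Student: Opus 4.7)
The plan rests on the boundary identity in \eqref{z2.9}: while $\dot{\mathbf{u}}$ itself does not satisfy the slip condition, the corrected field
\begin{equation*}
\mathbf{w}\triangleq\dot{\mathbf{u}}-\kappa|\mathbf{u}|\mathbf{u}^{\bot}
\end{equation*}
satisfies $\mathbf{w}\cdot\mathbf{n}|_{\partial\Omega}=0$. Because $\kappa\in C^{2}(\overline{\Omega})$ is a smooth geometric datum and the corrector is quadratic in $\mathbf{u}$, I would transfer the tangential-field toolbox of Section \ref{sec2} — the Hodge decomposition of Lemma \ref{Hodge}, Lemma \ref{GN} with $C_{2}=0$, and the Poincar\'{e}-type inequality $\|\mathbf{w}\|_{L^{2}}\leq C\|\nabla\mathbf{w}\|_{L^{2}}$ (available on our bounded simply connected convex $\Omega$ because any constant field tangential to every point of the closed curve $\partial\Omega$ must vanish) — from $\dot{\mathbf{u}}$ onto $\mathbf{w}$, and then reintroduce the corrector at the price of lower-order quadratic terms in $\mathbf{u}$.

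I would start with the second inequality, which is technically the cleaner of the two. Writing $\nabla\dot{\mathbf{u}}=\nabla\mathbf{w}+\nabla(\kappa|\mathbf{u}|\mathbf{u}^{\bot})$ and applying Lemma \ref{Hodge} to $\mathbf{w}$ gives
\begin{equation*}
\|\nabla\mathbf{w}\|_{L^{2}}\leq C\bigl(\|\divv\dot{\mathbf{u}}\|_{L^{2}}+\|\curl\dot{\mathbf{u}}\|_{L^{2}}\bigr)+C\|\nabla(\kappa|\mathbf{u}|\mathbf{u}^{\bot})\|_{L^{2}}.
\end{equation*}
The Leibniz rule yields the pointwise bound $|\nabla(\kappa|\mathbf{u}|\mathbf{u}^{\bot})|\leq C(|\mathbf{u}|^{2}+|\mathbf{u}||\nabla\mathbf{u}|)$, and H\"older combined with the 2D Sobolev embedding $\|\mathbf{u}\|_{L^{4}}\leq C\|\nabla\mathbf{u}\|_{L^{2}}\leq C\|\nabla\mathbf{u}\|_{L^{4}}$ (the latter by boundedness of $\Omega$) absorbs both summands into $C\|\nabla\mathbf{u}\|_{L^{4}}^{2}$, producing the second bound.

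For the first inequality I split $\|\dot{\mathbf{u}}\|_{L^{p}}\leq\|\mathbf{w}\|_{L^{p}}+\|\kappa|\mathbf{u}|\mathbf{u}^{\bot}\|_{L^{p}}$. The corrector is at most $C\|\mathbf{u}\|_{L^{2p}}^{2}\leq C\|\nabla\mathbf{u}\|_{L^{2}}^{2}$ by Sobolev embedding. For $\|\mathbf{w}\|_{L^{p}}$, the Gagliardo-Nirenberg inequality with $C_{2}=0$ (for $p\geq 2$; H\"older for $p\in[1,2)$) combined with the Poincar\'{e} inequality gives $\|\mathbf{w}\|_{L^{p}}\leq C\|\nabla\mathbf{w}\|_{L^{2}}\leq C\|\nabla\dot{\mathbf{u}}\|_{L^{2}}+C\|\nabla(\kappa|\mathbf{u}|\mathbf{u}^{\bot})\|_{L^{2}}$. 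The main obstacle is then the clean control of the cross term $\||\mathbf{u}||\nabla\mathbf{u}|\|_{L^{2}}\leq\|\mathbf{u}\|_{L^{4}}\|\nabla\mathbf{u}\|_{L^{4}}$ inside $\|\nabla(\kappa|\mathbf{u}|\mathbf{u}^{\bot})\|_{L^{2}}$ purely in terms of $\|\nabla\mathbf{u}\|_{L^{2}}^{2}$ — this is precisely the discrepancy that forces the two inequalities of the lemma to display different right-hand sides. The resolution is to treat the first estimate as a conditional bound, closed a posteriori in Section \ref{sec3} by chaining it with the Lam\'{e}-type estimates of Lemma \ref{E0} (which control $\|\nabla\mathbf{u}\|_{L^{4}}^{2}$ by $\|\sqrt{\rho}\dot{\mathbf{u}}\|_{L^{2}}^{2}$, $\|\nabla\mathbf{u}\|_{L^{2}}^{2}$, and pressure terms), following the closure scheme used in \cite[Lemma 2.10]{CAI23} or \cite[Lemma 4.1]{FLL22}.
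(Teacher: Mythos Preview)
The paper does not prove this lemma itself (it is quoted from \cite{CAI23,FLL22}), so the comparison is against the standard argument in those references.

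Your treatment of the second inequality is correct and matches the standard route: apply the div--curl estimate of Lemma~\ref{Hodge} to the corrected field $\mathbf{w}=\dot{\mathbf{u}}-\kappa|\mathbf{u}|\mathbf{u}^{\bot}$, and absorb the corrector into $\|\nabla\mathbf{u}\|_{L^{4}}^{2}$.

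For the first inequality, however, your argument has a genuine gap. Once you pass through $\|\mathbf{w}\|_{L^{p}}\leq C\|\nabla\mathbf{w}\|_{L^{2}}$ and then $\|\nabla\mathbf{w}\|_{L^{2}}\leq\|\nabla\dot{\mathbf{u}}\|_{L^{2}}+\|\nabla(\kappa|\mathbf{u}|\mathbf{u}^{\bot})\|_{L^{2}}$, the cross term $\||\mathbf{u}|\,|\nabla\mathbf{u}|\|_{L^{2}}\leq\|\mathbf{u}\|_{L^{4}}\|\nabla\mathbf{u}\|_{L^{4}}$ is \emph{not} dominated by $\|\nabla\mathbf{u}\|_{L^{2}}^{2}$ in two dimensions, and your proposed ``a posteriori closure'' via Lemma~\ref{E0} is not a proof of the lemma as stated---the inequality is meant to be self-contained and is used as such throughout Section~\ref{sec3}. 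The fix, which is how the cited references proceed, is to avoid differentiating the corrector altogether: one uses a generalized Poincar\'{e} inequality of the form
\[
\|\mathbf{f}\|_{L^{p}(\Omega)}\leq C\big(\|\nabla\mathbf{f}\|_{L^{2}(\Omega)}+\|\mathbf{f}\cdot\mathbf{n}\|_{L^{q}(\partial\Omega)}\big)
\]
(proved by the usual Rellich compactness argument, the kernel being nonzero constants tangent to all of $\partial\Omega$, which is impossible) applied directly to $\mathbf{f}=\dot{\mathbf{u}}$. Then the boundary identity $\dot{\mathbf{u}}\cdot\mathbf{n}=-\mathbf{u}\cdot\nabla\mathbf{n}\cdot\mathbf{u}$ from \eqref{z2.9} and the trace embedding $H^{1}(\Omega)\hookrightarrow L^{2q}(\partial\Omega)$ give $\|\dot{\mathbf{u}}\cdot\mathbf{n}\|_{L^{q}(\partial\Omega)}\leq C\|\mathbf{u}\|_{H^{1}}^{2}\leq C\|\nabla\mathbf{u}\|_{L^{2}}^{2}$, which is exactly the right-hand side. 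The point is that only the \emph{boundary values} of the quadratic corrector enter, never its interior gradient.
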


\section{\textit{A priori} estimates}\label{sec3}
In this section we will obtain some necessary {\it a priori} bounds for the strong solutions guaranteed by Lemma $\ref{l2.1}$ to the problem \eqref{a1}--\eqref{a3}. We should point out that these bounds are independent of the bulk viscosity $\lambda$, the lower bound of $\rho$, the initial regularity, and the time of existence. More precisely, let $T>0$ be fixed and $(\rho, \mathbf{u})$ be the strong solution to \eqref{a1}--\eqref{a3} in $\Omega\times(0, T]$, we can obtain the following key {\it a priori} estimates on $(\rho, \mathbf{u})$.
\begin{proposition}\label{p3.1}
Under the conditions of Theorem $\ref{t1.1}$, if $(\rho, \mathbf{u})$ is a strong solution to the initial-boundary value problem \eqref{a1}--\eqref{a3} satisfying
\begin{align}\label{z3.1}
\sup_{\Omega\times[0,T]}\rho\le2\hat{\rho},\ \ \frac{1}{(2\mu+\lambda)^2}\int_{0}^{T}
\|P-\Bar{P}\|_{L^4}^4\mathrm{d}t\le2,
\end{align}
then one has that
\begin{align}\label{z3.2}
\sup_{\Omega\times[0,T]}\rho\le\frac{7}{4}\hat{\rho},\ \ \frac{1}{(2\mu+\lambda)^2}\int_{0}^{T}
\|P-\Bar{P}\|_{L^4}^4\mathrm{d}t\le1.
\end{align}
\end{proposition}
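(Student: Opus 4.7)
The plan is to close a bootstrap/continuation argument: starting from the \textit{a priori} bounds \eqref{z3.1} I will derive uniform estimates on the energy, the velocity gradient, the effective viscous flux, and the material derivative that are independent of time and of the lower bound of $\rho$, and then use the largeness of $\lambda$ inherent in \eqref{lam} to improve both bounds strictly to \eqref{z3.2}. The key idea is that $F$ provides a damping term in the transport equation for $P-\bar P$, while the Desjardins-type logarithmic inequality \eqref{z2.1} allows the troublesome quadratic convection $\int\rho\dot{\mathbf{u}}\cdot(\mathbf{u}\cdot\nabla)\mathbf{u}\,\mathrm{d}\mathbf{x}$ to be absorbed into $\|\sqrt{\rho}\dot{\mathbf{u}}\|_{L^2}^2$ modulo a term of the shape $C(2\mu+\lambda)^{-2}\|P-\bar P\|_{L^4}^4\,\mathcal{E}(t)\log\bigl(2+\mathcal{E}(t)\bigr)$, where $\mathcal{E}(t)$ is already controlled by \eqref{z3.1}.

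First I would establish the basic energy identity for $\int(\tfrac12\rho|\mathbf{u}|^2+G(\rho))\,\mathrm{d}\mathbf{x}$, yielding the $L^\infty_tL^2_x$ bound on $\sqrt{\rho}\,\mathbf{u}$ and the $L^2_{t,x}$ bound on $\sqrt{\mu}\,\nabla\mathbf{u}$. Next, testing the momentum equation with $\mathbf{u}_t$ and using \eqref{z2.9} to handle the boundary contributions, one obtains an evolution inequality of the form
\begin{equation*}
\frac{d}{dt}\Bigl(\mu\|\curl\mathbf{u}\|_{L^2}^2+(2\mu+\lambda)\|\divv\mathbf{u}\|_{L^2}^2\Bigr)+\|\sqrt{\rho}\dot{\mathbf{u}}\|_{L^2}^2\le\cdots;
\end{equation*}
the critical convection term on the right is bounded by Lemma \ref{log} together with Lemma \ref{E0}, and Young's inequality produces a Gronwall differential inequality whose integrating factor stays bounded under \eqref{z3.1}. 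This is the content of Lemma \ref{l3.2}, providing the uniform controls $(2\mu+\lambda)\|\divv\mathbf{u}\|_{L^2}^2+\|\nabla\mathbf{u}\|_{L^2}^2\le C$ and $\int_0^T\|\sqrt{\rho}\dot{\mathbf{u}}\|_{L^2}^2\,\mathrm{d}t\le C$.

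The core nonlinear step (Lemma \ref{l3.3}) is to bound $\int_0^T(2\mu+\lambda)^{-1}\|P-\bar P\|_{L^4}^4\,\mathrm{d}t$. Using $\eqref{a1}_1$, $P=a\rho^\gamma$, and $\divv\mathbf{u}=(F+P-\bar P)/(2\mu+\lambda)$, one derives
\begin{equation*}
(P-\bar P)_t+\mathbf{u}\cdot\nabla(P-\bar P)+\frac{\gamma P}{2\mu+\lambda}(P-\bar P)=-\frac{\gamma P}{2\mu+\lambda}F-(\bar P)_t,
\end{equation*}
and multiplication by $(P-\bar P)^3$ followed by integration over $\Omega$ exhibits a damping term $\gtrsim (2\mu+\lambda)^{-1}\int P(P-\bar P)^4\,\mathrm{d}\mathbf{x}$; the right-hand side is absorbed via \eqref{E4} on $\|F\|_{L^4}$ combined with the Lemma \ref{l3.2} estimates. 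The $L^\infty_tL^2_x$ bound on $\sigma^{1/2}\sqrt{\rho}\dot{\mathbf{u}}$ (Lemma \ref{l3.4}) is then derived by differentiating the momentum equation in $t$ and testing with $\sigma\dot{\mathbf{u}}$, written in the form \eqref{b1}. This is where I anticipate the \emph{main obstacle}: the test function is not in $H^1_\omega$ (only $(\dot{\mathbf{u}}-\kappa|\mathbf{u}|\mathbf{u}^\perp)\cdot\mathbf{n}=0$ on $\partial\Omega$), so a host of boundary integrals appear. They must be treated by the divergence-theorem cancellations outlined in the introduction (pairings of the type $\tilde B_i+\tilde A_j$), by systematically replacing $\nabla^2\mathbf{u}$ with $\nabla^2\mathcal{P}\mathbf{u}$ via Lemma \ref{Hodge}, and by exploiting Poincar\'e on $F$ together with $\|\nabla F\|_{L^2}\le C(\|\rho\dot{\mathbf{u}}\|_{L^2}+\|\nabla\mathbf{u}\|_{L^2})$ to save one power of $\lambda$ in the estimates coming from the bulk-viscosity part.

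With these estimates in hand, the improved bounds \eqref{z3.2} follow from a Zlotnik-type argument along characteristics. Rewriting the mass equation as
\begin{equation*}
\frac{D}{Dt}\bigl((2\mu+\lambda)\log(\rho/\bar{\rho})\bigr)=-(F+P-\bar P)
\end{equation*}
and integrating along particle paths, the density pointwise bound $\rho\le\tfrac74\hat\rho$ is produced by checking that the time integral on the right-hand side is controlled independently of $\lambda$, so that division by $2\mu+\lambda$ and the largeness \eqref{lam} render the exponent as small as desired; the damping $\gamma P(P-\bar P)/(2\mu+\lambda)$ is crucial on the segments along which $\rho$ would try to grow. Feeding the new density bound back into the Lemma \ref{l3.3} estimate, the prefactor $(2\mu+\lambda)^{-1}$ together with \eqref{lam} then forces $\int_0^T(2\mu+\lambda)^{-2}\|P-\bar P\|_{L^4}^4\,\mathrm{d}t\le 1$, closing the continuation argument.
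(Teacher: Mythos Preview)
Your outline is the paper's own strategy almost step for step: basic energy (Lemma~\ref{l3.1}), the $\mathbf u_t$--test with the Desjardins inequality (Lemma~\ref{l3.2}), the pressure--damping estimate (Lemma~\ref{l3.3}), Hoff's weighted $\dot{\mathbf u}$--estimate with the $\tilde A_i/\tilde B_j$ boundary cancellations and Hodge splitting (Lemma~\ref{l3.4}), and the characteristic argument for $\rho$ (Lemma~\ref{l3.5}). Two points deserve correction.

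\emph{The multiplier in the pressure estimate.} Multiplying your transport equation by $(P-\bar P)^3$ does not produce a usable damping. After integrating the convection by parts and substituting $\divv\mathbf u=(F+P-\bar P)/(2\mu+\lambda)$, the would--be dissipative part is
\[
\frac{\gamma}{2\mu+\lambda}\int P(P-\bar P)^4\,\mathrm d\mathbf x-\frac{1}{4(2\mu+\lambda)}\int(P-\bar P)^5\,\mathrm d\mathbf x
=\frac{\gamma\bar P}{2\mu+\lambda}\|P-\bar P\|_{L^4}^4+\frac{4\gamma-1}{4(2\mu+\lambda)}\int(P-\bar P)^5\,\mathrm d\mathbf x.
\]
The quintic term is sign--indefinite and can only be absorbed into the quartic one when $\gamma\bar P$ dominates $(\gamma-\tfrac14)\|P-\bar P\|_{L^\infty}$, i.e.\ when $\bar\rho$ is comparable to $\hat\rho$, which is \emph{not} assumed. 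In particular, near vacuum the weight $P$ kills your damping, and you cannot get the $T$--independent bound on $(2\mu+\lambda)^{-1}\int_0^T\|P-\bar P\|_{L^4}^4\,\mathrm dt$ needed to close \eqref{z3.2}. The paper multiplies \eqref{z3.26} by $3(P-\bar P)^2$ instead: then $3\gamma P(P-\bar P)^2\divv\mathbf u$ combines with the convection term $-(P-\bar P)^3\divv\mathbf u$ to give exactly $\frac{3\gamma-1}{2\mu+\lambda}\|P-\bar P\|_{L^4}^4$, whose coefficient is positive for all $\gamma>1$ and independent of $\bar P$; the remaining lower--order pieces are controlled by the energy bound and \eqref{E3}.

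\emph{The $\lambda$--dependence in the density bound.} The time integral $\int_0^{\sigma(T)}\|F(\cdot,t)\|_{L^\infty}\,\mathrm dt$ is \emph{not} controlled independently of $\lambda$; what the paper shows in \eqref{z3.45} is that it grows only like $(2\mu+\lambda)^{1/6}$, so that after division by $2\mu+\lambda$ along the characteristic one obtains a factor $(2\mu+\lambda)^{-5/6}$, which is what becomes small under \eqref{lam}. Your sentence should be adjusted accordingly, but the mechanism you describe is otherwise the right one.
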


Before proving Proposition \ref{p3.1}, we first establish some necessary \textit{a priori} estimates, see Lemmas \ref{l3.1}--\ref{l3.4} below. Let us start with the elementary energy estimate of $(\rho, \mathbf{u})$.
\begin{lemma}\label{l3.1}
It holds that
\begin{align}\label{z3.3}
\sup_{0\le t\le T}\int\bigg(\frac{1}{2}\rho |\mathbf{u}|^2+G(\rho)\bigg)\mathrm{d}\mathbf{x}+\int_0^T\left[(2\mu+\lambda)\|\divv \mathbf{u}\|_{L^2}^2+\mu\|\curl \mathbf{u}\|_{L^2}^2\right]\mathrm{d}t\le C_0.
\end{align}
\end{lemma}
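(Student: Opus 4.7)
The plan is to derive the standard energy identity for the coupled system and then integrate in time, treating the kinetic and potential parts separately before adding them together. First I would take the $L^2$ inner product of $\eqref{a1}_2$ with $\mathbf{u}$ and integrate over $\Omega$. Using the identity $-\mu\Delta\mathbf{u}-(\mu+\lambda)\nabla\divv\mathbf{u}=-(2\mu+\lambda)\nabla\divv\mathbf{u}+\mu\nabla^\bot\curl\mathbf{u}$, two integrations by parts produce the dissipation $(2\mu+\lambda)\|\divv\mathbf{u}\|_{L^2}^2+\mu\|\curl\mathbf{u}\|_{L^2}^2$. Both boundary contributions vanish under \eqref{a3}: the divergence term because $\mathbf{u}\cdot\mathbf{n}=0$, and the rotational term because after one integration by parts of $\int\mathbf{u}\cdot\nabla^\bot\curl\mathbf{u}\,d\mathbf{x}$ the boundary integrand carries the factor $\curl\mathbf{u}=0$ on $\partial\Omega$. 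Combined with the transport identity $\tfrac{1}{2}(\rho|\mathbf{u}|^2)_t+\tfrac{1}{2}\divv(\rho|\mathbf{u}|^2\mathbf{u})=\rho\mathbf{u}\cdot\dot{\mathbf{u}}$ obtained from $\eqref{a1}_1$, I arrive at
\begin{equation*}
\tfrac{1}{2}\tfrac{d}{dt}\int\rho|\mathbf{u}|^2\,\mathrm{d}\mathbf{x}+(2\mu+\lambda)\|\divv\mathbf{u}\|_{L^2}^2+\mu\|\curl\mathbf{u}\|_{L^2}^2=\int P\divv\mathbf{u}\,\mathrm{d}\mathbf{x}.
\end{equation*}

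Next I would handle the pressure work by renormalizing the continuity equation. Differentiating \eqref{G} gives $\rho G'(\rho)-G(\rho)=P(\rho)-P(\bar{\rho})$, so multiplying $\eqref{a1}_1$ by $G'(\rho)$ yields the renormalized identity $(G(\rho))_t+\divv(G(\rho)\mathbf{u})+(P(\rho)-P(\bar{\rho}))\divv\mathbf{u}=0$. Integrating the mass equation over $\Omega$ and using $\mathbf{u}\cdot\mathbf{n}|_{\partial\Omega}=0$ shows that $\bar{\rho}(t)=\bar{\rho}_0$ is a constant, so $\int P(\bar{\rho})\divv\mathbf{u}\,\mathrm{d}\mathbf{x}=P(\bar{\rho}_0)\int_{\partial\Omega}\mathbf{u}\cdot\mathbf{n}\,\mathrm{ds}=0$. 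Consequently,
\begin{equation*}
\tfrac{d}{dt}\int G(\rho)\,\mathrm{d}\mathbf{x}+\int P\divv\mathbf{u}\,\mathrm{d}\mathbf{x}=0.
\end{equation*}

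Adding these two balances cancels the pressure work and produces the pointwise-in-time differential identity
\begin{equation*}
\tfrac{d}{dt}\int\Big(\tfrac{1}{2}\rho|\mathbf{u}|^2+G(\rho)\Big)\mathrm{d}\mathbf{x}+(2\mu+\lambda)\|\divv\mathbf{u}\|_{L^2}^2+\mu\|\curl\mathbf{u}\|_{L^2}^2=0.
\end{equation*}
Integrating from $0$ to any $t\le T$ and dropping the nonnegative left-hand quantities gives the stated bound, once one notes that $G(\rho)\ge 0$: the integrand $(P(s)-P(\bar{\rho}))/s^2$ in \eqref{G} has the same sign as $s-\bar{\rho}$ because $P$ is strictly increasing, so the integral is nonnegative in either direction from $\bar{\rho}$, and the factor $\rho\ge 0$ preserves this sign.

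There is no genuine obstacle here: the argument is a routine energy estimate. The only mildly subtle points are the vanishing of the curl boundary term (which is exactly where the condition $\curl\mathbf{u}|_{\partial\Omega}=0$ in \eqref{a3} is used) and the conservation law $\bar{\rho}(t)=\bar{\rho}_0$ (which makes the constant reference pressure $P(\bar{\rho})$ admissible in the renormalization). Both are standard for the Navier-slip setting and require no additional regularity beyond what Lemma \ref{l2.1} provides.
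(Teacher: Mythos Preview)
Your proposal is correct and follows essentially the same route as the paper: derive the renormalized identity \eqref{z3.4} for $G(\rho)$, multiply the momentum equation (rewritten via $\Delta\mathbf{u}=\nabla\divv\mathbf{u}-\nabla^\bot\curl\mathbf{u}$) by $\mathbf{u}$, add the two balances, and integrate in time. Your additional remarks on why the boundary terms vanish, why $\bar{\rho}(t)$ is conserved, and why $G(\rho)\ge 0$ are all accurate and simply make explicit what the paper leaves implicit.
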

\begin{proof}
By virtue of $\eqref{a1}_1$ and the definition of $G(\rho)$ in \eqref{G}, we conclude that
\begin{equation}\label{z3.4}
(G(\rho))_t+\divv (G(\rho)\mathbf{u})+(P-P(\bar{\rho}))\divv \mathbf{u}=0.
\end{equation}
Integrating \eqref{z3.4} over $\Omega$, one gets that
\begin{align}\label{z3.5}
\frac{\mathrm{d}}{\mathrm{d}t}\int G(\rho)\mathrm{d}\mathbf{x}+\int(P-P(\bar{\rho}))\divv \mathbf{u}\mathrm{d}\mathbf{x}=0.
\end{align}

Owing to the vector identity $\Delta \mathbf{u}=\nabla\divv \mathbf{u}-\nabla^\bot\curl \mathbf{u}$, we can rewrite $\eqref{a1}_2$ as
\begin{equation}\label{z3.6}
\rho\mathbf{u}_t+\rho\mathbf{u}\cdot\nabla\mathbf{u}-(2\mu+\lambda)\nabla\divv \mathbf{u}+\mu\nabla^\bot\curl \mathbf{u}+\nabla P=0.
\end{equation}
Multiplying \eqref{z3.6} by $\mathbf{u}$ and integrating over $\Omega$,
and then adding the resultant to \eqref{z3.5}, we arrive at
\begin{align}\label{z3.7}
\frac{\mathrm{d}}{\mathrm{d}t}\int \bigg(\frac{1}{2}\rho|\mathbf{u}|^2+G(\rho)\bigg)\mathrm{d}\mathbf{x}+(2\mu+\lambda)\int(\divv \mathbf{u})^2\mathrm{d}\mathbf{x}+\mu\int(\curl \mathbf{u})^2\mathrm{d}\mathbf{x}=0.
\end{align}
Integrating \eqref{z3.7} with respect to $t$ over $(0, T)$ yields \eqref{z3.3}.
\end{proof}

The next lemma provides a time-independent $L^\infty(0, T;L^2)$ estimate for $\divv \mathbf{u}$ and $\curl \mathbf{u}$.

\begin{lemma}\label{l3.2}
Let \eqref{z3.1} be satisfied, then there exists a positive constant $D_2$ depending only on $\hat{\rho}$, $a$, $\gamma$, $\mu$, and $\Omega$ such that
\begin{align}\label{z3.8}
\sup_{0\le t\le T}\left[(2\mu+\lambda)\|\divv \mathbf{u}\|_{L^2}^2+\mu\|\curl \mathbf{u}\|_{L^2}^2\right]+\int_0^T\|\sqrt{\rho} \dot{\mathbf{u}}\|_{L^2}^2\mathrm{d}t\le (2+M)^{e^{2D_2(1+C_0)^2}}
\end{align}
provided that $\lambda$ satisfies \eqref{lam} with $D\geq D_2$.
\end{lemma}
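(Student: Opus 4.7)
The plan is to test the momentum equation~\eqref{z3.6} against the material derivative $\dot{\mathbf{u}}$ and develop a Hoff-type differential inequality for the functional $\Phi(t)\triangleq(2\mu+\lambda)\|\divv\mathbf{u}\|_{L^2}^2+\mu\|\curl\mathbf{u}\|_{L^2}^2$. Carrying out the integration by parts, using the boundary identities $\mathbf{u}_t\cdot\mathbf{n}=0$ and $\curl\mathbf{u}=0$ on $\partial\Omega$ together with $\dot{\mathbf{u}}\cdot\mathbf{n}=-\kappa|\mathbf{u}|^2$ from~\eqref{z2.9}, and converting the pressure contribution via the mass equation $P_t+\mathbf{u}\cdot\nabla P+\gamma P\divv\mathbf{u}=0$ into the time-derivative of $\int P\divv\mathbf{u}\,d\mathbf{x}$, one arrives at an identity of the schematic form
\begin{align*}
\frac{d}{dt}\Big[\Phi(t)-2\int P\divv\mathbf{u}\,d\mathbf{x}\Big]+2\|\sqrt\rho\dot{\mathbf{u}}\|_{L^2}^2=-2\int\rho\dot{\mathbf{u}}\cdot(\mathbf{u}\cdot\nabla\mathbf{u})\,d\mathbf{x}+R(t),
\end{align*}
where $R(t)$ gathers the easier contributions $(\gamma-1)\int P|\divv\mathbf{u}|^2\,d\mathbf{x}$, $\int P\partial_j u^i\partial_i u^j\,d\mathbf{x}$, and curvature boundary integrals such as $\int_{\partial\Omega}P\kappa|\mathbf{u}|^2\,ds$. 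The auxiliary term $\int P\divv\mathbf{u}\,d\mathbf{x}$ is bounded by $\tfrac12(2\mu+\lambda)\|\divv\mathbf{u}\|_{L^2}^2+C\|P-\bar P\|_{L^2}^2/(2\mu+\lambda)$ and is absorbed; the boundary integrals are handled by the trace theorem since $\kappa\in C^1(\overline\Omega)$ and $\|P\|_{L^\infty}$ is finite under the a priori bound $\rho\leq2\hat\rho$.

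The decisive step is the convection term. By H\"older and Young,
\[
\Big|\int\rho\dot{\mathbf{u}}\cdot(\mathbf{u}\cdot\nabla\mathbf{u})\,d\mathbf{x}\Big|\leq\|\sqrt\rho\dot{\mathbf{u}}\|_{L^2}\|\sqrt\rho\mathbf{u}\|_{L^4}\|\nabla\mathbf{u}\|_{L^4}\leq\tfrac14\|\sqrt\rho\dot{\mathbf{u}}\|_{L^2}^2+C\|\sqrt\rho\mathbf{u}\|_{L^4}^2\|\nabla\mathbf{u}\|_{L^4}^2.
\]
The Desjardins-type Lemma~\ref{log} together with \eqref{z3.3} gives $\|\sqrt\rho\mathbf{u}\|_{L^4}^2\leq C(1+\sqrt{C_0})\|\nabla\mathbf{u}\|_{L^2}\sqrt{\ln(2+\|\nabla\mathbf{u}\|_{L^2}^2)}$, while \eqref{E5} at $p=4$ bounds $\|\nabla\mathbf{u}\|_{L^4}^2$ by $C\|\sqrt\rho\dot{\mathbf{u}}\|_{L^2}\|\nabla\mathbf{u}\|_{L^2}+C\|\nabla\mathbf{u}\|_{L^2}^2+C\|\sqrt\rho\dot{\mathbf{u}}\|_{L^2}\|P-\bar P\|_{L^2}/(2\mu+\lambda)^2+C\|P-\bar P\|_{L^4}^2/(2\mu+\lambda)^2$. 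Multiplying out and applying Young's inequality once more to reabsorb the remaining $\|\sqrt\rho\dot{\mathbf{u}}\|_{L^2}^2$ factors, the net convection contribution is bounded by
\[
\tfrac12\|\sqrt\rho\dot{\mathbf{u}}\|_{L^2}^2+C(1+C_0)\Big(\|\nabla\mathbf{u}\|_{L^2}^2+\tfrac{\|P-\bar P\|_{L^4}^4}{(2\mu+\lambda)^2}\Big)\|\nabla\mathbf{u}\|_{L^2}^2\ln(2+\|\nabla\mathbf{u}\|_{L^2}^2).
\]
The key mechanism is Poincar\'e's inequality $\|F\|_{L^4}\lesssim\|\nabla F\|_{L^2}$ (available because $\bar F=0$), which eliminates awkward powers of $2\mu+\lambda$ and ensures that the pressure surplus appears precisely in the integrable form $\|P-\bar P\|_{L^4}^4/(2\mu+\lambda)^2$ controlled by the a priori hypothesis~\eqref{z3.1}.

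Using $\|\nabla\mathbf{u}\|_{L^2}^2\leq\Phi/\mu$, assembling the estimates gives
\[
\Phi'(t)+\|\sqrt\rho\dot{\mathbf{u}}\|_{L^2}^2\leq C(1+C_0)B(t)(1+\Phi)\ln(2+\Phi)+H(t),\qquad B(t)\triangleq\|\nabla\mathbf{u}\|_{L^2}^2+\tfrac{\|P-\bar P\|_{L^4}^4}{(2\mu+\lambda)^2},
\]
with $H\in L^1(0,T)$ uniformly in $T$. By Lemma~\ref{l3.1} and \eqref{z3.1}, $\int_0^TB(t)\,dt\leq C(1+C_0)$. Setting $\Psi(t)\triangleq\ln(2+\Phi(t))$ and dividing through by $2+\Phi$, Gr\"onwall's inequality yields $\Psi(T)\leq[\ln(2+M)+C]\exp\{C(1+C_0)^2\}$, i.e., $\Phi(T)\leq(2+M)^{\exp\{2D_2(1+C_0)^2\}}$ for a suitable $D_2$ depending only on $\hat\rho,a,\gamma,\mu,\Omega$. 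Integrating the differential inequality once more over $(0,T)$ then delivers the claimed bound on $\int_0^T\|\sqrt\rho\dot{\mathbf{u}}\|_{L^2}^2\,dt$.

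The main obstacle is the convection-term bookkeeping: one has to orchestrate the Desjardins logarithmic gain, Lemma~\ref{E0}, and several rounds of Young's inequality so that the dangerous $\|\sqrt\rho\dot{\mathbf{u}}\|_{L^2}^2$ is absorbed on the left while the pressure surplus appears only in the $\|P-\bar P\|_{L^4}^4/(2\mu+\lambda)^2$ form tamed by~\eqref{z3.1}, and so that one factor of $\|\nabla\mathbf{u}\|_{L^2}^2$ stays outside the unknown $\Phi$ to play the role of an integrable weight $B(t)$. The Navier-slip boundary integrals from integration by parts on the non-flat $\partial\Omega$ are the second source of difficulty; thanks to $\curl\mathbf{u}=0$, $\mathbf{u}\cdot\mathbf{n}=0$ and the identity $\dot{\mathbf{u}}\cdot\mathbf{n}=-\kappa|\mathbf{u}|^2$, every surviving surface term reduces to a curvature-weighted integral of $|\mathbf{u}|^2$ or $P|\mathbf{u}|^2$, which is harmless since $\kappa$ is bounded and the trace of $|\mathbf{u}|^2$ is dominated by the basic energy from Lemma~\ref{l3.1}.
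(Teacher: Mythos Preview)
Your overall strategy---derive a differential inequality for $\Phi(t)=(2\mu+\lambda)\|\divv\mathbf{u}\|_{L^2}^2+\mu\|\curl\mathbf{u}\|_{L^2}^2$, control the convection term via Lemma~\ref{log} and the $L^4$--estimate \eqref{E5}, and close by a logarithmic Gr\"onwall argument---is exactly the paper's, and your treatment of the convection term and of the Gr\"onwall step is correct and matches \eqref{z3.16}--\eqref{z3.23}.

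There is, however, an internal inconsistency in your setup. The schematic identity you display, with the right-hand side $-2\int\rho\dot{\mathbf{u}}\cdot(\mathbf{u}\cdot\nabla\mathbf{u})\,d\mathbf{x}$ and with $R(t)$ consisting only of harmless $P$--weighted quadratic terms and a curvature boundary integral, is \emph{not} what one obtains by testing \eqref{z3.6} against $\dot{\mathbf{u}}$; it is precisely what one obtains by testing against $\mathbf{u}_t$ (this is the paper's \eqref{z3.9}). When you test against $\mathbf{u}_t$ the boundary contributions vanish because $\mathbf{u}_t\cdot\mathbf{n}=0$ and $\curl\mathbf{u}|_{\partial\Omega}=0$, and the convection term appears exactly in the form you wrote. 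If instead you genuinely test against $\dot{\mathbf{u}}$, the term $\int\rho\dot{\mathbf{u}}\cdot(\mathbf{u}\cdot\nabla\mathbf{u})\,d\mathbf{x}$ does \emph{not} appear; what appears in its place are commutator terms of the type
\[
(2\mu+\lambda)\int\divv\mathbf{u}\,\partial_ju^i\partial_iu^j\,d\mathbf{x},\qquad (2\mu+\lambda)\int(\divv\mathbf{u})^3\,d\mathbf{x},\qquad (2\mu+\lambda)\int_{\partial\Omega}\divv\mathbf{u}\,\kappa|\mathbf{u}|^2\,ds,
\]
each carrying the large factor $(2\mu+\lambda)$. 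These are absent from your list of ``easier contributions'' in $R(t)$, and they are not automatically harmless: to control them one must substitute $(2\mu+\lambda)\divv\mathbf{u}=F+(P-\bar P)$ and invoke \eqref{E1}, \eqref{E3}, and the trace theorem, which is more work than you indicate. The paper avoids all of this simply by using $\mathbf{u}_t$ as the multiplier, so that no boundary terms and no $(2\mu+\lambda)$--weighted cubic terms arise at this stage; the pressure term is then reorganised as in \eqref{z3.10}--\eqref{z3.14} via the effective viscous flux. Either route can be made to work, but you should either change the multiplier to $\mathbf{u}_t$ (and then your displayed identity and your $R(t)$ are correct, with no boundary integrals at all), or keep $\dot{\mathbf{u}}$ and account explicitly for the extra $(2\mu+\lambda)$--weighted terms.
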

\begin{proof}
Multiplying \eqref{z3.6} by $\mathbf{u}_t$ and integrating the resultant over $\Omega$, we have that
\begin{align}\label{z3.9}
\frac{1}{2}\frac{\mathrm{d}}{\mathrm{d}t}\int\left[(2\mu+\lambda)(\divv \mathbf{u})^2+\mu(\curl \mathbf{u})^2\right]\mathrm{d}\mathbf{x}+\int\rho |\dot{\mathbf{u}}|^2\mathrm{d}\mathbf{x}=-\int\mathbf{u}_t\cdot\nabla P\mathrm{d}\mathbf{x}+\int\rho \dot{\mathbf{u}}\cdot(\mathbf{u}\cdot\nabla)\mathbf{u}\mathrm{d}\mathbf{x}.
\end{align}
It follows from $\eqref{a1}_1$, \eqref{z1.4}, H\"older's inequality, Cauchy--Schwarz inequality, and Lemma $\ref{E0}$ that
\begin{align}\label{z3.10}
&-\int\mathbf{u}_t\cdot\nabla P\mathrm{d}\mathbf{x}=\int P\divv\mathbf{u}_t\mathrm{d}\mathbf{x}=\frac{\mathrm{d}}{\mathrm{d}t}\int P\divv\mathbf{u} \mathrm{d}\mathbf{x}-\int\divv\mathbf{u}P'(\rho)\rho_t\mathrm{d}\mathbf{x}\notag\\
&=\frac{\mathrm{d}}{\mathrm{d}t}\int P\divv\mathbf{u} \mathrm{d}\mathbf{x}+\int\divv\mathbf{u}P'(\rho)\rho\divv\mathbf{u}
\mathrm{d}\mathbf{x}+\int\divv\mathbf{u}P'(\rho)\mathbf{u}
\cdot\nabla\rho\mathrm{d}\mathbf{x}\notag\\
&=\frac{\mathrm{d}}{\mathrm{d}t}\int P\divv\mathbf{u}\mathrm{d}\mathbf{x}+
\int(\divv\mathbf{u})^2P'(\rho)\rho\mathrm{d}\mathbf{x}+
\int\mathbf{u}\cdot\nabla\big(P-\bar{P}\big)\divv\mathbf{u}\mathrm{d}\mathbf{x}\notag\\
&=\frac{\mathrm{d}}{\mathrm{d}t}\int P\divv\mathbf{u}\mathrm{d}\mathbf{x}+
\int(\divv\mathbf{u})^2\big(P'(\rho)\rho-P+\bar{P}\big)\mathrm{d}\mathbf{x}-
\int (P-\bar{P})\mathbf{u}\cdot\nabla\divv\mathbf{u}\mathrm{d}\mathbf{x}\notag\\
&=\frac{\mathrm{d}}{\mathrm{d}t}\int P\divv\mathbf{u}\mathrm{d}\mathbf{x}+
\int(\divv\mathbf{u})^2\big(P'(\rho)\rho-P+\bar{P}\big)\mathrm{d}\mathbf{x}
-\frac{1}{2\mu+\lambda}
\int (P-\bar{P})\mathbf{u}\cdot\nabla\big(F+P-\bar{P}\big)\mathrm{d}\mathbf{x}\notag\\
&=\frac{\mathrm{d}}{\mathrm{d}t}\int P\divv\mathbf{u}\mathrm{d}\mathbf{x}+
\int(\divv\mathbf{u})^2\big(P'(\rho)\rho-P+\bar{P}\big)\mathrm{d}\mathbf{x}
-\frac{1}{2\mu+\lambda}
\int \big(P-\bar{P}\big)\mathbf{u}\cdot\nabla F\mathrm{d}\mathbf{x}\notag\\
&\quad+\frac{1}{4\mu+2\lambda}
\int\big(P-\bar{P}\big)^2\divv\mathbf{u}\mathrm{d}\mathbf{x}\notag\\
&\leq \frac{\mathrm{d}}{\mathrm{d}t}\int P\divv\mathbf{u} \mathrm{d}\mathbf{x}
+C\|\divv \mathbf{u}\|_{L^2}^2+C\|P-\Bar{P}\|_{L^\infty}\|\mathbf{u}\|_{L^2}\|\nabla F\|_{L^2}
+\frac{C}{(2\mu+\lambda)^2}\|P-\Bar{P}\|_{L^4}^4\notag\\
&\leq \frac{\mathrm{d}}{\mathrm{d}t}\int P\divv\mathbf{u} \mathrm{d}\mathbf{x}
+\frac{1}{4}\|\sqrt{\rho}\dot{\mathbf{u}}\|_{L^2}^2+
C\|\nabla\mathbf{u}\|_{L^{2}}^{2}+\frac{C}{(2\mu+\lambda)^2}\|P-\Bar{P}\|_{L^4}^4.
\end{align}
Due to the logarithmic interpolation inequality \eqref{z2.1}, we obtain that
\begin{align}\label{z3.11}
&\int\rho \dot{\mathbf{u}}\cdot(\mathbf{u}\cdot\nabla)\mathbf{u}\mathrm{d}\mathbf{x}\notag\\
&\leq C\|\sqrt{\rho}\dot{\mathbf{u}}\|_{L^2}\|\sqrt{\rho}\mathbf{u}\|_{L^{4}}\|\nabla\mathbf{u}\|_{L^{4}}\notag\\
&\leq C\|\sqrt{\rho}\dot{\mathbf{u}}\|_{L^2}(1+\|\sqrt\rho\mathbf{u}\|_{L^2})^{\frac{1}{2}}
\|\nabla\mathbf{u}\|_{L^2}^{\frac{1}{2}}\ln^{\frac{1}{4}}\left(2+\|\nabla\mathbf{u}\|_{L^2}^2\right)
\Big(\|\sqrt{\rho}\dot{\mathbf{u}}\|_{L^2}^{\frac{1}{2}}
\|\nabla\mathbf{u}\|_{L^2}^\frac{1}{2}+\|\nabla\mathbf{u}\|_{L^{2}}\notag\\
&\quad+\frac{1}{2\mu+\lambda}\|\sqrt{\rho}\dot{\mathbf{u}}\|_{L^2}^{\frac{1}{2}}
\|P-\Bar{P}\|_{L^2}^\frac{1}{2}
+\frac{1}{2\mu+\lambda}
\|P-\Bar{P}\|_{L^4}\Big)\notag\\
&\leq C\|\sqrt{\rho}\dot{\mathbf{u}}\|_{L^2}\Big(1+C_0^{\frac{1}{2}}\Big)^{\frac{1}{2}}
\|\nabla\mathbf{u}\|_{L^{2}}^{\frac{1}{2}}\ln^{\frac{1}{4}}\left(2+\|\nabla\mathbf{u}\|_{L^2}^2\right)
\Big(\|\sqrt{\rho}\dot{\mathbf{u}}\|_{L^2}^{\frac{1}{2}}
\|\nabla\mathbf{u}\|_{L^2}^\frac{1}{2}+\|\nabla\mathbf{u}\|_{L^2}\notag\\
&\quad+\frac{1}{2\mu+\lambda}\|\sqrt{\rho}\dot{\mathbf{u}}\|_{L^2}^{\frac{1}{2}}\|P-\Bar{P}\|_{L^2}^\frac{1}{2}
+\frac{1}{2\mu+\lambda}\|P-\Bar{P}\|_{L^4}\Big)\notag\\
&\leq \frac{1}{4}\|\sqrt{\rho}\dot{\mathbf{u}}\|_{L^2}^2+C(1+C_0)\|\nabla\mathbf{u}\|_{L^2}^2
\big(1+\|\nabla\mathbf{u}\|_{L^{2}}^2\big)
\ln\big(2+\|\nabla\mathbf{u}\|_{L^2}^2\big)
+\frac{C}{(2\mu+\lambda)^4}\|P-\Bar{P}\|_{L^4}^4.
\end{align}

Thus, inserting \eqref{z3.10} and \eqref{z3.11} into \eqref{z3.9}, one has that
\begin{align}\label{z3.17}
&\frac{1}{2}\frac{\mathrm{d}}{\mathrm{d}t}\int\left[(2\mu+\lambda)(\divv \mathbf{u})^2+\mu(\curl \mathbf{u})^2-2P\divv\mathbf{u}\right]\mathrm{d}\mathbf{x}+\frac{1}{2}\int\rho |\dot{\mathbf{u}}|^2\mathrm{d}\mathbf{x}\notag\\
&\leq C(1+C_0)\|\nabla\mathbf{u}\|_{L^2}^2\big(1+\|\nabla\mathbf{u}\|_{L^{2}}^2\big)
\ln\big(2+\|\nabla\mathbf{u}\|_{L^2}^2\big)
+\frac{C}{(2\mu+\lambda)^2}\|P-\Bar{P}\|_{L^4}^4.
\end{align}
According to \eqref{z3.1}, there exists a positive constant $D_1=D_1(a,\gamma,\hat{\rho},\Omega)$ such that
\begin{align}\label{z3.18}
B_1(t)&=\int\left[(2\mu+\lambda)(\divv \mathbf{u})^2+\mu(\curl \mathbf{u})^2-2P\divv\mathbf{u}\right]\mathrm{d}\mathbf{x}\\
&\thicksim (2\mu+\lambda)\|\divv \mathbf{u}\|_{L^2}^2+\mu\|\curl \mathbf{u}\|_{L^2}^2\notag
\end{align}
provided that $\lambda\ge D_1$. Setting
\begin{align}\label{z3.20}
f_1(t)\triangleq2+B_1(t),~~g_1(t)\triangleq(1+C_0)\|\nabla\mathbf{u}\|_{L^2}^2
+\frac{1}{(2\mu+\lambda)^2}\|P-\Bar{P}\|_{L^4}^4,
\end{align}
we then deduce from \eqref{z3.17} that
\begin{equation*}
    f'_{1}(t)\leq Cg_{1}(t)f_{1}(t)\ln f_{1}(t),
\end{equation*}
which leads to
\begin{equation*}
\big(\ln f_1(t)\big)'\leq Cg_1(t)\ln f_1(t).
\end{equation*}
This along with Gronwall's inequality, \eqref{z3.1}, \eqref{z3.3}, and Lemma $\ref{Hodge}$ implies that there exists a positive constant $D_2=D_2(a,\gamma,\mu,\hat{\rho},\Omega)\geq D_1$ such that
\begin{equation}\label{z3.23}
\sup_{0\leq t\leq T}\big[(2\mu+\lambda)\|\divv \mathbf{u}\|_{L^2}^2+\mu\|\curl \mathbf{u}\|_{L^2}^2\big]\leq(2+M)^{\mathrm{e}^{D_2(1+C_0)^2}}
\end{equation}
provided that $\lambda\ge D_2$.

Finally, integrating \eqref{z3.17} over $(0,T)$ along with \eqref{z3.1}, \eqref{z3.3}, and \eqref{z3.23} shows that
\begin{align*}
\int_0^T\|\sqrt{\rho}{\dot{\mathbf{u}}}\|_{L^2}^2\mathrm{d}t
&\leq C(1+M)+C(1+C_0)(2+M)^{\mathrm{e}^{D_2(1+C_0)^2}}
\ln\bigg\{(2+M)^{\mathrm{e}^{D_2(1+C_0)^2}}\bigg\}C_0\notag\\
&\leq(2+M)^{e^{\frac{3}{2}D_2(1+C_0)^2}}
\end{align*}
provided that $\lambda$ satisfies \eqref{lam} with $D\geq D_2$,
which together with \eqref{z3.23} yields the desired \eqref{z3.8}.
\end{proof}

Next, we give the bound of $\frac{1}{2\mu+\lambda}\int_{0}^{T}\|P-\Bar{P}\|_{L^4}^4\mathrm{d}t$.
\begin{lemma}\label{l3.3}
Let \eqref{z3.1} be satisfied, it holds that
\begin{align}\label{z3.25}
\frac{1}{2\mu+\lambda}\int_{0}^{T}\|P-\Bar{P}\|_{L^4}^4\mathrm{d}t\le (2+M)^{e^{3D_2(1+C_0)^2}}
\end{align}
provided that $\lambda$ satisfies \eqref{lam} with $D\geq 2D_2$.
\end{lemma}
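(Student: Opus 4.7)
The plan is to establish an ODE of the form $\frac{\mathrm{d}}{\mathrm{d}t}\|P-\bar{P}\|_{L^4}^4+\frac{c_0}{2\mu+\lambda}\|P-\bar{P}\|_{L^4}^4\le\text{(integrable RHS)}$ and then integrate in time. Starting from $\eqref{a1}_1$ and $P=a\rho^\gamma$, one has $P_t+\mathbf{u}\cdot\nabla P+\gamma P\divv\mathbf{u}=0$. Multiplying by $4(P-\bar{P})^3$, using $4(P-\bar{P})^3 P_t=\partial_t(P-\bar{P})^4+4\bar{P}_t(P-\bar{P})^3$ and $4(P-\bar{P})^3\mathbf{u}\cdot\nabla P=\mathbf{u}\cdot\nabla(P-\bar{P})^4$, integrating over $\Omega$ (no boundary contribution since $\mathbf{u}\cdot\mathbf{n}=0$), and merging the two divergence terms via the algebraic identity $-\divv\mathbf{u}(P-\bar{P})^4+4\gamma P(P-\bar{P})^3\divv\mathbf{u}=\divv\mathbf{u}(P-\bar{P})^3[(4\gamma-1)P+\bar{P}]$, one obtains
\begin{equation*}
\frac{\mathrm{d}}{\mathrm{d}t}\|P-\bar{P}\|_{L^4}^4+\int\divv\mathbf{u}(P-\bar{P})^3\bigl[(4\gamma-1)P+\bar{P}\bigr]\mathrm{d}\mathbf{x}+4\bar{P}_t\int(P-\bar{P})^3\mathrm{d}\mathbf{x}=0.
\end{equation*}
Substituting the effective-flux identity $\divv\mathbf{u}=(F+P-\bar{P})/(2\mu+\lambda)$ from \eqref{z1.4} yields a damping term $\frac{1}{2\mu+\lambda}\int(P-\bar{P})^4[(4\gamma-1)P+\bar{P}]\mathrm{d}\mathbf{x}$ on the left. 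The crucial point is that Jensen's inequality applied to the convex function $\rho\mapsto a\rho^\gamma$, combined with the conservation $\bar{\rho}(t)=\bar{\rho}_0$, gives $\bar{P}(t)\ge P(\bar{\rho}_0)\triangleq c_0>0$; consequently $(4\gamma-1)P+\bar{P}\ge c_0$ pointwise and the damping is genuine at rate $c_0/(2\mu+\lambda)$.

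Each remaining term on the right-hand side is bounded using the a priori $L^\infty$-estimate $\|P-\bar{P}\|_{L^\infty}\le C(\hat{\rho})$ from \eqref{z3.1}, which yields $\|(P-\bar{P})^3\|_{L^2}=\|P-\bar{P}\|_{L^6}^3\le C(\hat{\rho})\|P-\bar{P}\|_{L^4}^2$ and $|\int(P-\bar{P})^3\mathrm{d}\mathbf{x}|\le C(\hat{\rho},\Omega)\|P-\bar{P}\|_{L^4}^2$. Combining with H\"older and Young's inequalities gives
\begin{equation*}
\frac{1}{2\mu+\lambda}\Bigl|\int F(P-\bar{P})^3[(4\gamma-1)P+\bar{P}]\mathrm{d}\mathbf{x}\Bigr|\le\frac{c_0}{4(2\mu+\lambda)}\|P-\bar{P}\|_{L^4}^4+\frac{C(\hat{\rho},c_0)}{2\mu+\lambda}\|F\|_{L^2}^2,
\end{equation*}
and, since $|\bar{P}_t|\le C(\hat{\rho})\|\divv\mathbf{u}\|_{L^2}$ from the mass equation,
\begin{equation*}
\Bigl|4\bar{P}_t\int(P-\bar{P})^3\mathrm{d}\mathbf{x}\Bigr|\le\frac{c_0}{4(2\mu+\lambda)}\|P-\bar{P}\|_{L^4}^4+C(\hat{\rho},c_0,\Omega)(2\mu+\lambda)\|\divv\mathbf{u}\|_{L^2}^2.
\end{equation*}
After absorption one is left with $\frac{\mathrm{d}}{\mathrm{d}t}\|P-\bar{P}\|_{L^4}^4+\frac{c_0}{2(2\mu+\lambda)}\|P-\bar{P}\|_{L^4}^4\le\frac{C(\hat{\rho})}{2\mu+\lambda}\|F\|_{L^2}^2+C(\hat{\rho},\Omega)(2\mu+\lambda)\|\divv\mathbf{u}\|_{L^2}^2$.

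Integrating over $(0,T)$ and multiplying by $2(2\mu+\lambda)/c_0$, the basic energy identity \eqref{z3.3} controls $\int_0^T(2\mu+\lambda)\|\divv\mathbf{u}\|_{L^2}^2\mathrm{d}t\le C_0$, while \eqref{E3} combined with Lemma \ref{l3.2} bounds $\int_0^T\|F\|_{L^2}^2\mathrm{d}t\le C(\hat{\rho},\mu)(2+M)^{e^{2D_2(1+C_0)^2}}$; since this latter integral enters divided by $2\mu+\lambda$, it stays $\lambda$-uniform. Dividing the resulting estimate by $2\mu+\lambda$ yields $\frac{1}{2\mu+\lambda}\int_0^T\|P-\bar{P}\|_{L^4}^4\mathrm{d}t\le C(\hat{\rho},\bar{\rho}_0,\mu,\Omega)(1+C_0)(2+M)^{e^{2D_2(1+C_0)^2}}$, which fits inside $(2+M)^{e^{3D_2(1+C_0)^2}}$ once $D_2$ is chosen sufficiently large. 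The main subtlety is the careful $\lambda$-bookkeeping: the damping weight is $1/(2\mu+\lambda)$, whereas Young's inequality on the $\bar{P}_t$ term naturally forces a factor $(2\mu+\lambda)\|\divv\mathbf{u}\|_{L^2}^2$ that only becomes $\lambda$-uniform after integration via \eqref{z3.3}. Matching the damping rate, the $L^2$-bound on $F$ from Lemma \ref{l3.2}, and the energy bound on $(2\mu+\lambda)\|\divv\mathbf{u}\|_{L^2}^2$ so that nothing grows with $\lambda$ is precisely what dictates the enlarged exponent $3D_2$ in place of the $2D_2$ inherited from Lemma \ref{l3.2}.
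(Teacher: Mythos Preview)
Your argument is mathematically sound but takes a genuinely different route from the paper, and it carries one dependency issue worth flagging.

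\textbf{Difference in strategy.} The paper does \emph{not} differentiate $\|P-\bar P\|_{L^4}^4$. Instead it multiplies \eqref{z3.26} by $3(P-\bar P)^2$, obtaining an identity for the \emph{cubic} functional $\int(P-\bar P)^3\,\mathrm{d}\mathbf{x}$. After substituting $\divv\mathbf{u}=(F+P-\bar P)/(2\mu+\lambda)$ this produces the damping term $\frac{3\gamma-1}{2\mu+\lambda}\|P-\bar P\|_{L^4}^4$ with the explicit constant $3\gamma-1>0$, which depends only on $\gamma$. The remaining $F$-contribution is handled in $L^4$ via $\|F\|_{L^4}^4\le C\|F\|_{L^2}^2\|\nabla F\|_{L^2}^2$, and the factor $\|F\|_{L^2}^2\le C\big((2\mu+\lambda)^2\|\divv\mathbf u\|_{L^2}^2+\|P-\bar P\|_{L^2}^2\big)$ absorbs the extra $(2\mu+\lambda)$ via the sup bound of Lemma~\ref{l3.2}. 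Your quartic route is shorter in that it only needs $\|F\|_{L^2}$ (through \eqref{E3} and Poincar\'e), avoiding $\nabla F$ entirely; it also works with a nonnegative functional, so no separate sup bound on $\|P-\bar P\|_{L^3}^3$ is needed.

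\textbf{The dependency issue.} Your damping factor is $(4\gamma-1)P+\bar P$, and you lower-bound it by $c_0=P(\bar\rho_0)$ using Jensen. This is correct, but $c_0$ now enters every subsequent constant as $1/c_0^2$ after the two Young absorptions. Consequently your final bound reads, schematically,
\[
\frac{1}{2\mu+\lambda}\int_0^T\|P-\bar P\|_{L^4}^4\,\mathrm{d}t
\le C(\hat\rho,\mu,\Omega)\,c_0^{-2}\,(1+C_0)\,(2+M)^{e^{2D_2(1+C_0)^2}}.
\]
To place this under $(2+M)^{e^{3D_2(1+C_0)^2}}$ you must choose $D_2$ large depending on $c_0=a\bar\rho_0^{\gamma}$, i.e.\ on $\bar\rho_0$. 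But $D_2$ was fixed in Lemma~\ref{l3.2} as a function of $a,\gamma,\mu,\hat\rho,\Omega$ only, and the main theorem requires the final constant $D$ to be independent of $\bar\rho_0$. The paper's cubic approach sidesteps this: its coefficient $3\gamma-1$ is universal. If you want to keep your quartic scheme, you would need a different lower bound on $(4\gamma-1)P+\bar P$ that does not degenerate as $\bar\rho_0\to 0$, and there is no obvious one since both $P$ and $\bar P$ can vanish; alternatively you could split off the term $(4\gamma-1)\int(P-\bar P)^5\,\mathrm d\mathbf x$ and treat it as a source, but then the algebra essentially reduces to the paper's cubic identity.
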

\begin{proof}
It follows from $\eqref{a1}_1$ and $P=a\rho^\gamma$ that
\begin{equation*}
  P_t+\divv(P\mathbf{u})+(\gamma-1)P\divv\mathbf{u}=0,
\end{equation*}
which implies that
\begin{equation}\label{z3.26}
  (P-\Bar{P})_t+\mathbf{u}\cdot\nabla(P-\Bar{P})+\gamma P\divv\mathbf{u}-(\gamma-1)\overline{P\divv\mathbf{u}}=0.
\end{equation}
Noting that
\begin{equation*}
  \overline{P\divv\mathbf{u}}=\fint a\rho^\gamma\divv\mathbf{u}\mathrm{d}\mathbf{x}\leq C(a,\gamma,\hat{\rho},\Omega)\left\|\divv\mathbf{u}\right\|_{L^2},
\end{equation*}
we multiply $\eqref{z3.26}$ by $3(P-\Bar{P})^2$ and integrate the resulting equality over $\Omega$ to obtain that
\begin{align}
&\frac{3\gamma-1}{2\mu+\lambda}\left\|P-\Bar{P}\right\|_{L^4}^4\notag\\
&=-\frac{\mathrm{d}}{\mathrm{d}t}\int(P-\Bar{P})^3\mathrm{d}\mathbf{x}
-\frac{3\gamma-1}{2\mu+\lambda}\int(P-\Bar{P})^3F\mathrm{d}\mathbf{x}
+3(\gamma-1)\overline{P\divv\mathbf{u}}\int(P-\Bar{P})^2\mathrm{d}\mathbf{x}\notag\\
&\quad-3\gamma \Bar{P}\int(P-\Bar{P})^2\divv\mathbf{u}\mathrm{d}\mathbf{x}\notag\\
&\leq-\frac{\mathrm{d}}{\mathrm{d}t}\int(P-\Bar{P})^3\mathrm{d}\mathbf{x}+
\frac{3\gamma-1}{2(2\mu+\lambda)}\left\|P-\Bar{P}\right\|_{L^4}^4+\frac{C}{2\mu+\lambda}\left\|F\right\|_{L^4}^4
+C(2\mu+\lambda)\left\|\divv\mathbf{u}\right\|_{L^2}^2\notag.
\end{align}
Integrating the above inequality over $(0,T)$, one deduces from \eqref{z3.3}, \eqref{z3.8}, and Lemma \ref{E0} that
\begin{align}\label{z3.27}
&\frac{1}{2\mu+\lambda}\int_{0}^{T}\|P-\Bar{P}\|_{L^{4}}^{4}\mathrm{d}t\notag\\
&\leq C\sup_{0\leq t\leq T}\left\|P-\Bar{P}\right\|_{L^3}^3+\frac{C}{2\mu+\lambda}\int_0^T\|F\|_{L^2}^2\|\nabla F\|_{L^2}^2\mathrm{d}t+CC_0\notag\\
&\leq C(1+C_0)+\frac{C}{2\mu+\lambda}\int_0^T\left[(2\mu+\lambda)^2
\|\divv\mathbf{u}\|_{L^2}^2+\|P-\Bar{P}\|_{L^2}^2\right]\left(\|\sqrt{\rho}\dot{\mathbf{u}}\|_{L^2}^2+\|\nabla \mathbf{u}\|_{L^2}^2\right)\mathrm{d}t\notag\\
&\leq(2+M)^{\mathrm{e}^{3D_{2}(1+C_{0})^{2}}},
\end{align}
as the desired \eqref{z3.25}.
\end{proof}

Next, motivated by \cite{Hoff95,Hoff95*,HWZ}, we have the following time-weighted estimate on  $\|\sqrt{\rho}{\dot{\mathbf{u}}}\|_{L^2}^2$.
\begin{lemma}\label{l3.4}
Let \eqref{z3.1} be satisfied, it holds that
\begin{align}\label{z3.28}
\sup_{0\leq t\leq T}\big(\sigma\|\sqrt{\rho}{\dot{\mathbf{u}}}\|_{L^2}^2\big)
+\int_0^T\big[(2\mu+\lambda)\sigma\|\divf \mathbf{u}\|_{L^2}^2+\mu\sigma\|\curl \dot{\mathbf{u}}\|_{L^2}^2\big]\mathrm{d}t
\leq\exp\bigg\{(2+M)^{\mathrm{e}^{4D_{2}(1+C_{0})^{2}}}\bigg\}
\end{align}
provided that $\lambda$ satisfies \eqref{lam} with $D\geq 3D_2$.
\end{lemma}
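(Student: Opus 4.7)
The plan is to perform a time-weighted Hoff-type estimate on the momentum equation in the balance form \eqref{b1}. Following the approach signalled in the introduction, I would apply the operator $\partial_t(\cdot)+\divv(\mathbf{u}\otimes\,\cdot\,)$ componentwise to \eqref{b1}, multiply by $\sigma\dot{\mathbf{u}}$, and integrate over $\Omega$. Using $\eqref{a1}_1$, the left-hand side produces $\tfrac12(\sigma\|\sqrt\rho\dot{\mathbf{u}}\|_{L^2}^2)'-\tfrac12\sigma'\int\rho|\dot{\mathbf{u}}|^2\mathrm{d}\mathbf{x}$, while the viscous and pressure contributions fall into three families of integrals, the principal one being
\begin{equation*}
(2\mu+\lambda)\sigma\int\dot u^{j}\!\left[\partial_{j}(\divv\mathbf{u})_{t}+\divv(\mathbf{u}\,\partial_{j}\divv\mathbf{u})\right]\mathrm{d}\mathbf{x},
\end{equation*}
together with the analogous $\nabla^{\bot}\curl\mathbf{u}$ and $\nabla P$ versions. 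Crucially, I would \emph{not} rewrite the viscous term as $\nabla F-\mu\nabla^{\bot}\curl\mathbf{u}$ (as in whole-space or flat-boundary works); instead, I would integrate by parts so as to extract the good dissipations $(2\mu+\lambda)\sigma\|\divf\mathbf{u}\|_{L^2}^2$ and $\mu\sigma\|\curl\dot{\mathbf{u}}\|_{L^2}^2$. This produces numerous commutator remainders of the form $[\partial_j,\mathbf{u}\cdot\nabla]$, each quadratic in $\nabla\mathbf{u}$ and hence controllable via Lemma \ref{l3.2}.

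The principal obstacle is the treatment of the boundary integrals. By \eqref{z2.9} we have $\dot{\mathbf{u}}\cdot\mathbf{n}=\kappa|\mathbf{u}|\mathbf{u}^{\bot}\cdot\mathbf{n}\not\equiv 0$ on $\partial\Omega$, so every integration by parts produces non-trivial surface contributions such as $\int_{\partial\Omega}P\,\dot{\mathbf{u}}\cdot\nabla\mathbf{u}\cdot\mathbf{n}\,\mathrm{ds}$ and $\int_{\partial\Omega}(P-\bar P)_{t}\,\dot{\mathbf{u}}\cdot\mathbf{n}\,\mathrm{ds}$. Since $\rho$ has no trace, these cannot be estimated directly. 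As the introduction indicates, the remedy is to group the surface terms into the cancellation packages $\tilde B_{1}+\tilde B_{2}+\tilde A_{1}$ and $\tilde B_{3}+\tilde A_{2}$ using the divergence theorem and the identity \eqref{z2.9}. For the surviving boundary integral $(2\mu+\lambda)\int_{\partial\Omega}\sigma\,\mathbf{u}\cdot\nabla\divv\mathbf{u}\,(\dot{\mathbf{u}}\cdot\mathbf{n})\,\mathrm{ds}$, I would substitute $\dot{\mathbf{u}}\cdot\mathbf{n}=\kappa|\mathbf{u}|\mathbf{u}^{\bot}\cdot\mathbf{n}$ with the smooth extension $\kappa\in C^{2}(\overline\Omega)$ and convert to an interior expression, bounded through Lemma \ref{E0}.

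For the interior nonlinearities, I would split $\nabla\mathbf{u}=\nabla\mathcal{P}\mathbf{u}+\nabla\mathcal{Q}\mathbf{u}$ via the Leray projector and push second-order derivatives onto $\nabla^{2}\mathcal{P}\mathbf{u}$ (controlled $\lambda$-independently by \eqref{E1}), while the curl-free part, governed by $\nabla\divv\mathbf{u}=(2\mu+\lambda)^{-1}\nabla(F+P-\bar P)$, carries a saving factor $(2\mu+\lambda)^{-1}$ that dominates the $\lambda$-dependent bounds of Lemmas \ref{l3.2} and \ref{l3.3}. Whenever a factor $\lambda^{2/p}$ would otherwise enter through \eqref{E4}, I would instead invoke $\|F\|_{L^{p}}\le C\|\nabla F\|_{L^{2}}$ (available since $\bar F=0$), which is uniform in $\lambda$ and realises the ``superior suppression'' observed in the introduction. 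The cubic trouble term $\int\mathbf{u}\cdot\nabla P\,(\mathcal{P}\mathbf{u})^{i}_{j}(\mathcal{P}\mathbf{u})^{j}_{i}\,\mathrm{d}\mathbf{x}$ is handled by Lemma \ref{GN} together with the $L^{2}$-bound on $\nabla^{2}\mathcal{P}\mathbf{u}$, thereby avoiding any higher-order estimate of the density.

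Combining all of the above, I expect a differential inequality of the form
\begin{equation*}
\frac{\mathrm{d}}{\mathrm{d}t}\!\left(\sigma\|\sqrt\rho\dot{\mathbf{u}}\|_{L^{2}}^{2}\right)+(2\mu+\lambda)\sigma\|\divf\mathbf{u}\|_{L^{2}}^{2}+\mu\sigma\|\curl\dot{\mathbf{u}}\|_{L^{2}}^{2}\le \Phi(t)\,\sigma\|\sqrt\rho\dot{\mathbf{u}}\|_{L^{2}}^{2}+\Psi(t),
\end{equation*}
with $\int_{0}^{T}(\Phi+\Psi)\,\mathrm{d}t$ dominated by $(2+M)^{e^{4D_{2}(1+C_{0})^{2}}}$ thanks to Lemmas \ref{l3.2} and \ref{l3.3} and the logarithmic interpolation \eqref{z2.1}. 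Gronwall's inequality then yields the stated exponential bound. The hardest step will be the boundary-term bookkeeping: ensuring that each problematic surface integral (notably those involving $P$ or second-order derivatives of $\mathbf{u}$) is either cancelled against another or converted to an interior quantity compatible with the Hodge decomposition and with the budget set by Lemmas \ref{l3.2}--\ref{l3.3}.
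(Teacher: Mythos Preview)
Your proposal is correct and follows essentially the same route as the paper: the Hoff-type weighted estimate on \eqref{b1}, extraction of the dissipations $(2\mu+\lambda)\sigma\|\divf\mathbf{u}\|_{L^2}^2$ and $\mu\sigma\|\curl\dot{\mathbf{u}}\|_{L^2}^2$, the boundary cancellation packages $\tilde B_1+\tilde B_2+\tilde A_1$ and $\tilde B_3+\tilde A_2$, the Hodge splitting $\nabla\mathbf{u}=\nabla\mathcal P\mathbf{u}+\nabla\mathcal Q\mathbf{u}$, and the Poincar\'e control $\|F\|_{L^p}\le C\|\nabla F\|_{L^2}$ are exactly what the paper does. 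The one point your anticipated differential inequality understates is that the $F_t$ contributions---arising both from $\tilde B_1$ on $\partial\Omega$ and from the interior term $-\sigma\int F_t(\mathcal P\mathbf{u})^i_j(\mathcal P\mathbf{u})^j_i\,\mathrm d\mathbf{x}$---cannot be estimated directly but must be rewritten as total time derivatives and moved to the left; the Gronwall step is therefore run for an augmented functional $B_2(t)\sim\sigma\|\sqrt\rho\dot{\mathbf{u}}\|_{L^2}^2+B_1(t)$ (after adding a large multiple of \eqref{z3.7}$+$\eqref{z3.17}), not for $\sigma\|\sqrt\rho\dot{\mathbf{u}}\|_{L^2}^2$ alone.
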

\begin{proof}
Operating $\sigma\dot{u}^j[\partial/\partial t+\divv({\mathbf{u}}\cdot)]$ on $\eqref{z3.6}^j$, summing all the equalities with respect to $j$, and integrating the resultant over $\Omega$, we obtain from
$\eqref{a1}_1$ and \eqref{a3} that
\begin{align}\label{z3.29}
&\frac{1}{2}\frac{\mathrm{d}}{\mathrm{d}t}\int\sigma\rho|\dot{\mathbf{u}}|^2\mathrm{d}\mathbf{x}
-\frac{\sigma'}{2}\int\rho|\dot{\mathbf{u}}|^2\mathrm{d}\mathbf{x}\notag\\
&=-\sigma\int\dot{u}^j[\partial_j P_{t}+\divv(\mathbf{u}\partial_{j}P)]\mathrm{d}\mathbf{x}
-\mu\sigma\int\dot{u}^j\big[(\nabla^\bot\curl \mathbf{u}_{t})^j+\divv\big(\mathbf{u}(\nabla^\bot\curl \mathbf{u})^{j}\big)\big]\mathrm{d}\mathbf{x}\notag\\
&\quad+(2\mu+\lambda)\sigma\int\dot{u}^j[\partial_j\divv\mathbf{u}_t +\divv(\mathbf{u}\partial_j\divv\mathbf{u})]\mathrm{d}\mathbf{x}\triangleq \sum_{i=1}^{3}J_i.
\end{align}
Taking advantage of $\eqref{a1}_1$, integration by parts, and Cauchy--Schwarz inequality, one gets that
\begin{align}\label{z3.30}
J_{1}
&=-\sigma\int_{\partial\Omega}P_{t}\dot{\mathbf{u}}\cdot \mathbf{n}\mathrm{ds}+\sigma\int P_{t}\divv\dot{\mathbf{u}}\mathrm{d}\mathbf{x}-\sigma\int\dot{\mathbf{u}}\cdot\nabla\divv(P\mathbf{u})\mathrm{d}\mathbf{x}
+\sigma\int\dot{u}^j\divv(P\partial_{j}\mathbf{u})\mathrm{d}\mathbf{x}\notag\\
&=-\sigma\int_{\partial\Omega}\big(P_{t}+\divv(P\mathbf{u})\big)\dot{\mathbf{u}}\cdot \mathbf{n}\mathrm{ds}
+\sigma\int \big(P_{t}+\divv(P\mathbf{u})\big)\divv\dot{\mathbf{u}}\mathrm{d}\mathbf{x}
+\sigma\int\dot{\mathbf{u}}\cdot\nabla\mathbf{u}\cdot\nabla P\mathrm{d}\mathbf{x}
\notag\\&\quad+\sigma\int P\dot{\mathbf{u}}\cdot\nabla\divv\mathbf{u}\mathrm{d}\mathbf{x}\notag\\
&=-\sigma\int_{\partial\Omega}\big(P_{t}+\divv(P\mathbf{u})\big)\dot{\mathbf{u}}\cdot \mathbf{n}\mathrm{ds}+\sigma\int_{\partial\Omega} P\dot{\mathbf{u}}\cdot\nabla\mathbf{u}\cdot\mathbf{n}\mathrm{ds}
\notag\\ & \quad-\sigma(\gamma-1)\int P\divv\mathbf{u}\divv\dot{\mathbf{u}}\mathrm{d}\mathbf{x}
-\sigma\int P\nabla\dot{\mathbf{u}}:\nabla\mathbf{u} \mathrm{d}\mathbf{x}
\notag\\
&\triangleq\sum_{i=1}^2\mathcal{B}_i-\sigma(\gamma-1)\int P\divv\mathbf{u}\divv\dot{\mathbf{u}}\mathrm{d}\mathbf{x}
-\sigma\int P\nabla\dot{\mathbf{u}}:\nabla\mathbf{u} \mathrm{d}\mathbf{x}\notag\\
&\leq \sum_{i=1}^2\mathcal{B}_i+\frac{\tilde{C}\sigma}{16}\|\nabla\dot{\mathbf{u}}\|_{L^2}^2
+C\sigma\|\nabla\mathbf{u}\|_{L^2}^2,
\end{align}
where the positive constant $\tilde{C}=\tilde{C}(\mu,\Omega)$ will be determined later.
Similarly, one has that
\begin{align}
J_2&=-\mu\sigma\int\dot{\mathbf{u}}\cdot\nabla^\bot\curl \mathbf{u}_{t}\mathrm{d}\mathbf{x}
-\mu\sigma\int\dot{u}^j\divv(\mathbf{u}\nabla^\bot_j\curl \mathbf{u})\mathrm{d}\mathbf{x}\notag\\
&=-\mu\sigma\int_{\partial\Omega}\curl\mathbf{u}_t\dot{\mathbf{u}}\cdot\mathbf{n}^\bot\mathrm{ds}
-\mu\sigma\int\curl\dot{\mathbf{u}}\curl\mathbf{u}_t\mathrm{d}\mathbf{x}
-\mu\sigma\int\dot{u}^j\big[\nabla^\bot_j\divv(\mathbf{u}\curl \mathbf{u})-\divv(\nabla^\bot_j\mathbf{u}\curl \mathbf{u})\big]\mathrm{d}\mathbf{x}\notag\\
&=-\mu\sigma\int_{\partial\Omega}\curl\mathbf{u}_t\dot{\mathbf{u}}\cdot\mathbf{n}^\bot\mathrm{ds}
-\mu\sigma\int(\curl\dot{\mathbf{u}})^2\mathrm{d}\mathbf{x}
+\mu\sigma\int\curl\dot{\mathbf{u}}\curl(\mathbf{u}\cdot\nabla\mathbf{u})\mathrm{d}\mathbf{x}
\notag\\
&\quad-\mu\sigma\int_{\partial\Omega}\divv(\mathbf{u}\curl\mathbf{u})
\dot{\mathbf{u}}\cdot\mathbf{n}^\bot\mathrm{ds}
-\mu\sigma\int\curl\dot{\mathbf{u}}\divv(\mathbf{u}\curl\mathbf{u})
\mathrm{d}\mathbf{x}
\notag\\
&\quad+\mu\sigma\int_{\partial\Omega}(\curl\mathbf{u})\dot{u}^j\nabla^\bot_j\mathbf{u}\cdot\mathbf{n}\mathrm{ds}
-\mu\sigma\int\nabla\dot{u}^j\cdot\nabla^\bot_j\mathbf{u}\curl\mathbf{u}\mathrm{d}\mathbf{x}\notag\\
&=-\mu\sigma\int(\curl\dot{\mathbf{u}})^2\mathrm{d}\mathbf{x}
-\mu\sigma\int\nabla\dot{u}^j\cdot\nabla^\bot_j\mathbf{u}\curl\mathbf{u}\mathrm{d}\mathbf{x}\notag\\
&\leq-\mu\sigma\|\curl\dot{\mathbf{u}}\|_{L^2}^2+\frac{\tilde{C}\sigma}{16}\|\nabla\dot{\mathbf{u}}\|_{L^2}^2+C\sigma\|\nabla\mathbf{u}\|_{L^4}^4\notag\\
&\leq-\mu\sigma\|\curl\dot{\mathbf{u}}\|_{L^2}^2
+\frac{\tilde{C}\sigma}{16}\|\nabla\dot{\mathbf{u}}\|_{L^2}^2
+C\sigma\big(1+\|\nabla\mathbf{u}\|_{L^2}^2\big)
\big(\|\sqrt{\rho}\dot{\mathbf{u}}\|_{L^2}^2+\|\nabla\mathbf{u}\|_{L^2}^2\big)
\notag\\
&\quad+\frac{C}{(2\mu+\lambda)^4}\|P-\Bar{P}\|_{L^4}^4,
\end{align}
where in the fourth equality we have used $\curl(\mathbf{u}\cdot\nabla\mathbf{u})=\divv(\mathbf{u}\curl\mathbf{u})$.
Turning to the bound of $J_3$, we observe that
\begin{align}\label{z3.31}
J_3
&=(2\mu+\lambda)\sigma\int\dot{u}^j[\partial_j\divv\mathbf{u}_t+ \partial_j\divv(\mathbf{u}\divv\mathbf{u})-\divv(\partial_j\mathbf{u}\divv\mathbf{u})]\mathrm{d}\mathbf{x}\notag\\
&=(2\mu+\lambda)\sigma\int_{\partial\Omega}[\divv\mathbf{u}_t+\divv(\mathbf{u}\divv\mathbf{u})]\dot{\mathbf{u}}\cdot\mathbf{n}\mathrm{ds}
-(2\mu+\lambda)\sigma\int\divv\dot{\mathbf{u}}[\divv\mathbf{u}_t+\divv(\mathbf{u}\divv\mathbf{u})]\mathrm{d}\mathbf{x}\notag\\
&\quad-(2\mu+\lambda)\sigma\int\dot{u}^j\divv(\partial_j\mathbf{u}\divv\mathbf{u})\mathrm{d}\mathbf{x}\notag\\
&=(2\mu+\lambda)\sigma\int_{\partial\Omega}[\divv\mathbf{u}_t+\divv(\mathbf{u}\divv\mathbf{u})]\dot{\mathbf{u}}\cdot\mathbf{n}\mathrm{ds}
-(2\mu+\lambda)\sigma\int\dot{u}^j\divv(\partial_j\mathbf{u}\divv\mathbf{u})\mathrm{d}\mathbf{x}\notag\\
&\quad-(2\mu+\lambda)\sigma\int(\divv\mathbf{u}_t+\mathbf{u}\cdot\nabla\divv\mathbf{u}+\nabla\mathbf{u}:\nabla\mathbf{u})
[\divv\mathbf{u}_t+\mathbf{u}\cdot\nabla\divv\mathbf{u}+(\divv\mathbf{u})^2]\mathrm{d}\mathbf{x}\notag\\
&=(2\mu+\lambda)\sigma\int_{\partial\Omega}\divv\mathbf{u}_t\dot{\mathbf{u}}\cdot\mathbf{n}\mathrm{ds}+
(2\mu+\lambda)\sigma\int_{\partial\Omega}\divv(\mathbf{u}\divv\mathbf{u})\dot{\mathbf{u}}\cdot\mathbf{n}\mathrm{ds}
-(2\mu+\lambda)\sigma\int_{\partial\Omega}(\divv\mathbf{u})\dot{\mathbf{u}}\cdot\nabla\mathbf{u}\cdot\mathbf{n}\mathrm{ds}\notag\\
&\quad-(2\mu+\lambda)\sigma\int\big[(\divf\mathbf{u})^2
+\divf\mathbf{u}(\divv\mathbf{u})^2+\divf\mathbf{u}\nabla\mathbf{u}:\nabla\mathbf{u}
+(\divv\mathbf{u})^2\nabla\mathbf{u}:\nabla\mathbf{u}
-\partial_j\mathbf{u}\cdot\nabla\dot{u}^j\divv\mathbf{u}\big]\mathrm{d}\mathbf{x}\notag\\
&\triangleq \sum_{i=3}^5\mathcal{B}_{i}-(2\mu+\lambda)\sigma\|\divf\mathbf{u}\|_{L^2}^2+\sum_{i=1}^4J_{3i},
\end{align}
where we define
\begin{equation*}
\divf\mathbf{u}\triangleq\divv\mathbf{u}_t+\mathbf{u}\cdot\nabla\divv\mathbf{u}.
\end{equation*}

Next, we shall control each $J_{3i}\ (i\in\{1,2,3,4\})$. Noting that $0\leq\sigma, \sigma'\leq1$ for $t>0$, then one infers from \eqref{z3.1}, H\"older's inequality, Lemma $\ref{GN}$, and Lemma $\ref{E0}$ that
\begin{align}
J_{31}&=-\frac{\sigma}{2\mu+\lambda}\int\divf\mathbf{u}(F+P-\Bar{P})^2\mathrm{d}\mathbf{x}\notag\\
&\leq\frac{C\sigma}{2\mu+\lambda}\|\divf\mathbf{u}\|_{L^2}(\|F\|_{L^4}^2+\|P-\Bar{P}\|_{L^4}^2)\notag\\
&\leq\frac{C\sigma}{2\mu+\lambda}\|\divf\mathbf{u}\|_{L^2}(\|F\|_{L^2}\|\nabla F\|_{L^2}+\|P-\Bar{P}\|_{L^4}^2)\notag\\
&\leq\frac{C\sigma}{2\mu+\lambda}\|\divf\mathbf{u}\|_{L^2}\left[\big((2\mu+\lambda)
\|\divv\mathbf{u}\|_{L^2}+\|P-\Bar{P}\|_{L^2}\big)\left(\|\sqrt{\rho}\dot{\mathbf{u}}\|_{L^2}+\|\nabla \mathbf{u}\|_{L^2}\right)+\|P-\Bar{P}\|_{L^4}^2\right]\notag\\
&\leq\frac{\sigma(2\mu+\lambda)}{16}\|\divf\mathbf{u}\|_{L^2}^2
+C\sigma\big(1+\|\nabla\mathbf{u}\|_{L^2}^2\big)
\big(\|\sqrt{\rho}\dot{\mathbf{u}}\|_{L^2}^2+\|\nabla\mathbf{u}\|_{L^2}^2\big)
+\frac{C}{(2\mu+\lambda)^3}\|P-\Bar{P}\|_{L^4}^4.
\end{align}
Using the Hodge-type decomposition, Lemma $\ref{E0}$, and H\"older's inequality, one gets that
\begin{align}\label{x1}
J_{32}
&=-(2\mu+\lambda)\sigma\int\divf\mathbf{u}(\mathcal{Q}\mathbf{u}+\mathcal{P}\mathbf{u})_j^i(\mathcal{Q}\mathbf{u}
+\mathcal{P}\mathbf{u})_i^j\mathrm{d}\mathbf{x}\notag\\
&=-(2\mu+\lambda)\sigma\int\divf\mathbf{u}\big[(\mathcal{Q}\mathbf{u})_j^i(\mathcal{Q}\mathbf{u})_i^j+
(\mathcal{Q}\mathbf{u})_j^i(\mathcal{P}\mathbf{u})_i^j+(\mathcal{P}\mathbf{u})_j^i(\mathcal{Q}\mathbf{u})_i^j
+(\mathcal{P}\mathbf{u})_j^i(\mathcal{P}\mathbf{u})_i^j\big]\mathrm{d}\mathbf{x}\notag\\
&\leq-(2\mu+\lambda)\sigma\int\divf\mathbf{u}(\mathcal{P}\mathbf{u})_j^i(\mathcal{P}\mathbf{u})_i^j\mathrm{d}\mathbf{x}
+\frac{\sigma(2\mu+\lambda)}{16}\|\divf\mathbf{u}\|_{L^2}^2+C(2\mu+\lambda)\sigma\int|\nabla\mathbf{u}|^2
|\nabla\mathcal{Q}\mathbf{u}|^2\mathrm{d}\mathbf{x}\notag\\
&\leq-(2\mu+\lambda)\sigma\int\divf\mathbf{u}(\mathcal{P}\mathbf{u})_j^i(\mathcal{P}\mathbf{u})_i^j\mathrm{d}\mathbf{x}
+\frac{\sigma(2\mu+\lambda)}{16}\|\divf\mathbf{u}\|_{L^2}^2+C\sigma\|\nabla\mathbf{u}\|_{L^4}^4
+C(2\mu+\lambda)^2\sigma\|\divv\mathbf{u}\|_{L^4}^4\notag\\
&=-(2\mu+\lambda)\sigma\int\divf\mathbf{u}(\mathcal{P}\mathbf{u})_j^i(\mathcal{P}\mathbf{u})_i^j\mathrm{d}\mathbf{x}
+\frac{\sigma(2\mu+\lambda)}{16}\|\divf\mathbf{u}\|_{L^2}^2+C\sigma\|\nabla\mathbf{u}\|_{L^4}^4
+\frac{C\sigma}{(2\mu+\lambda)^2}\|F+P-\Bar{P}\|_{L^4}^4\notag\\
&\leq -\sigma\int F_t(\mathcal{P}\mathbf{u})_j^i(\mathcal{P}\mathbf{u})_i^j\mathrm{d}\mathbf{x}
+C\sigma\big(1+\|\nabla\mathbf{u}\|_{L^2}^2\big)
\big(1+\|\sqrt{\rho}\dot{\mathbf{u}}\|_{L^2}^2+\|\nabla\mathbf{u}\|_{L^2}^2\big)
\big(\|\sqrt{\rho}\dot{\mathbf{u}}\|_{L^2}^2+\|\nabla\mathbf{u}\|_{L^2}^2\big)\notag\\
&\quad
+\frac{\sigma(2\mu+\lambda)}{16}\|\divf\mathbf{u}\|_{L^2}^2
+\frac{C}{(2\mu+\lambda)^2}\|P-\Bar{P}\|_{L^4}^4,
\end{align}
where in the last inequality we have employed
\begin{align*}
&-(2\mu+\lambda)\sigma\int(\divv\mathbf{u}_t+\mathbf{u}\cdot\nabla\divv\mathbf{u})(\mathcal{P}\mathbf{u})_j^i(\mathcal{P}\mathbf{u})_i^j\mathrm{d}\mathbf{x}\notag\\
&=-\sigma\int\big[(P-\bar{P})_t+\mathbf{u}\cdot\nabla(P-\bar{P})+\mathbf{u}\cdot\nabla F\big](\mathcal{P}\mathbf{u})_j^i(\mathcal{P}\mathbf{u})_i^j\mathrm{d}\mathbf{x}
-\sigma\int F_t(\mathcal{P}\mathbf{u})_j^i(\mathcal{P}\mathbf{u})_i^j\mathrm{d}\mathbf{x}\notag\\
&=\sigma\int\big[\gamma P\divv\mathbf{u}-(\gamma-1)\overline{P\divv\mathbf{u}}-\mathbf{u}\cdot\nabla F
\big](\mathcal{P}\mathbf{u})_j^i(\mathcal{P}\mathbf{u})_i^j\mathrm{d}\mathbf{x}
-\sigma\int F_t(\mathcal{P}\mathbf{u})_j^i(\mathcal{P}\mathbf{u})_i^j\mathrm{d}\mathbf{x}\notag\\
&\leq -\sigma\int F_t(\mathcal{P}\mathbf{u})_j^i(\mathcal{P}\mathbf{u})_i^j\mathrm{d}\mathbf{x}
+C\sigma\|\nabla\mathcal{P}\mathbf{u}\|_{L^4}^2\|\nabla\mathbf{u}\|_{L^2}
+C\sigma\|\nabla\mathbf{u}\|_{L^2}^3
+C\sigma\|\mathbf{u}\|_{L^4}\|\nabla\mathcal{P}\mathbf{u}\|_{L^8}^2\|\nabla F\|_{L^2}\notag\\
&\leq -\sigma\int F_t(\mathcal{P}\mathbf{u})_j^i(\mathcal{P}\mathbf{u})_i^j\mathrm{d}\mathbf{x}
+C\sigma(1+\|\nabla\mathbf{u}\|_{L^2}^2)(1+\|\sqrt{\rho}\dot{\mathbf{u}}\|_{L^2}^2+\|\nabla\mathbf{u}\|_{L^2}^2)
(\|\sqrt{\rho}\dot{\mathbf{u}}\|_{L^2}^2+\|\nabla\mathbf{u}\|_{L^2}^2)
\end{align*}
owing to \eqref{z1.4} and \eqref{z3.26}. Indeed, due to Poincar\'{e}'s inequality, Lemma $\ref{GN}$, and Lemma $\ref{E0}$, the term involving $F_t$ in \eqref{x1} can be handled as follows
\begin{align}\label{z3.32}
&-\sigma\int F_t
(\mathcal{P}\mathbf{u})_{j}^{i}(\mathcal{P}\mathbf{u})_{i}^{j}\mathrm{d}\mathbf{x}\notag\\
&=-\frac{\mathrm{d}}{\mathrm{d}t}\int\sigma F
(\mathcal{P}\mathbf{u})_{j}^{i}(\mathcal{P}\mathbf{u})_{i}^{j}\mathrm{d}\mathbf{x}
+\sigma'\int F
(\mathcal{P}\mathbf{u})_{j}^{i}(\mathcal{P}\mathbf{u})_{i}^{j}\mathrm{d}\mathbf{x}
+\sigma\int F
(\mathcal{P}\mathbf{u})_{jt}^{i}(\mathcal{P}\mathbf{u})_{i}^{j}\mathrm{d}\mathbf{x}
+\sigma\int F
(\mathcal{P}\mathbf{u})_{j}^{i}(\mathcal{P}\mathbf{u})_{it}^{j}\mathrm{d}\mathbf{x}\notag\\
&=-\frac{\mathrm{d}}{\mathrm{d}t}\int\sigma F
(\mathcal{P}\mathbf{u})_{j}^{i}(\mathcal{P}\mathbf{u})_{i}^{j}\mathrm{d}\mathbf{x}
+\sigma'\int F
(\mathcal{P}\mathbf{u})_{j}^{i}(\mathcal{P}\mathbf{u})_{i}^{j}\mathrm{d}\mathbf{x}
+\sigma\int F
\big(\mathcal{P}(\dot{\mathbf{u}}-\mathbf{u}\cdot\nabla\mathbf{u})\big)_{j}^{i}(\mathcal{P}\mathbf{u})_{i}^{j}\mathrm{d}\mathbf{x}\notag\\
&\quad+\sigma\int F
(\mathcal{P}\mathbf{u})_{j}^{i}\big(\mathcal{P}(\dot{\mathbf{u}}-\mathbf{u}\cdot\nabla\mathbf{u})\big)_{i}^{j}\mathrm{d}\mathbf{x}\notag\\
&\leq C\|\nabla F\|_{L^2}\|\nabla\mathcal{P}\mathbf{u}\|_{L^4}^2
+C\sigma\|\nabla F\|_{L^2}\|\nabla\mathcal{P}\mathbf{u}\|_{L^4}
\big(\|\nabla\dot{\mathbf{u}}\|_{L^2}
+\|\nabla\mathbf{u}\|_{L^4}^2
+\|\mathbf{u}\|_{L^\infty}\|\nabla^2\mathcal{P}\mathbf{u}\|_{L^2}
\big)\notag\\
&\quad
-\frac{\mathrm{d}}{\mathrm{d}t}\int\sigma F
(\mathcal{P}\mathbf{u})_{j}^{i}(\mathcal{P}\mathbf{u})_{i}^{j}\mathrm{d}\mathbf{x}\notag\\
&\leq-\frac{\mathrm{d}}{\mathrm{d}t}\int\sigma F
(\mathcal{P}\mathbf{u})_{j}^{i}(\mathcal{P}\mathbf{u})_{i}^{j}\mathrm{d}\mathbf{x}
+C\big(1+\|\nabla\mathbf{u}\|_{L^{2}}^{2}\big)
\big(1+\sigma\|\sqrt{\rho}\dot{\mathbf{u}}\|_{L^{2}}^{2}+
\|\nabla\mathbf{u}\|_{L^{2}}^{2}\big)
\big(\|\sqrt{\rho}\dot{\mathbf{u}}\|_{L^{2}}^{2}+\|\nabla\mathbf{u}\|_{L^{2}}^{2}\big)
\notag\\&\quad+\frac{\tilde{C}\sigma}{16}\|\nabla \dot{\mathbf{u}}\|_{L^2}^2
+\frac{C}{(2\mu+\lambda)^4}\|P-\Bar{P}\|_{L^4}^4.
\end{align}
Using similar arguments, we have that
\begin{align}\label{z3.33}
J_{33}&=-(2\mu+\lambda)\sigma\int(\divv\mathbf{u})^2\nabla\mathbf{u}:\nabla\mathbf{u}\mathrm{d}\mathbf{x}\notag\\
&\leq C(2\mu+\lambda)\sigma\int(\divv\mathbf{u})^2\big(|\nabla\mathcal{P}\mathbf{u}|^2+|\nabla\mathcal{Q}\mathbf{u}|^2\big)\mathrm{d}\mathbf{x}\notag\\
&\leq C(2\mu+\lambda)^2\sigma\|\divv\mathbf{u}\|_{L^4}^4+C\sigma\|\nabla\mathcal{P}\mathbf{u}\|_{L^4}^4\notag\\
&\leq\frac{C\sigma}{(2\mu+\lambda)^2}\|F+P-\Bar{P}\|_{L^4}^4+C\sigma(\|\sqrt{\rho}\dot{\mathbf{u}}\|_{L^{2}}^{2}\|\nabla\mathbf{u}\|_{L^{2}}^{2}
+\|\nabla\mathbf{u}\|_{L^{2}}^{4})\notag\\
&\leq C\sigma (1+\|\sqrt{\rho}\dot{\mathbf{u}}\|_{L^2}^2+\|\nabla\mathbf{u}\|_{L^2}^2)(\|\sqrt{\rho}\dot{\mathbf{u}}\|_{L^2}^2+\|\nabla\mathbf{u}\|_{L^2}^2)
+\frac{C}{(2\mu+\lambda)^2}\|P-\Bar{P}\|_{L^4}^4,\\
J_{34}&=(2\mu+\lambda)\sigma\int\partial_j\mathbf{u}\cdot\nabla\dot{u}^j\divv\mathbf{u}\mathrm{d}\mathbf{x}\notag\\
&\leq C\sigma\left(\|\nabla\mathbf{u}\|_{L^4}\|\nabla\dot{\mathbf{u}}\|_{L^2}\|F\|_{L^4}+\|\nabla\mathbf{u}\|_{L^2}\|\nabla\dot{\mathbf{u}}\|_{L^2}\|P-\Bar{P}\|_{L^\infty}\right)\notag\\
&\leq C\sigma\left(\|\nabla\mathbf{u}\|_{L^4}\|\nabla\dot{\mathbf{u}}\|_{L^2}\|\nabla F\|_{L^2}+\|\nabla\mathbf{u}\|_{L^2}\|\nabla\dot{\mathbf{u}}\|_{L^2}\right)\notag\\
&\leq \frac{\tilde{C}\sigma}{16}\|\nabla \dot{\mathbf{u}}\|_{L^2}^2
+C\sigma (1+\|\sqrt{\rho}\dot{\mathbf{u}}\|_{L^2}^2+\|\nabla\mathbf{u}\|_{L^2}^2)(\|\sqrt{\rho}\dot{\mathbf{u}}\|_{L^2}^2+\|\nabla\mathbf{u}\|_{L^2}^2)
+\frac{C}{(2\mu+\lambda)^2}\|P-\Bar{P}\|_{L^4}^4.\label{z3.34}
\end{align}

It remains to estimate the boundary terms $\mathcal{B}_{i}\ (i\in\{1,2,3,4,5\})$. We infer from \eqref{a3}, \eqref{z2.9}, \eqref{z3.26}, and the trace theorem that
\begin{align}\label{z3.35}
&\mathcal{B}_1+\mathcal{B}_3+\mathcal{B}_4
=\sigma\int_{\partial\Omega}[-P_{t}-\divv(P\mathbf{u})+(2\mu+\lambda)\divv\mathbf{u}_t+
(2\mu+\lambda)\divv(\mathbf{u}\divv\mathbf{u})]
\dot{\mathbf{u}}\cdot\mathbf{n}\mathrm{ds}
\notag\\
&=\sigma\int_{\partial\Omega}(F-\bar{P})_t\dot{\mathbf{u}}\cdot\mathbf{n}\mathrm{ds}
+\sigma\int_{\partial\Omega}\divv[\mathbf{u}(F-\bar{P})]\dot{\mathbf{u}}\cdot\mathbf{n}\mathrm{ds}\notag\\
&=-\sigma\int_{\partial\Omega}(F-\bar{P})_t\mathbf{u}\cdot\nabla\mathbf{n}\cdot \mathbf{u}\mathrm{ds}
+\sigma\int_{\partial\Omega}\divv[\mathbf{u}(F-\bar{P})]\dot{\mathbf{u}}\cdot\mathbf{n}\mathrm{ds}\notag\\
&=-\frac{\mathrm{d}}{\mathrm{d}t}\int_{\partial\Omega}\sigma (F-\bar{P})(\mathbf{u}\cdot\nabla\mathbf{n}\cdot \mathbf{u})\mathrm{ds}+\sigma'\int_{\partial\Omega}(F-\bar{P})(\mathbf{u}\cdot\nabla \mathbf{n}\cdot \mathbf{u})\mathrm{ds}+\sigma\int_{\partial\Omega}(F-\bar{P})(\dot{\mathbf{u}}\cdot\nabla \mathbf{n}\cdot \mathbf{u})\mathrm{ds}\notag\\
&\quad+\sigma\int_{\partial\Omega}(F-\bar{P})(\mathbf{u}\cdot\nabla \mathbf{n}\cdot \dot{\mathbf{u}})\mathrm{ds}-\sigma\int_{\partial\Omega}(F-\bar{P})(\mathbf{u}\cdot\nabla\mathbf{u}\cdot\nabla \mathbf{n}\cdot \mathbf{u})\mathrm{ds}\notag\\
&\quad-\sigma\int_{\partial\Omega}(F-\bar{P})(\mathbf{u}\cdot\nabla \mathbf{n}\cdot (\mathbf{u}\cdot\nabla)\mathbf{u})\mathrm{ds}+\sigma\int_{\partial\Omega}\divv[\mathbf{u}(F-\bar{P})]\dot{\mathbf{u}}\cdot\mathbf{n}\mathrm{ds}\notag\\
&\leq
-\frac{\mathrm{d}}{\mathrm{d}t}\int_{\partial\Omega}\sigma (F-\bar{P})(\mathbf{u}\cdot\nabla\mathbf{n}\cdot \mathbf{u})\mathrm{ds}
+\sigma\int_{\partial\Omega}\divv[\mathbf{u}(F-\bar{P})]\dot{\mathbf{u}}\cdot\mathbf{n}\mathrm{ds}
+\frac{\tilde{C}\sigma}{16}\|\nabla\dot{\mathbf{u}}\|_{L^2}^2
\notag\\&\quad
+\frac{C}{(2\mu+\lambda)^2}\|P-\Bar{P}\|_{L^4}^4
+C\big(1+\|\nabla\mathbf{u}\|_{L^{2}}^{2}\big)\big(1+\sigma\|\sqrt{\rho}\dot{\mathbf{u}}\|_{L^{2}}^{2}
+\|\nabla\mathbf{u}\|_{L^{2}}^{2}\big)\big(\|\sqrt{\rho}\dot{\mathbf{u}}\|_{L^{2}}^{2}
+\|\nabla\mathbf{u}\|_{L^{2}}^{2}\big),
\end{align}
where we have used the following estimates
\begin{align*}
 &\left|\int_{\partial\Omega}(F-\bar{P})[(\mathbf{u}\cdot\nabla \mathbf{n}\cdot \mathbf{u})+
 (\dot{\mathbf{u}}\cdot\nabla \mathbf{n}\cdot \mathbf{u}+\mathbf{u}\cdot\nabla \mathbf{n}\cdot \dot{\mathbf{u}})]\mathrm{ds}\right|\leq C(1+\|\nabla F\|_{L^2})\big(\|\mathbf{u}\|_{H^1}^2+\|\mathbf{u}\|_{H^1}\|\dot{\mathbf{u}}\|_{H^1}\big),\\
&\left|\int_{\partial\Omega}(F-\bar{P})(\mathbf{u}\cdot\nabla)\mathbf{u}\cdot\nabla \mathbf{n}\cdot \mathbf{u}\mathrm{ds}\right|
=\left|\int_{\partial\Omega}(F-\bar{P})(\mathbf{u}\cdot\mathbf{n}^\bot)(\mathbf{n}^\bot\cdot\nabla)\mathbf{u}\cdot\nabla \mathbf{n}\cdot \mathbf{u}\mathrm{ds}\right|\notag\\
&=\left|\int\nabla^\bot\cdot[(F-\bar{P})(\mathbf{u}\cdot\mathbf{n}^\bot)\nabla\mathbf{u}\cdot\nabla\mathbf{n}\cdot \mathbf{u}]\mathrm{d}\mathbf{x}\right|
=\left|\int\nabla u^i\cdot\nabla^\bot(\nabla_i\mathbf{n}\cdot\mathbf{u}(F-\bar{P})
(\mathbf{u}\cdot\mathbf{n}^\bot))\mathrm{d}\mathbf{x}\right|\notag\\
&\leq C(1+\|\nabla F\|_{L^2})\|\nabla\mathbf{u}\|_{L^4}\big(\|\mathbf{u}\|_{L^4}^2+\|\nabla\mathbf{u}\|_{L^4}\|\mathbf{u}\|_{L^4}+\|\mathbf{u}\|_{L^8}^2),\notag\\
&\left|\int_{\partial\Omega}(F-\bar{P})\mathbf{u}\cdot\nabla \mathbf{n}\cdot (\mathbf{u}\cdot\nabla)\mathbf{u}\mathrm{ds}\right|\leq C(1+\|\nabla F\|_{L^2})\|\nabla\mathbf{u}\|_{L^4}
\big(\|\mathbf{u}\|_{L^4}^2+\|\nabla\mathbf{u}\|_{L^4}\|\mathbf{u}\|_{L^4}
+\|\mathbf{u}\|_{L^8}^2\big).
\end{align*}
Moreover, it follows from the divergence theorem that
\begin{align}\label{z3.38}
  &\mathcal{B}_2+\mathcal{B}_5+\sigma\int_{\partial\Omega}\divv[\mathbf{u}(F-\bar{P})]\dot{\mathbf{u}}\cdot\mathbf{n}\mathrm{ds}\notag\\
  &=\sigma\int_{\partial\Omega}[P-(2\mu+\lambda)\divv\mathbf{u}]\dot{\mathbf{u}}\cdot\nabla\mathbf{u}\cdot\mathbf{n}\mathrm{ds}
  +\sigma\int_{\partial\Omega}\divv[\mathbf{u}(F-\bar{P})]\dot{\mathbf{u}}\cdot\mathbf{n}\mathrm{ds}\notag\\
  &=-\sigma\int_{\partial\Omega}(F-\bar{P})\dot{\mathbf{u}}\cdot\nabla\mathbf{u}\cdot\mathbf{n}\mathrm{ds}
  +\sigma\int_{\partial\Omega}\divv[\mathbf{u}(F-\bar{P})]\dot{\mathbf{u}}\cdot\mathbf{n}\mathrm{ds}\notag\\
  &=\sigma\int\divv\big[\dot{\mathbf{u}}\divv\big(\mathbf{u}(F-\bar{P})\big)-(F-\bar{P})\dot{\mathbf{u}}\cdot\nabla\mathbf{u}\big]
  \mathrm{d}\mathbf{x}\notag\\
  &\leq C\sigma(1+\|\nabla F\|_{L^2})\|\nabla\mathbf{u}\|_{L^4}(\|\nabla\dot{\mathbf{u}}\|_{L^2}+\|\dot{\mathbf{u}}\|_{L^4})
  +C\sigma\|\nabla F\|_{L^2}\|\nabla\dot{\mathbf{u}}\|_{L^2}\|\mathbf{u}\|_{L^\infty}\notag\\
  &\quad+\sigma\int(F-\bar{P})(\dot{\mathbf{u}}\cdot\nabla\divv\mathbf{u}-\dot{\mathbf{u}}\cdot\nabla\divv\mathbf{u})
  \mathrm{d}\mathbf{x}+\sigma\int\dot{u}^iu^j\partial_i\partial_jF\mathrm{d}\mathbf{x}
  \notag\\
  &\leq \frac{\tilde{C}\sigma}{16}\|\nabla\dot{\mathbf{u}}\|_{L^2}^2
  +C\sigma\big(1+\|\sqrt{\rho}\dot{\mathbf{u}}\|_{L^{2}}^{2}+\|\nabla\mathbf{u}\|_{L^{2}}^{2}\big)
  \big(\|\sqrt{\rho}\dot{\mathbf{u}}\|_{L^{2}}^{2}+\|\nabla\mathbf{u}\|_{L^{2}}^{2}\big)
 +\frac{C}{(2\mu+\lambda)^2}\|P-\Bar{P}\|_{L^4}^4,
 \end{align}
 where one has used
 \begin{equation*}
\int\dot{u}^iu^j\partial_i\partial_jF\mathrm{d}\mathbf{x}
=-\int\big(\dot{u}^i_ju^j\partial_iF+\dot{u}^iu^j_j\partial_iF\big)\mathrm{d}\mathbf{x}
\leq C\|\nabla F\|_{L^2}\big(\|\nabla\dot{\mathbf{u}}\|_{L^2}\|\mathbf{u}\|_{L^\infty}
+\|\dot{\mathbf{u}}\|_{L^4}\|\nabla\mathbf{u}\|_{L^4}\big).
 \end{equation*}

According to Lemma \ref{dot}, we can choose the constant $\tilde{C}$ such that
\begin{equation*}
\tilde{C}\|\nabla\dot{\mathbf{u}}\|_{L^2}^2\leq
C_1\tilde{C}(\|\divv\dot{\mathbf{u}}\|_{L^2}^2+\|\curl\dot{\mathbf{u}}\|_{L^2}^2+\|\nabla\mathbf{u}\|_{L^4}^4)
\leq \mu(\|\divf\mathbf{u}\|_{L^2}^2+\|\curl\dot{\mathbf{u}}\|_{L^2}^2)   +C\|\nabla\mathbf{u}\|_{L^4}^4.
\end{equation*}
Thus, substituting \eqref{z3.30}--\eqref{z3.38} into \eqref{z3.29}, one derives from \eqref{z3.23}, Lemmas $\ref{Hodge}$, $\ref{l3.1}$, and $\ref{l3.2}$ that
\begin{align}\label{z3.40}
&\frac{\mathrm{d}}{\mathrm{d}t}\left(\frac{1}{2}\int\sigma\rho|\dot{\mathbf{u}}|^2\mathrm{d}\mathbf{x}
+\int\sigma F(\mathcal{P}\mathbf{u})_j^i(\mathcal{P}\mathbf{u})_i^j\mathrm{d}\mathbf{x}
+\int_{\partial\Omega}\sigma (F-\bar{P})(\mathbf{u}\cdot\nabla\mathbf{n}\cdot \mathbf{u})\mathrm{ds}\right)\notag\\ &\quad
+\frac{(2\mu+\lambda)\sigma}{2}\|\divf\mathbf{u}\|_{L^2}^2
+\frac{\mu\sigma}{2}\|\curl\dot{\mathbf{u}}\|_{L^2}^2
\notag\\
&\leq(2+M)^{e^{3D_2(1+C_0)^2}}\big(1+\sigma\|\sqrt{\rho}\dot{\mathbf{u}}\|_{L^{2}}^{2}
+\|\nabla\mathbf{u}\|_{L^{2}}^{2}\big)\big(\|\sqrt{\rho}\dot{\mathbf{u}}\|_{L^{2}}^{2}+
\|\nabla\mathbf{u}\|_{L^{2}}^{2}\big)+\frac{C}{(2\mu+\lambda)^2}\|P-\Bar{P}\|_{L^4}^4,
\end{align}
where we observe that
\begin{align}
\left|\int\sigma F(\mathcal{P}\mathbf{u})_j^i(\mathcal{P}\mathbf{u})_i^j\mathrm{d}\mathbf{x}\right|
+\left|\int_{\partial\Omega}\sigma (F-\bar{P})(\mathbf{u}\cdot\nabla\mathbf{n}\cdot \mathbf{u})\mathrm{ds}\right|
&\leq C\sigma\big(\|\nabla F\|_{L^2}\|\nabla\mathcal{P}\mathbf{u}\|_{L^3}^2
+\|\nabla\mathbf{u}\|_{L^2}^2\|\nabla F\|_{L^2}\big)\notag\\
 &\leq \frac{\sigma}{4}\|\sqrt{\rho}\dot{\mathbf{u}}\|_{L^{2}}^2+
 C\|\nabla\mathbf{u}\|_{L^{2}}^2\big(1+\|\nabla\mathbf{u}\|_{L^{2}}^2\big)^3.\notag
\end{align}

Now we define an auxiliary functional $B_2(t)$ as
\begin{equation*}
  B_2(t)\triangleq\int\frac{\sigma}{2}\rho|\dot{\mathbf{u}}|^2\mathrm{d}\mathbf{x}
  +\int \sigma F(\mathcal{P}\mathbf{u})_j^i(\mathcal{P}\mathbf{u})_i^j\mathrm{d}\mathbf{x}
  +\int_{\partial\Omega}\sigma (F-\bar{P})(\mathbf{u}\cdot\nabla\mathbf{n}\cdot \mathbf{u})\mathrm{ds}+(2+M)^{e^{3D_2(1+C_0)^2}}B_1(t).
\end{equation*}
Then, from Lemma \ref{Hodge}, \eqref{z3.23}, and the definition of $B_1(t)$ in \eqref{z3.18}, one sees that
\begin{equation}\label{z3.41}
  B_2(t)\thicksim \sigma\|\sqrt{\rho}\dot{\mathbf{u}}\|_{L^{2}}^2+B_1(t).
\end{equation}
Setting
\begin{equation*}
  f_2(t)\triangleq 2+B_2(t),~~g_2(t)\triangleq(2+M)^{e^{3D_2(1+C_0)^2}}
  \bigg[\|\sqrt{\rho}\dot{\mathbf{u}}\|_{L^{2}}^2+\|\nabla\mathbf{u}\|_{L^{2}}^2
  +\frac{1}{(2\mu+\lambda)^2}\|P-\Bar{P}\|_{L^4}^4\bigg],
\end{equation*}
we thus get that
\begin{align*}
f'_2(t)\leq g_2(t)f_2(t)
\end{align*}
provided that $\lambda$ satisfies \eqref{lam} with $D\geq 3D_2$.
Therefore, Gronwall's inequality combined with \eqref{z3.41} and Lemmas $\ref{l3.1}\text{--}\ref{l3.3}$ implies that
\begin{equation}\label{z3.43}
  \sup_{0\leq t\leq T}\left(\sigma\|\sqrt{\rho}\dot{\mathbf{u}}\|_{L^{2}}^2\right)\leq
  \exp\bigg\{(2+M)^{e^{\frac{7}{2}D_2(1+C_0)^2}}\bigg\}.
\end{equation}
Integrating \eqref{z3.40} with respect to $t$ over $(0,T)$, one infers from \eqref{z3.43} and Lemmas $\ref{l3.1}\text{--}\ref{l3.3}$ that
\begin{align*}
&\int_0^T\big[(2\mu+\lambda)\sigma\|\divf\mathbf{u}\|_{L^2}^2
+\mu\sigma\|\curl\dot{\mathbf{u}}\|_{L^2}^2\big]\mathrm{d}t\notag\\
&\leq C(1+M)+C(1+C_0)(2+M)^{e^{\frac{7}{2}D_2(1+C_0)^2}}
\exp\bigg\{(2+M)^{e^{\frac{7}{2}D_2(1+C_0)^2}}\bigg\}\notag\\
&\leq\exp\bigg\{(2+M)^{e^{\frac{15}{4}D_2(1+C_0)^2}}\bigg\},
\end{align*}
which together with \eqref{z3.43} leads to the desired \eqref{z3.28}.
\end{proof}

Finally, inspired by \cite{DE97,Hoff02}, we derive the upper bound of the density.
\begin{lemma}\label{l3.5}
Under the assumptions \eqref{z3.1}, it holds that
\begin{align*}
0\leq\rho(\mathbf{x},t)\leq\frac{7}{4}\hat{\rho}~\textit{a.e.}~\mathrm{on}~\Omega\times[0,T]
\end{align*}
provided that $\lambda$ satisfies \eqref{lam} with $D\geq 5D_2$.
\end{lemma}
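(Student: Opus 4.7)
The plan is to apply a Zlotnik-type comparison along Lagrangian trajectories, which is the standard device for deriving pointwise upper bounds on the density in compressible flows (cf.\ \cite{HLX12,CAI23}, and the companion work \cite{WWZ}). Fix $x_0 \in \Omega$ and let $X(\cdot;x_0)$ denote the particle path associated to $\mathbf u$, which remains in $\overline{\Omega}$ by the impermeability condition from \eqref{a3}. Setting $\tilde\rho(t) := \rho(X(t),t)$, the continuity equation $\eqref{a1}_1$ along $X$ reduces to $\tilde\rho' + \tilde\rho\,\divv\mathbf u(X(t),t) = 0$, and substituting $\divv\mathbf u = (F + P - \bar P)/(2\mu+\lambda)$ via the effective viscous flux \eqref{z1.4} yields
\begin{equation*}
\tilde\rho'(t) + \frac{\tilde\rho(t)\,P(\tilde\rho(t))}{2\mu + \lambda} \;=\; \frac{\tilde\rho(t)\,\bar P(t)}{2\mu+\lambda} \;-\; \frac{\tilde\rho(t)\,F(X(t), t)}{2\mu+\lambda}.
\end{equation*}
The left-hand damping $\tilde\rho P(\tilde\rho)/(2\mu+\lambda) \sim \tilde\rho^{\gamma+1}/(2\mu+\lambda)$ dominates the linear-in-$\tilde\rho$ forcing $\tilde\rho\bar P/(2\mu+\lambda)$ as soon as $\tilde\rho$ exceeds the ambient pressure level, which is precisely the setting of Zlotnik's lemma: under the bootstrap $\tilde\rho \leq 2\hat\rho$ from \eqref{z3.1} the right-hand side is majorized by $b'(t) := 2\hat\rho(\|\bar P(t)\|_{L^\infty_t} + \|F(\cdot,t)\|_{L^\infty})/(2\mu+\lambda)$, and Zlotnik's conclusion reads $\tilde\rho(t) \leq \max\{\tilde\rho(0), \zeta^*\} + N_0$ whenever $b(t_2)-b(t_1) \leq N_0 + N_1(t_2-t_1)$ and the threshold $\zeta^*$ satisfies $\alpha(\zeta^*) := a(\zeta^*)^{\gamma+1}/(2\mu+\lambda) \geq N_1$.

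The design of \eqref{lam} is such that both $\zeta^*$ and $N_0$ can be made small enough to bring $\max\{\hat\rho,\zeta^*\}+N_0$ below $(7/4)\hat\rho$. The linear-in-$t$ rate $N_1$ is fixed by the sharpest available pointwise bound on $\bar P$: combining the estimate $\bar P(t) \leq a\|\rho\|_{L^\infty}^{\gamma-1}\bar\rho_0$ with $\bar\rho_0 \leq \hat\rho$ (from \eqref{z1.7} and mass conservation) and the bootstrap $\|\rho\|_{L^\infty}\leq 2\hat\rho$ yields $\bar P \leq 2^{\gamma-1}P(\hat\rho)$, producing a subcritical threshold $\zeta^*$ strictly below $2\hat\rho$; iterating the argument with the improved bound on $\|\rho\|_{L^\infty}$ being proved closes the Zlotnik loop. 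The remaining aperiodic part
\begin{equation*}
N_0 \;\leq\; C(\hat\rho)\sup_{0 \leq t_1 \leq t_2 \leq T}\int_{t_1}^{t_2} \frac{\|F(\cdot, s)\|_{L^\infty(\Omega)}}{2\mu + \lambda}\,ds
\end{equation*}
is the quantity that must be rendered small by exploiting the large value of $\lambda$ from \eqref{lam}.

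To estimate $\|F\|_{L^\infty}$ I would apply the Gagliardo--Nirenberg inequality of Lemma \ref{GN} to the mean-zero function $F$, e.g.\ $\|F\|_{L^\infty} \leq C\|F\|_{L^2}^{1/3}\|\nabla F\|_{L^4}^{2/3}$, and then insert the elliptic bounds \eqref{E1} and \eqref{E3} from Lemma \ref{E0} together with Lemma \ref{dot} to replace $\|\rho\dot{\mathbf u}\|_{L^4}$ by $\|\nabla\dot{\mathbf u}\|_{L^2}$ modulo lower-order pieces. Splitting the $s$-integral at $\sigma(T)$ accommodates the initial singular layer: on $[0,\sigma(T)]$ I use the non-weighted $L^2_tL^2_x$ control of $\sqrt\rho\dot{\mathbf u}$ from Lemma \ref{l3.2}, while on $[\sigma(T), T]$ I use the $\sigma$-weighted bounds of Lemma \ref{l3.4}; H\"older in time then reduces $N_0$ to a product of the tower-type expressions appearing in Lemmas \ref{l3.2}--\ref{l3.4} divided by a strictly positive power of $(2\mu+\lambda)$.

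The main obstacle is the delicate bookkeeping of powers of $\lambda$ through every interpolation and elliptic estimate used to bound $\|F\|_{L^\infty}$: several of the intermediate bounds (most notably \eqref{E4} and the $(2\mu+\lambda)$-prefactor on $\|\divv\mathbf u\|_{L^2}^2$ from Lemma \ref{l3.2}) contribute positive powers of $\lambda$ that must be overcome by the single external $1/(2\mu+\lambda)$ in $N_0$, and the entire argument only works if the net exponent of $\lambda$ ends up strictly negative. Given \eqref{lam} with $D$ chosen large enough this is precisely what happens: the double-exponential tower in $(1+C_0)^2$ is killed, $N_0$ drops below the needed margin $\log(7/4)-\log(\zeta^*/\hat\rho)$, and Zlotnik's conclusion gives $\tilde\rho(t)\leq (7/4)\hat\rho$. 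Since $x_0\in\Omega$ is arbitrary and $\rho \geq 0$ is preserved by the continuity equation, the desired bound $0 \leq \rho \leq (7/4)\hat\rho$ a.e.\ on $\Omega\times[0,T]$ follows.
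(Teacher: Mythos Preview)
Your overall architecture---Lagrangian trajectory, rewriting $\divv\mathbf{u}$ through the effective viscous flux, and reducing everything to a bound on $(2\mu+\lambda)^{-1}\int\|F\|_{L^\infty}\,dt$ via Lemma~\ref{GN} and the splitting at $\sigma(T)$---matches the paper's proof, and your description of how Lemmas~\ref{l3.2}--\ref{l3.4} feed into that integral is accurate.

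The gap is in the Zlotnik step itself. Both the damping $\tilde\rho P(\tilde\rho)/(2\mu+\lambda)$ and the $\bar P$-forcing $\tilde\rho\bar P/(2\mu+\lambda)$ carry the \emph{same} factor $(2\mu+\lambda)^{-1}$, so the threshold $\zeta^*$ at which the former dominates the latter is determined solely by $\hat\rho,a,\gamma$ and is \emph{independent of $\lambda$}. With your refined bound $\bar P\le 2^{\gamma-1}P(\hat\rho)$ one gets $\zeta^*=2^{\gamma/(\gamma+1)}\hat\rho$, which exceeds $\tfrac74\hat\rho$ once $\gamma\gtrsim 4.8$; the cruder $\bar P\le P(2\hat\rho)$ gives $\zeta^*=2\hat\rho$ outright. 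Zlotnik then only yields $\tilde\rho\le\zeta^*+N_0$, which does not close the bootstrap to $\tfrac74\hat\rho$ no matter how small $N_0$ is. Your proposed ``iteration'' would use the conclusion being proved to sharpen the $\bar P$ estimate, which is circular unless recast as a separate continuity argument that you have not supplied.

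The paper avoids this by a direct contradiction: assume $\rho$ first reaches $\tfrac74\hat\rho$ at time $t_1$ along a trajectory, pick the last earlier time $t_0$ where $\rho=\tfrac32\hat\rho$, and work on $[t_0,t_1]$ where $\rho\in[\tfrac32\hat\rho,\tfrac74\hat\rho]$. In the short-time case $t_1\le 1$ the entire $(P-\bar P)$ contribution is bounded by $C(\hat\rho)(t_1-t_0)/(2\mu+\lambda)\le C(\hat\rho)/(2\mu+\lambda)$ \emph{regardless of sign}, so large $\lambda$ kills it together with the $F$-integral and contradicts the fixed positive increment $\ln(7/6)$. No threshold comparison is needed. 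For $t_1>1$ the paper passes to a cubic-in-$\rho$ identity and uses Young's inequality to absorb the forcing into the damping over the long interval. If you want to run Zlotnik, you would need to carry out the iteration rigorously or restrict $\gamma$; the contradiction argument is what makes the result hold for all $\gamma>1$.
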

\begin{proof}
Let $\mathbf{y}\in\Omega$ and define the corresponding particle path $\mathbf{x}(t)$ by
\begin{align*}
\begin{cases}
\mathbf{\dot{x}}(t,\mathbf{y})=\mathbf{u}(\mathbf{x}(t,\mathbf{y}),\mathbf{y)},\\
\mathbf{\dot{x}}(t_0,\mathbf{y})=\mathbf{y}.
\end{cases}
\end{align*}
Assume that there exists $t_1\leq T$ satisfying $\rho(\mathbf{x}(t_1), t_1) = \frac{7}{4}\hat{\rho}$, we take a minimal value of $t_1$ and then choose a maximal value of $t_0<t_1$ such that $\rho(\mathbf{x}(t_0), t_0)=\frac{3}{2}\hat{\rho}$. Thus, $\rho(\mathbf{x}(t),t)\in[\frac{3}{2}\hat{\rho},\frac{7}{4}\hat{\rho}]$ for $t\in[t_0,t_1]$. We divide
the argument into two cases.

\textbf{Case 1:} $t_0<t_1\leq1$. One gets from $\eqref{a1}_1$ and \eqref{z1.4} that
\begin{equation*}
(2\mu+\lambda)\frac{\mathrm{d}}{\mathrm{d}t}\ln\rho(\mathbf{x}(t),t)
+P(\rho(\mathbf{x}(t),t))-\bar{P}=-F(\mathbf{x}(t),t),
\end{equation*}
where $\frac{\mathrm{d}\rho}{\mathrm{d}t}\triangleq \rho_t+\mathbf{u}\cdot\nabla\rho$. Integrating the above equality from $t_0$ to $t_1$ and abbreviating $\rho(\mathbf{x},t)$ by $\rho(t)$ for
convenience, we have that
\begin{equation}\label{z3.44}
\ln\rho(\tau)\big|_{t_0}^{t_1}+\frac{1}{2\mu+\lambda}\int_{t_0}^{t_1}\left(P(\rho(\tau))
-\bar{P}\right)\mathrm{d}\tau=-\frac{1}{2\mu+\lambda}\int_{t_0}^{t_1}F(\mathbf{x}(\tau),\tau)\mathrm{d}\tau.
\end{equation}
It follows from Lemmas $\ref{GN}$, $\ref{dot}$, and $\ref{l3.1}\text{--}\ref{l3.4}$ that
\begin{align}\label{z3.45}
&\int_0^{\sigma(T)}\left\|F(\cdot,t)\right\|_{L^\infty}\mathrm{d}t\leq C\int_0^{\sigma(T)}\left\|F\right\|_{L^2}^{\frac13}\left\|\nabla F\right\|_{L^4}^{\frac23}\mathrm{d}t\notag\\
&\leq C\int_0^{\sigma(T)}\Big((2\mu+\lambda)^{\frac13}
\|\divv\mathbf{u}\|_{L^2}^{\frac13}+\|P-\bar{P}\|_{L^2}^{\frac13}\Big)
\Big(\|\sqrt{\rho}\dot{\mathbf{u}}\|_{L^4}^{\frac23}
+\|\nabla\mathbf{u}\|_{L^4}^{\frac23}\Big)\mathrm{d}t\notag\\
&\leq C\sup_{0\leq t\leq T}\Big((2\mu+\lambda)^{\frac13}\|\divv\mathbf{u}\|_{L^2}^{\frac13}
+\|P-\bar{P}\|_{L^2}^{\frac13}\Big)
\int_{0}^{\sigma(T)}
\Big(
\|\nabla\dot{\mathbf{u}}\|_{L^{2}}^{\frac{2}{3}}+\|\nabla\mathbf{u}\|_{L^2}^{\frac43}
+\|\nabla\mathbf{u}\|_{L^4}^{\frac23}\Big) \mathrm{d}t\notag\\
&\leq C\Big((2\mu+\lambda)^{\frac{1}{6}}(2+M)^{\frac{1}{6}\mathrm{e}^{{2D_{2}
(1+C_{0})^{2}}}}+1\Big)
\int_{0}^{\sigma(T)}
\Big(\|\nabla\dot{\mathbf{u}}
\|_{L^{2}}^{\frac{2}{3}}+\|\sqrt{\rho}\dot{\mathbf{u}}
\|_{L^{2}}^{\frac{1}{3}}\|\nabla\dot{\mathbf{u}}
\|_{L^{2}}^{\frac{1}{3}}\Big) \mathrm{d}t\notag\\
&\leq(2\mu+\lambda)^{\frac16}(2+M)^{\frac12\mathrm{e}^{2D_2(1+C_0)^2}}\bigg[\int_0^{\sigma(T)}\|\sqrt{\rho}\dot{\mathbf{u}}\|_{L^2}^{\frac23}\mathrm{d}t
+\int_0^{\sigma(T)}\big(t\|\divf\mathbf{u}\|_{L^2}^2+t\|\curl\dot{\mathbf{u}}\|_{L^2}^2\big)^{\frac13}t^{-\frac13}\mathrm{d}t\bigg]\notag\\
&\leq(2\mu+\lambda)^{\frac{1}{6}}(2+M)^{\frac{1}{2}\mathrm{e}^{2D_{2}(1+C_{0})^{2}}} \Bigg[\bigg(\int_{0}^{\sigma(T)}\|\sqrt{\rho}\dot{\mathbf{u}}\|_{L^{2}}^{2}\mathrm{d}t\bigg)^{\frac{1}{3}}\bigg(\int_{0}^{\sigma(T)}1\mathrm{d}t\bigg)^{\frac{2}{3}}
\notag\\&\quad+\bigg(\int_0^{\sigma(T)}\big(t\|\divf\mathbf{u}\|_{L^2}^2+t\|\curl\dot{\mathbf{u}}\|_{L^2}^2\big)
\mathrm{d}t\bigg)^{\frac13}\bigg(\int_0^{\sigma(T)}
t^{-\frac12}\mathrm{d}t\bigg)^{\frac23}\Bigg]\notag\\
&\leq(2\mu+\lambda)^{\frac{1}{6}}\exp\Big\{(2+M)^{\mathrm{e}^{\frac{9}{2}D_{2}(1+C_{0})^{2}}}\Big\}.
\end{align}
Since $\rho(t)$ takes values in $[\frac{3}{2}\hat{\rho},\frac{7}{4}\hat{\rho}]\subset [\hat{\rho},2\hat{\rho}]$ and $P(\rho)$ is increasing on $[0,\infty)$, we obtain, after substituting $\eqref{z3.45}$ into $\eqref{z3.44}$, that
\begin{equation*}
    \ln\left(\frac{7}{4}\hat{\rho}\right)-\ln\left(\frac{3}{2}\hat{\rho}\right)+\frac{1}{2\mu+\lambda}\int_{t_0}^{t_1}\left(P(\rho(\tau))
-\bar{P}\right)\mathrm{d}\tau
    \leq\frac{1}{\left(2\mu+\lambda\right)^{\frac{5}{6}}}
    \exp\bigg\{\left(2+M\right)^{\mathrm{e}^{\frac{19}{4}D_{2}(1+C_{0})^{2}}}\bigg\},
\end{equation*}
which is impossible if $\lambda$ satisfies \eqref{lam} with $D\geq 5D_2$. Therefore, we conclude that there is no time $t_1$ such that $\rho(\mathbf{x}(t_1), t_1) = \frac{7}{4}\hat{\rho}$. Since $\mathbf{y}\in\Omega$ is arbitrary, it follows that $\rho<\frac{7}{4}\hat{\rho}$ on $\Omega\times[0,T]$.

\textbf{Case 2:} $t_1>1$. From $\eqref{a1}_1$ and \eqref{z1.4}, one sees that
\begin{equation*}
    \frac{\mathrm{d}}{\mathrm{d}t}\rho(t)+\frac{1}{2\mu+\lambda}\rho(t)(P(\rho(t))-\bar{P})=-\frac{1}{2\mu+\lambda}\rho(t)F(\mathbf{x}(t),t).
\end{equation*}
Multiplying the above equality by $|\rho(t)|\rho(t)$, we have
\begin{equation}\label{z3.46}
    \frac{1}{3}\frac{\mathrm{d}}{\mathrm{d}t}\left|\rho(t)\right|^3+\frac{1}{2\mu+\lambda}|\rho(t)|^3(P(\rho(t))-\bar{P})
    =-\frac{1}{2\mu+\lambda}|\rho(t)|^3F(\mathbf{x}(t),t).
\end{equation}
If $\rho(t)$ takes values in $[\frac{3}{2}\hat{\rho},\frac{7}{4}\hat{\rho}]$, by integrating \eqref{z3.46} from $t_0$ to $t_1$, we obtain from Young's inequality that
\begin{align*}
\hat{\rho}^{3}
&\leq\frac{C}{2\mu+\lambda}\int_{0}^{1}\|F(\cdot,t)\|_{L^\infty}\mathrm{d}t+
\frac{C}{2\mu+\lambda}\int_{1}^{T}\|F(\cdot,t)\|_{L^\infty}^3\mathrm{d}t\notag\\
&\leq\frac{1}{(2\mu+\lambda)^{\frac{5}{6}}}
\exp\bigg\{(2+M)^{\mathrm{e}^{\frac{9}{2}D_{2}(1+C_{0})^{2}}}\bigg\}
+\frac{C}{2\mu+\lambda}\int_{1}^{T}\|F(\cdot,t)\|_{L^{2}}\|\nabla F(\cdot,t)\|_{L^{4}}^{2}\mathrm{d}t\notag\\
&\leq\frac{1}{(2\mu+\lambda)^{\frac{5}{6}}}
\exp\bigg\{(2+M)^{\mathrm{e}^{\frac{9}{2}D_{2}(1+C_{0})^{2}}}\bigg\}
+\frac{(2+M)^{\mathrm{e}^{2D_2(1+C_0)^2}}}{(2\mu+\lambda)^{\frac{1}{2}}}\int_{0}^{T}
\|\nabla\dot{\mathbf{u}}\|_{L^2}\big(\|\sqrt{\rho}\dot{\mathbf{u}}\|_{L^2}
+\|\nabla\dot{\mathbf{u}}\|_{L^2}\big)\mathrm{d}t\notag\\
&\leq\frac{1}{(2\mu+\lambda)^{\frac{5}{6}}}
\exp\bigg\{(2+M)^{\mathrm{e}^{\frac{9}{2}D_{2}(1+C_{0})^{2}}}\bigg\}
+\frac{1}{(2\mu+\lambda)^{\frac{1}{2}}}(2+M)^{\mathrm{e}^{2D_2(1+C_0)^2}}
\exp\bigg\{(2+M)^{\mathrm{e}^{\frac{9}{2}D_{2}(1+C_{0})^{2}}}\bigg\}\notag\\
&\leq\frac{1}{(2\mu+\lambda)^{\frac12}}
\exp\bigg\{(2+M)^{\mathrm{e}^{\frac{19}{4}D_{2}(1+C_{0})^{2}}}\bigg\},
\end{align*}
which is impossible if $\lambda$ satisfies \eqref{lam} with $D\geq 5D_2$. Hence, we conclude that there is no time $t_1$ such that $\rho(\mathbf{x}(t_1), t_1) = \frac{7}{4}\hat{\rho}$. Since $\mathbf{y}\in\Omega$ is arbitrary, it follows that $\rho<\frac{7}{4}\hat{\rho}$ on $\Omega\times[0,T]$.
\end{proof}

Now we are ready to prove Proposition $\ref{p3.1}$.
\begin{proof}[Proof of Proposition \ref{p3.1}.]
Proposition \ref{p3.1} follows from the Lemmas $\ref{l3.2}$--$\ref{l3.5}$ provided that $\lambda$ satisfies \eqref{lam} with $D\geq 5D_2$.
\end{proof}

\section{Proof of Theorem \ref{t1.1}}\label{sec4}

In this section we use the \textit{a priori} estimates established in Section $\ref{sec3}$ to complete proof of Theorem \ref{t1.1}.
\begin{proof}[Proof of Theorem \ref{t1.1}.]
Let $(\rho_0, \mathbf{u}_0)$ be initial data as described in the theorem.
For $\varepsilon>0$, let $j_\varepsilon=j_\varepsilon(\mathbf{x})$ be the standard mollifier, define the approximate initial data $(\rho_0^\varepsilon, \mathbf{u}_0^\varepsilon)$:
\begin{align*}
\rho_0^\varepsilon&=[J_\varepsilon\ast(\rho_0\mathbf{1}_\Omega)]\mathbf{1}_\Omega+\varepsilon,
\end{align*}
and $\mathbf{u}_0^\varepsilon$ is the unique smooth solution to the following elliptic equation
\begin{equation*}
\begin{cases}
\Delta \mathbf{u}_0^\varepsilon = \Delta(J_\varepsilon\ast\mathbf{u}_0), & \mathbf{x}\in \Omega, \\
\mathbf{u}_0^\varepsilon \cdot \mathbf{n} = 0, \,\ \curl \mathbf{u}_0^\varepsilon = 0, & \mathbf{x}\in\partial \Omega.
\end{cases}
\end{equation*}
Then we have
\begin{align*}
\rho_0^\varepsilon\in H^2,\ \
\inf_{\mathbf{x}\in\Omega}\{\rho_0^\varepsilon(\mathbf{x})\}\geq\varepsilon, \ \ \mathbf{u}_0^\varepsilon\in  H^2_\omega.
\end{align*}
By Proposition $\ref{p3.1}$, it holds that, for $\varepsilon$ being suitably small,
\begin{align*}
0\leq\rho^\varepsilon(\mathbf{x},t)\leq\frac{7}{4}\hat{\rho}~\textit{a.e.}~\mathrm{on}~\Omega\times[0,T]
\end{align*}
provided that $\lambda$ satisfies \eqref{lam}. Thus, Lemma $\ref{l2.1}$ implies the global existence and uniqueness of strong solutions $(\rho^\varepsilon,\mathbf{u}^\varepsilon)$ to \eqref{a1} and \eqref{a3} with the smooth initial data $(\rho_0^\varepsilon,\mathbf{u}_0^\varepsilon)$.

Fix $\mathbf{x}\in\overline{\Omega}$ and let $B_R$ be a ball of radius $R$ centered at $\mathbf{x}$. Then, for $t\geq\tau>0$, it follows from Lemmas $\ref{E0}$, $\ref{l3.1}\text{--}\ref{l3.4}$, and Sobolev's inequality that
\begin{align}
\langle\mathbf{u}^\varepsilon(\cdot,t)\rangle^{\frac12}_{\overline{\Omega}}&\leq C\big(1+\|\nabla\mathbf{u}^\varepsilon\|_{L^4}\big)\notag\\
&\leq C\|\nabla\mathbf{u}^\varepsilon\|_{L^2}^{\frac12}
\big\|\sqrt{\rho^\varepsilon}\dot{\mathbf{u}}^\varepsilon\big\|_{L^2}
^{\frac12}+\frac{C}{2\mu+\lambda}
\big\|\sqrt{\rho^\varepsilon}\dot{\mathbf{u}}^\varepsilon\big\|_{L^2}^{\frac12}
\big\|P(\rho^\varepsilon)-\overline{P(\rho^\varepsilon)}\big\|_{L^2}^\frac{1}{2}\notag\\
&\quad+\frac{C}{2\mu+\lambda}
\big\|P(\rho^\varepsilon)-\overline{P(\rho^\varepsilon)}\big\|_{L^4}
+C\|\nabla \mathbf{u}^\varepsilon\|_{L^2}+C
\notag\\
&\leq C(\tau).\notag
\end{align}
Note that
\begin{align}
\left|\mathbf{u}^\varepsilon(\mathbf{x},t)-\frac{1}{|B_R\cap\Omega|}
\int_{B_R\cap\Omega}\mathbf{u}^\varepsilon(\mathbf{y},t)\mathrm{d}\mathbf{y}\right|
&=\left|\frac{1}{|B_R\cap\Omega|}
\int_{B_R\cap\Omega}\left(\mathbf{u}^\varepsilon(\mathbf{x},t)
-\mathbf{u}^\varepsilon(\mathbf{y},t)\right)\mathrm{d}\mathbf{y}\right|\notag\\
&\leq\frac{1}{|B_R\cap\Omega|}C(\tau)\int_{B_R\cap\Omega}
|\mathbf{x}-\mathbf{y}|^{\frac12}\mathrm{d}\mathbf{y}
\notag\\
&\leq C(\tau)R^{\frac12}.\notag
\end{align}
For $0<\tau\leq t_1<t_2<\infty$, we deduce that
\begin{align}
|\mathbf{u}^\varepsilon(\mathbf{x},t_2)-\mathbf{u}^\varepsilon(\mathbf{x},t_1)|
&\leq\frac{1}{|B_R\cap\Omega|}\int_{t_{1}}^{t_{2}}
\int_{B_R\cap\Omega}|\mathbf{u}_{t}^{\varepsilon}(\mathbf{y},t)
|\mathrm{d}\mathbf{y}\mathrm{d}t+C(\tau)R^{\frac12}\notag\\
&\leq CR^{-1}|t_{2}-t_{1}|^{\frac{1}{2}}\left(\int_{t_{1}}^{t_{2}}
\int\left|\mathbf{u}_{t}^{\varepsilon}(\mathbf{y},t)\right|^{2}
\mathrm{d}\mathbf{y}\mathrm{d}t\right)^{\frac{1}{2}}+C(\tau)R^{\frac12}\notag\\
&\leq CR^{-1}|t_{2}-t_{1}|^{\frac{1}{2}}\left(\int_{t_{1}}^{t_{2}}
\int\big(|\dot{\mathbf{u}}^{\varepsilon}|^{2}
+|\mathbf{u}^{\varepsilon}|^{2}|\nabla\mathbf{u}^{\varepsilon}|^{2}\big)
\mathrm{d}\mathbf{y}\mathrm{d}t\right)^{\frac{1}{2}}
+C(\tau)R^{\frac12}\notag\\
&\leq C(\tau)R^{-1}|t_{2}-t_{1}|^{\frac{1}{2}}+C(\tau)R^{\frac12},\notag
\end{align}
due to
\begin{align}
\int_{t_1}^{t_2}\int|\mathbf{u}^\varepsilon|^2
|\nabla\mathbf{u}^\varepsilon|^2\mathrm{d}\mathbf{x}\mathrm{d}t
&\leq C\sup_{t_1\leq t\leq t_2}\|\mathbf{u}^\varepsilon\|_{L^\infty}^2\int_{t_1}^{t_2}
\int|\nabla\mathbf{u}^\varepsilon|^2\mathrm{d}\mathbf{x}\mathrm{d}t\notag\\
&\leq C\sup_{t_1\leq t\leq t_2}\|\mathbf{u}^\varepsilon\|_{L^2}^{\frac{2}{3}}
\|\nabla\mathbf{u}^\varepsilon\|_{L^4}^{\frac{4}{3}}
\int_{t_1}^{t_2}\int|\nabla\mathbf{u}^\varepsilon|^2\mathrm{d}\mathbf{x}\mathrm{d}t\leq C(\tau).\notag
\end{align}
Choosing $R=|t_2-t_1|^{\frac13}$, one can see that
\begin{equation*}
|\mathbf{u}^\varepsilon(\mathbf{x},t_2)-\mathbf{u}^\varepsilon(\mathbf{x},t_1)|
\leq C(\tau)|t_{2}-t_{1}|^{\frac{1}{6}},~~\text{for}~~0<\tau\leq t_1<t_2<\infty,
\end{equation*}
which implies that $\{\mathbf{u}^\varepsilon\}$ is uniformly H\"older continuous away from $t=0$.

For any fixed $\tau$ and $T$ with $0<\tau<T<\infty$, it follows from Ascoli--Arzel\`{a} theorem that there is a subsequence $\varepsilon_k\rightarrow0$ satisfying
\begin{equation}\label{z4.1}
\mathbf{u}^{\varepsilon_k}\rightarrow \mathbf{u}~~\mathrm{uniformly}~\mathrm{on}~\mathrm{compact}~\mathrm{sets} ~\mathrm{in}~\Omega\times(0,\infty).
\end{equation}
Moreover, by the standard compactness arguments as those in \cite{Hoff05,EF01,PL98}, we can extract a further subsequence $\varepsilon_{k'}\rightarrow0$ satisfying
\begin{equation}\label{z4.2}
  \rho^{\varepsilon_{k'}}\rightarrow\rho~~\mathrm{strongly}~\mathrm{in}~L^p(\Omega),~~\mathrm{for}~
  \mathrm{any}~p\in[1,\infty)~\mathrm{and}~t\geq0.
\end{equation}
Therefore, passing to the limit of $\varepsilon_{k'}\rightarrow0$, in view of \eqref{z4.1} and \eqref{z4.2}, we deduce that the limit function $(\rho,\mathbf{u})$ is indeed a weak solution of the initial-boundary value problem \eqref{a1}--\eqref{a3} in the sense of Definition $\ref{d1.1}$ and satisfies \eqref{reg}.
\end{proof}

\section{Proof of Theorem \ref{t1.2}}\label{sec5}
This section is devoted to the incompressible limit of \eqref{a1}--\eqref{a3} as the bulk viscosity tends to infinity.

\begin{proof}[Proof of Theorem \ref{t1.2}.]
Let $\{(\rho^\lambda,{\bf u}^\lambda)\}$ be the family of solutions to the problem \eqref{a1}--\eqref{a3} from Theorem \ref{t1.1}. Applying \eqref{reg} and performing a similar argument as that in \eqref{z4.1}, then there is a subsequence $\{(\rho^{\lambda_{k}},{\bf u}^{\lambda_{k}})\}$ such that
\begin{align}
{\bf u}^{\lambda_{k}}&\rightarrow {\bf v}
~~\text{uniformly on compact sets  in}~\Omega\times(0,\infty),\notag\\
\rho^{\lambda_{k}}&\rightarrow \varrho\ \    \text{weakly in}\ \ L^p(\Omega),\ \ \text{for any}\ p\in[1,\infty)\ \text{and}\ {t\ge 0},\label{5.1}\\
\rho^{\lambda_{k}}&\rightarrow \varrho\ \  \text{weakly* in}\ \ L^\infty(\Omega),\ \ \text{for any}\ {t\ge 0},\notag\\
\divv{\bf u}^{\lambda_{k}}&\rightarrow 0~~\text{strongly  in}~L^2(\Omega\times(0,\infty)).\notag
\end{align}
Hence, we conclude that $\divv{\bf v}=0$ and $(\varrho,{\bf v})$ satisfies \eqref{1.13} and \eqref{1.14} for all $C^1$ test functions $(\phi,\boldsymbol\psi)$ just as in Definition \ref{d1.2}, with $\divv\boldsymbol\psi=0$ on $\Omega\times[0,\infty)$. Moreover, $(\varrho,{\bf v})$ satisfies
\begin{equation}\label{5.2}
 0\leq\varrho({\bf x},t)\leq 2 \hat\rho\ \ \text{a.e. on} \
\Omega\times[0,\infty),
\end{equation}
\begin{equation}\label{5.3}
\sup\limits_{t\ge 0}\big(\|\sqrt{\varrho}{\bf v}\|_{L^2}^2+\|{\bf v}\|_{H^1}^2+\sigma\|\nabla^2{\bf v}\|_{L^2}^2\big)+\int_0^\infty\big(\mu\|{\bf v}\|_{H^1}^2+\|\nabla^2{\bf v}\|_{L^2}^2
\big)\mathrm{d}\tau\le C(C_0,M).
\end{equation}

It remains to show \eqref{1.11} and \eqref{1.12}. According to the mass equation $\eqref{a1}_1$, it holds that
\begin{equation*}
\partial_t(\rho^{\epsilon,\lambda}-\rho_0^\epsilon)^2+{\bf u}^{\epsilon,\lambda}\cdot\nabla(\rho^{\epsilon,\lambda}-\rho_0^\epsilon)^2
+2\rho^{\epsilon,\lambda}(\rho^{\epsilon,\lambda}-\rho_0^\epsilon)\divv{\bf u}^{\epsilon,\lambda}=0.
\end{equation*}
Integrating the above equality over $\Omega\times(0,t)$, we obtain that
\begin{align*}
\big\|(\rho^{\epsilon,\lambda}-\rho_0^\epsilon)(\cdot,t)\big\|_{L^2}^2
& =\int_0^t\int(\rho^{\epsilon,\lambda}-\rho_0^\epsilon)^2\divv{\bf u}^{\epsilon,\lambda}\mathrm{d}{\bf x}\mathrm{d}\tau-2\int_0^t\int\rho^{\epsilon,\lambda}(\rho^{\epsilon,\lambda}
-\rho_0^\epsilon)\divv{\bf u}^{\epsilon,\lambda}\mathrm{d}{\bf x}\mathrm{d}\tau
\\ & \le C\bigg(\int_0^t\big\|\rho^{\epsilon,\lambda}-\rho_0^\epsilon\big\|_{L^4}^4
\mathrm{d}\tau\bigg)^\frac{1}{2}\bigg(\int_0^t\big\|\divv{\bf u}^{\epsilon,\lambda}\big\|_{L^2}^2\mathrm{d}\tau\bigg)^\frac{1}{2}
\\ & \quad +C\sup\limits_{t\ge 0}\big\|\rho^{\epsilon,\lambda}(\cdot,t)\big\|_{L^\infty} \bigg(\int_0^t\big\|\rho^{\epsilon,\lambda}-\rho_0^\epsilon\big\|_{L^2}^2
\mathrm{d}\tau\bigg)^\frac{1}{2}\bigg(\int_0^t\big\|\divv{\bf u}^{\epsilon,\lambda}\big\|_{L^2}^2\mathrm{d}\tau\bigg)^\frac{1}{2}
\\ & \le C(t)\lambda^{-\frac{1}{2}},
\end{align*}
which together with \eqref{z4.2} yields that
\begin{equation*}
\|(\rho^\lambda-\rho_0)(\cdot,t)\|_{L^2}^2
=\lim\limits_{\epsilon_k\rightarrow0}
\big\|(\rho^{\epsilon_k,\lambda}-\rho_0^{\epsilon_k})(\cdot,t)\big\|_{L^2}^2
\le C(t)\lambda^{-\frac{1}{2}}.
\end{equation*}
Thus, one has that
\begin{equation}\label{5.4}
\lim\limits_{\lambda\rightarrow\infty}\|(\rho^\lambda-\rho_0)(\cdot,t)\|_{L^2}
=0,\ \ \text{for any}\ t\ge 0.
\end{equation}

Next, using the mollifier $j_\epsilon$ as test functions in \eqref{1.13}, one infers from \eqref{5.3} that, for any compact set $K\subset\Omega$,
\begin{equation}\nonumber\partial_t[\varrho]_\epsilon+{\bf v}\cdot\nabla [\varrho]_\epsilon=\divv\left([\varrho]_\epsilon {\bf v}\right)-\divv[\rho{\bf v}]_\epsilon~~ \text{a.e. on}\ K\times(0,\infty),
\end{equation}
and furthermore,
\begin{equation*}
\partial_t\left([\varrho]_\epsilon-\rho_0\right)^2
+{\bf v}\cdot\nabla \left([\varrho]_\epsilon-\rho_0\right)^2=2\left([\varrho]_\epsilon-\rho_0\right)\big(\divv\left([\varrho-\rho_0]_\epsilon {\bf v}\right)-\divv\left[(\varrho-\rho_0){\bf v}\right]_\epsilon\big)~~ \text{a.e. on}~  K\times(0,\infty).
\end{equation*}
Integrating the above equality over $K\times(0,t)$, we have that
\begin{align}\label{5.5}
&\big|\|([\varrho]_{\epsilon}-\rho_0)(\cdot,t)\|_{L^2(K)}^2- \|[\rho_0]_{\epsilon}-\rho_0\|_{L^2(K)}^2\big|\notag\\
 & \le C\sup\limits_{t\ge 0}\|[\varrho]_\epsilon-\rho_0\|_{L^\infty}\int_0^t
 \|\divv\left([\varrho-\rho_0]_\epsilon {\bf v}\right)-\divv\left[(\varrho-\rho_0){\bf v}\right]_\epsilon\|_{L^1(K)}\mathrm{d}\tau
\notag \\
& \le C\int_0^t\|\divv\left([\varrho-\rho_0]_\epsilon {\bf v}\right)-\divv\left[(\varrho-\rho_0){\bf v}\right]_\epsilon\|_{L^1(K)}\mathrm{d}\tau.
\end{align}
According to Lemma \ref{lcom}, it holds that
\begin{equation*}
\|\divv\left([\varrho-\rho_0]_\epsilon {\bf v}\right)-\divv\left[(\varrho-\rho_0){\bf v}\right]_\epsilon\|_{L^1(K)}\le C(K)\|\varrho-\rho_0\|_{L^2(\Omega)}\|{\bf v}\|_{W^{1,2}(\Omega)}\in L^1(0,T),\ \ \text{for any}\ T>0.
\end{equation*}
This together with Lebesgue's dominated convergence theorem and Lemma \ref{lcom} leads to
\begin{align}\label{5.6}
&\lim\limits_{\epsilon\rightarrow0}\int_0^t
\|\divv\left([\varrho-\rho_0]_\epsilon {\bf v}\right)-\divv\left[(\varrho-\rho_0){\bf v}\right]_\epsilon\|_{L^1(K)}\mathrm{d}\tau \notag \\
& =\int_0^t\lim\limits_{\epsilon\rightarrow0}\|\divv\left([\varrho-\rho_0]_\epsilon {\bf v}\right)-\divv\left[(\varrho-\rho_0){\bf v}\right]_\epsilon\|_{L^1(K)}\mathrm{d}\tau=0.
\end{align}
Substituting \eqref{5.6} into \eqref{5.5}, we deduce that
\begin{align}\label{5.7}
\|(\varrho-\rho_0)(\cdot,t)\|_{L^2(K)}^2= \lim\limits_{\epsilon\rightarrow0} \big|\| ([\varrho]_{\epsilon}-\rho_0)(\cdot,t)\|_{L^2(K)}^2- \|[\rho_0]_{\epsilon}-\rho_0\|_{L^2(K)}^2\big|=0,
\end{align}
which implies \eqref{1.12}.

Finally, combining \eqref{5.4} and \eqref{5.7}, it follows that
\begin{align*}
\lim\limits_{\lambda\rightarrow\infty}\|(\rho^\lambda-\varrho)(\cdot,t)\|_{L^2(K)}
=0,\ \ \text{for any compact set} \ K \subset \Omega \text{ and any} \ t\ge 0,
\end{align*}
which along with \eqref{5.1} yields \eqref{1.11}. Consequently, $(\varrho,{\bf v})$ is a global weak solution to the inhomogeneous incompressible Navier--Stokes equations \eqref{a5} in the sense of Definition \ref{d1.2}.
\end{proof}

\section*{Conflict of interests}
The authors declare that they have no conflict of interests.

\section*{Data availability}
No data was used for the research described in the article.

\end{document}